\newcommand{\R}{\mathbb{R}}
\newcommand{\Z}{\mathbb{Z}}
\newcommand{\atilde}{\widetilde{\alpha}}
\newcommand{\DG}{\mathcal{D}_{\Gamma}}
\newcommand{\DGe}{\mathcal{D}^e_{\Gamma}}
\newcommand{\EDG}{\widetilde{\D}_{\Gamma}}
\newcommand{\D}{\mathcal{D}}
\newcommand{\W}{\mathcal{W}}
\newcommand{\T}{\mathcal{\Tree}}
\newcommand{\Se}{\Stop^e}
\newcommand{\HD}{\mathsf{HD}}
\newcommand{\LD}{\mathsf{LD}}
\newcommand{\Stop}{\mathsf{Stop}}
\newcommand{\Tree}{\mathsf{Tree}}
\newcommand{\one}{\mathds{1}}
\DeclareMathOperator{\supp}{supp}
\newcommand\restr[2]{{% we make the whole thing an ordinary symbol
		\left.\kern-\nulldelimiterspace % automatically resize the bar with \right
		#1 % the function
		%\vphantom{\big|} % pretend it's a little taller at normal size
		\right|_{#2} % this is the delimiter
}}
\newcommand{\HG}{\Hn{\Gamma}}
\newcommand{\Hn}[1]{\restr{\H^n}{#1}}
\renewcommand{\H}{\mathscr{H}}
\newcommand{\sg}{\sigma}
\DeclareMathOperator{\diam}{diam}
\DeclareMathOperator{\dist}{dist}
\DeclareMathOperator{\lip}{\text{Lip}}
\newtheorem{theorem}{Theorem}[section]
\newtheorem{lemma}[theorem]{Lemma}
\newtheorem{cor}[theorem]{Corollary}
\theoremstyle{definition}
\newtheorem{remark}[theorem]{Remark}
\numberwithin{equation}{section}
\newcounter{AbsConstants}
\title{Necessary condition for rectifiability involving Wasserstein distance $W_2$}
\author{Damian D\k{a}browski}
\address{Damian D\k{a}browski\newline\indent Departament de Matem\`atiques, Universitat Aut\`onoma de Barcelona; Barcelona Graduate School of Mathematics (BGSMath)\newline\indent Edifici C Facultat de Ci\`encies, 08193 Bellaterra (Barcelona, Catalonia) }
\email{ddabrowski@mat.uab.cat}
\begin{document}
	\begin{abstract}
	A Radon measure $\mu$ is $n$-rectifiable if $\mu\ll\H^n$ and $\mu$-almost all of $\supp\mu$ can be covered by Lipschitz images of $\R^n$. In this paper we give a necessary condition for rectifiability in terms of the so-called $\alpha_2$ numbers -- coefficients quantifying flatness using Wasserstein distance $W_2$. In a recent article we showed that the same condition is also sufficient for rectifiability, and so we get a new characterization of rectifiable measures.
\end{abstract}	
	\maketitle
	
	\section{Introduction}
	Let $1\le n\le d$ be integers. We say that a Radon measure $\mu$ on $\R^d$ is $n$-rectifiable if there exist countably many Lipschitz maps $f_i:\R^n\rightarrow\R^d$ such that 
	\begin{equation}\label{eq:measure concentrated on images}
	\mu(\R^d\setminus\bigcup_i f_i(\R^n))=0,
	\end{equation}
	and moreover $\mu$ is absolutely continuous with respect to $n$-dimensional Hausdorff measure $\H^n$. A set $E\subset \R^d$ is $n$-rectifiable if the measure $\Hn{E}$ is $n$-rectifiable. We will often omit $n$ and just write ``rectifiable''.
	
	The study of rectifiable sets and measures lies at the very heart of geometric measure theory. We refer the reader to \cite[Chapters 15--18]{mattila1999geometry} for some classical characterizations of rectifiability involving densities, tangent measures, and projections. The aim of this paper is to prove a necessary condition for rectifiability involving the so-called $\alpha_2$ coefficients. 
	
	\subsection{\texorpdfstring{$\alpha_p$ numbers}{alpha p numbers}} Coefficients $\alpha_p$ were introduced by Tolsa in \cite{tolsa2012mass}. In order to define them, we recall the definition of Wasserstein distance.
	
	Let $1\le p <\infty$, and let $\mu, \nu$ be two probability Borel measures on $\R^d$ satisfying $\int |x|^p\ d\mu<\infty,\ \int |x|^p\ d\nu<\infty$. The Wasserstein distance $W_p$ between $\mu$ and $\nu$ is defined as 
	\begin{equation*}
	W_p(\mu,\nu)=\left(\inf_{\pi}\int_{\R^d\times\R^d} |x-y|^p\ d\pi(x,y)\right)^{1/p},
	\end{equation*}
	where the infimum is taken over all transport plans between $\mu$ and $\nu$, i.e. Borel probability measures $\pi$ on $\R^d\times\R^d$ satisfying $\pi(A\times\R^d)=\mu(A)$ and $\pi(\R^d\times A)=\nu(A)$ for all measurable $A\subset\R^d$. The same definition makes sense if instead of probability measures we consider $\mu,\ \nu,\ \pi$ of the same total mass.
	
	Wasserstein distances are a way to measure the cost of {transporting} one measure to another, and they are of fundamental importance to the theory of optimal transport. For more information see for example \cite[Chapter 7]{villani2003topics} or \cite[Chapter 6]{villani2008optimal}.
	
	The idea behind $\alpha_p$ numbers is to quantify how far a given measure is from being a flat measure, that is, from being of the form $c\Hn{L}$ for some constant $c>0$ and some $n$-plane $L$. In order to measure it locally (say, in a ball $B$), we introduce the following auxiliary function. 
	
	Let $\varphi:\R^d\rightarrow [0,1]$ be a radial Lipschitz function satisfying $\varphi \equiv 1$ in $B(0,2)$, $\supp\varphi \subset B(0,3)$, and for all $x\in B(0,3)$
	\begin{gather*}
	c^{-1}\dist(x,\partial B(0,3))^2\le \varphi(x)\le c\dist(x,\partial B(0,3))^2,\\
	|\nabla\varphi(x)|\le c \dist(x,\partial B(0,3)),
	\end{gather*}
	for some constant $c>0$. For example, one could take $\varphi(x) = \phi(|x|)$ where $\phi:[0,\infty)\to [0,1]$ is such that $\phi(r)=1$ for $0\le r \le 2$, $\phi(r)=0$ for $r \ge 3$, and $\phi(r)=(3-r)^2$ for $2<r<3$. Given a ball $B = B(x,r)\subset\R^d$ we set
	\begin{equation}\label{eq:def of varphi B}
	\varphi_{{B}}(y) = \varphi\left(\frac{y-x}{r}\right).
	\end{equation}
	$\varphi_{{B}}$ can be seen as a regularized characteristic function of $B$. 
	
	For $1\le p <\infty$, a Radon measure $\mu$ on $\R^d$, a ball $B=B(x,r)\subset\R^d$ with $\mu(B)>0$, and an $n$-plane $L$ intersecting $B$, we define
	\begin{equation}\label{eq:definition alphas}
	\alpha_{\mu,p,L}(B) = \frac{1}{r\,\mu(B)^{1/p}}W_p(\varphi_B\mu,a_{B,L}\varphi_B\restr{\H^n}{L}),
	\end{equation}
	where $a_{{B,L}} = (\int \varphi_B\ d\mu)/(\int \varphi_B\ d\restr{\H^n}{L})$. We will usually omit the subscripts and just write $a$. We define also
	\begin{equation*}
	\alpha_{\mu,p}(B) = \inf_L \alpha_{\mu,p,L}(B),
	\end{equation*}
	where the infimum is taken over all $n$-planes $L$ intersecting $B$. For a ball $B=B(x,r)$ we will sometimes write $\alpha_{\mu,p}(x,r)$ instead of $\alpha_{\mu,p}(B)$, and we will do the same with all the other coefficients introduced below.
	
	Coefficients $\alpha_p$ were first defined in \cite{tolsa2012mass} with the aim of characterizing \emph{uniformly} rectifiable measures. The notion of uniform rectifiability, which can be seen as a more quantitative counterpart of rectifiability, was introduced by David and Semmes in \cite{david1991singular,david1993analysis}. We say that a measure $\mu$ is uniformly $n$-rectifiable if:
	\begin{itemize}
		\item[(i)] it is $n$-AD-regular, i.e. there exists a constant $C$ such that for all $x\in\supp\mu$ and $0<r<\diam(\supp\mu)$ we have $C^{-1}r^n\le\mu(B(x,r))\le Cr^n$,
		\item[(ii)] it has \emph{big pieces of Lipschitz images}, i.e. there exist constants $\theta, L>0$ such that for any $x\in\supp\mu$ and $0<r<\diam(\supp\mu)$ we may find an $L$-Lipschitz mapping $g$ from the $n$-dimensional ball $B^n(0,r)\subset\R^n$ into $\R^d$ satisfying
		\begin{equation*}
		\mu\big(B(x,r)\cap g(B^n(0,r))\big)\ge \theta r^n.
		\end{equation*}
	\end{itemize}
	A trivial example of a uniformly rectifiable measure is the surface measure on a Lipschitz graph.
	
	 In \cite{tolsa2012mass} Tolsa showed the following characterization of uniformly rectifiable measures:
	\begin{theorem}[{\cite[Theorem 1.2]{tolsa2012mass}}]\label{thm:characterization UR}
		Let $1\le p\le 2$. Suppose $\mu$ is an $n$-AD-regular measure on $\R^d$. Then, $\mu$ is uniformly rectifiable if and only if there exists  $C>0$ such that for any ball $B=B(z,R)$ centered at $\supp\mu$ we have
		\begin{equation*}
		\int_0^R\int_B \alpha_{\mu,p}(x,r)^2\ d\mu(x) \frac{dr}{r}\le CR^n.
		\end{equation*}	
	\end{theorem}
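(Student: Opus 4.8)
The plan is to establish the two implications separately; the implication ``Carleson condition $\Rightarrow$ uniformly rectifiable'' is the easy one. If $\pi$ is any transport plan between $\varphi_B\mu$ and $a\,\varphi_B\,\H^n|_L$, then for every $(y,z)$ in its support we have $\dist(y,L)\le|y-z|$ since $z\in L$; because $\varphi\equiv 1$ on $B(0,2)$, $\varphi_B\equiv 1$ on $B$, and integrating gives $\int_B\dist(y,L)^p\,d\mu(y)\le\int\dist(y,L)^p\varphi_B\,d\mu=\int\dist(y,L)^p\,d\pi(y,z)\le W_p(\varphi_B\mu,a\,\varphi_B\,\H^n|_L)^p$. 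Dividing by $r^p\mu(B)$ this reads $\beta_{\mu,p}(B)\le\alpha_{\mu,p}(B)$, where $\beta_{\mu,p}(B)=\inf_L\big(\mu(B)^{-1}\int_B(\dist(y,L)/r)^p\,d\mu\big)^{1/p}$ is the classical $L^p$ $\beta$-number. Hence the hypothesised Carleson estimate for $\alpha_{\mu,p}^2$ transfers to the $\beta$-numbers, and one invokes the David--Semmes characterisation of uniform rectifiability by the $\beta_{\mu,p}$-Carleson condition, valid for $n$-AD-regular measures and $1\le p\le 2$. This is the only place $p\le 2$ intervenes on this side.

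For the converse --- the substantial direction --- I would run the corona decomposition of David--Semmes. Uniform rectifiability of $\mu$ produces a partition of the dyadic cubes of $\supp\mu$ into stopping-time regions (trees) $\{R\}$ enjoying the Carleson packing $\sum_R\mu(Q(R))\lesssim\mu(Q_0)$, such that on each tree $R$ the measure $\mu$ is approximated at every scale and location by a Lipschitz graph $\Gamma_R=\graph(A_R)$ over an $n$-plane $L_R$, with $\|\nabla A_R\|_\infty\le\varepsilon$ for $\varepsilon$ as small as we wish. Since it suffices to bound the discretised sum $\sum_Q\alpha_{\mu,p}(B_Q)^2\mu(Q)$ over the cubes $Q$ (with $B_Q$ the ball naturally attached to $Q$), and the contribution of the tree tops is absorbed by the packing condition, one is left to estimate $\alpha_{\mu,p}(B_Q)$ for $Q$ inside a fixed tree $R$. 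For this I would split the transport in two stages: first move $\varphi_{B_Q}\mu$ onto a multiple of $\varphi_{B_Q}\,\H^n|_{\Gamma_R}$, a cost whose square sums along $R$ by the approximation built into the corona construction (refining it if necessary so that $\mu$ is Wasserstein-close to $\H^n|_{\Gamma_R}$ at the scales of $R$); then transport the graph measure $\H^n|_{\Gamma_R}$ onto a flat measure $a\,\H^n|_L$, with $L$ the graph of the best affine $L^p$-approximation of $A_R$ near $Q$. The second stage is the model computation: here one should not pass through the affine plane by a blunt triangle inequality (a naive ``vertical part plus density part'' split loses a logarithm, as one sees already on simple oscillating graphs, where it diverges along a sequence of graphs while the true $\alpha_2$ stays bounded), but rather transport $\H^n|_{\Gamma_R}$ directly onto $\H^n|_L$ by an almost-vertical, almost-isometric map that corrects the Jacobian in one stroke; the gain is that on a graph the density that the vertical projection produces is controlled by the variation of the normal, equivalently by the flatness of $\Gamma_R$, so $\alpha_{\mu,p}(B_Q)$ ends up dominated by a Dorronsoro-type coefficient of $A_R$. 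Summing over $Q\in R$ and applying Dorronsoro's theorem gives a bound of size $\|\nabla A_R\|_{L^2}^2\lesssim\varepsilon^2\ell(Q(R))^n\sim\varepsilon^2\mu(Q(R))$ (this is exactly the $L^2$-Carleson estimate for Lipschitz graphs that fails for $p>2$, which is why $1\le p\le 2$ is assumed), and summing over $R$ via the packing condition closes the argument.

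The main obstacle is precisely this second stage. In contrast with the $\beta$-numbers, the $\alpha_p$-numbers also register how evenly $\mu$ is distributed, so the Jacobian mismatch between $\H^n|_{\Gamma_R}$ and the flat model has to be handled \emph{without} destroying square-summability, which forces one to transport the ``vertical fuzz'' and the ``density oscillation'' of the graph simultaneously rather than one after the other. Beyond this, some care is needed with the transition cubes separating distinct trees (controlled by the packing condition together with standard stability lemmas for $\alpha_p$ under small perturbations of the ball and of $\varphi$, available because $\varphi_B$ is a regularised rather than a sharp cut-off) and with the normalising constants $a_{B,L}$, which one must compare with the densities occurring along the trees. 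Finally, the whole range $1\le p\le 2$ reduces to the endpoints: from the monotonicity $\alpha_{\mu,p}\lesssim\alpha_{\mu,q}$ for $p\le q$ (Jensen's inequality for Wasserstein distances, together with $n$-AD-regularity to compare $\int\varphi_B\,d\mu$ with $\mu(B)$) it is enough to prove necessity for $p=2$ and sufficiency for $p=1$.
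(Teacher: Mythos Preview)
The paper does not prove this theorem: it is quoted verbatim from \cite{tolsa2012mass} and used only as a black box (specifically, it is invoked twice in Section~\ref{sec:pass from approximating to mu} for the surface measure $\sigma$ on the Lipschitz graph $\Gamma$). So there is no proof in the paper to compare your proposal against.

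That said, your outline is broadly consonant with the argument in \cite{tolsa2012mass}. The ``easy'' direction via $\beta_{\mu,p}\le\alpha_{\mu,p}$ and David--Semmes is exactly what is done there (and the present paper records the same inequality in \lemref{lem:alpha2 controls alpha and beta2}). For the substantive direction, Tolsa does use corona decomposition, reduction to Lipschitz graphs, and Dorronsoro's theorem, as you sketch. The main difference between your outline and the actual proof is in the ``model computation'' on a Lipschitz graph: rather than building an almost-vertical almost-isometric transport map in one stroke, Tolsa works via a dyadic martingale decomposition on the graph and proves a \emph{flat} key lemma (the one cited here as \lemref{lem:remark 3.14 from Tolsa12}) estimating $W_2$ on $\R^n$ by $\sum_P\|\Delta_P g\|_2^2\,\ell(P)\,\ell(Q)$; the density oscillation is then controlled via Haar coefficients, and the geometric (vertical) part is handled separately by projecting onto the best approximating plane. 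Your remark that a naive ``vertical plus density'' split loses a logarithm is correct in spirit, but the remedy in \cite{tolsa2012mass} is the weighted-$\ell^2$ martingale estimate (and the smooth cutoff $\varphi_B$, which is what makes \lemref{lem:W_2 est Tolsa} work), not a single clever transport map. In other words, your proposal identifies the right obstacle but is vague precisely at the point where the real technical content lies; making your ``almost-isometric map'' precise would essentially force you to rediscover the key lemma of \cite{tolsa2012mass}.
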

 
	In this paper we prove a necessary condition for rectifiability of measures which is of similar spirit.
	\begin{theorem}\label{thm:necessary condition}
		Let $\mu$ be an $n$-rectifiable measure on $\R^d$. Then for $\mu$-a.e. $x\in\R^d$
		\begin{equation}\label{eq:alpha2 square function}
		\int_0^{1}\alpha_{\mu,2}(x,r)^2\ \frac{dr}{r}<\infty.
		\end{equation}
	\end{theorem}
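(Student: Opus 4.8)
\emph{Proof sketch.} The plan is to reduce to a model situation on a single Lipschitz graph and there to split $\alpha_{\mu,2}$ into a ``flatness'' part, controlled by Dorronsoro's inequality, and a ``density'' part, controlled by an $L^2$ martingale estimate.

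\emph{Reductions.} Using the structure theory of rectifiable measures one reduces to the case $\mu = g\,\Hn{\Gamma}$, where $\Gamma = \graph(A)$ is a single Lipschitz graph of a function $A\colon\R^n\to\R^{d-n}$ whose Lipschitz constant is as small as we wish, $\supp\mu$ is compact, and $g$ is bounded above and below by positive constants. Indeed, up to a $\mu$-null set $\supp\mu$ is covered by countably many Lipschitz graphs, which may be chopped into pieces of small Lipschitz constant, and $\mu\ll\H^n$ has positive finite density and an approximate tangent plane $\mu$-a.e. The passage from $\mu$ to a single such piece is the delicate point: since $W_2$ is not additive one cannot simply discard the remaining part of $\mu$, and one must exploit that at a typical point this part not only has vanishing density but, being supported on nearby graphs, lies close to the tangent plane, so that the error it produces is absorbed into the flatness part below.

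\emph{Flatness part.} Fix $x$ with $g(x)\in(0,\infty)$ at which $A$ is differentiable; after a rotation we may take the affine tangent plane $V = V_x$ through $x$ to be horizontal, and we write $B = B(x,r)$ and $\pi_V$ for the orthogonal projection onto $V$. Since $V$ meets $B$, bounding $\alpha_{\mu,2}(x,r)$ by $\alpha_{\mu,2,V}(x,r)$ and using the triangle inequality,
\begin{equation*}
W_2\big(\varphi_B\mu,\,a\varphi_B\Hn{V}\big)\ \le\ W_2\big(\varphi_B\mu,\,\pi_{V\,\#}(\varphi_B\mu)\big) + W_2\big(\pi_{V\,\#}(\varphi_B\mu),\,a\varphi_B\Hn{V}\big).
\end{equation*}
As $|y-\pi_V(y)| = \dist(y,V)$, the first term is at most $\big(\int\varphi_B(y)\,\dist(y,V)^2\,d\mu(y)\big)^{1/2}$, and because $g$ is bounded and $V$ is (close to) the best plane at scale $r$ — the discrepancy between $V$ and the scale-$r$ optimal plane being itself majorized by the quantity below — we get, after dividing by $r\,\mu(B)^{1/2}$,
\begin{equation*}
\frac{1}{r^2\mu(B)}\int\varphi_B(y)\,\dist(y,V)^2\,d\mu(y)\ \lesssim\ \inf_{P}\frac{1}{r^{n+2}}\int_{B'(x,r)}\big|A(\bar y)-P(\bar y)\big|^2\,d\bar y\ =:\ \Omega_A(x,r)^2,
\end{equation*}
with $B'(x,r)\subset\R^n$ a base ball comparable to $B$ and $P$ ranging over affine maps. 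Dorronsoro's inequality, applied to $A\in\dot W^{1,2}_{\mathrm{loc}}$, gives $\int_{\R^n}\int_0^1\Omega_A(\cdot,r)^2\,\tfrac{dr}{r}\lesssim\|\nabla A\|_{L^2}^2<\infty$, so $\int_0^1\Omega_A(x,r)^2\,\tfrac{dr}{r}<\infty$ for a.e.\ $x$, and this term contributes a finite amount $\mu$-a.e.

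\emph{Density part.} The measure $\pi_{V\,\#}(\varphi_B\mu)$ lives on $V\cong\R^n$; near $x$ it is a small perturbation of the flat measure $g(x)\Hn{V}$, hence absolutely continuous, and up to errors already absorbed into $\Omega_A$ we may write it as $\varphi_B h\,d\bar y$ for a fixed bounded density $h$ (the Radon--Nikodym derivative of $\pi_{V\,\#}\mu$), with $h(\bar x) = g(x)>0$. Transporting $\varphi_B h\,d\bar y$ to $a\varphi_B\,d\bar y$ by successively equalizing dyadic averages of $h$ yields a competitor plan for which
\begin{equation*}
W_2\big(\varphi_B h\,d\bar y,\,a\varphi_B\,d\bar y\big)^2\ \lesssim\ \frac{1}{g(x)}\sum_{\substack{Q\subseteq B\\ Q\ \mathrm{dyadic}}}\big|h_Q-h_{\widehat Q}\big|^2\,\ell(Q)^{n+2},
\end{equation*}
where $h_Q$ is the average of $h$ over $Q$ and $\widehat Q$ its dyadic parent. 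Dividing by $r^2\mu(B)\approx g(x)r^{n+2}$ and integrating in $r$,
\begin{equation*}
\int_0^1\frac{W_2\big(\varphi_B h\,d\bar y,\,a\varphi_B\,d\bar y\big)^2}{r^2\mu(B)}\,\frac{dr}{r}\ \lesssim\ \frac{1}{g(x)^2}\sum_{Q\ni\bar x}\big|h_Q-h_{\widehat Q}\big|^2,
\end{equation*}
the dyadic square function of $h$ at $\bar x$. Since $h\in L^2_{\mathrm{loc}}$ the martingale differences $h_Q-h_{\widehat Q}$ are $L^2$-orthogonal, so this square function is finite a.e., and the density term too contributes a finite amount $\mu$-a.e. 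Combining the two parts, $\alpha_{\mu,2}(x,r)^2\lesssim\Omega_A(x,r)^2 + g(x)^{-2}\!\sum_{Q\subseteq B}|h_Q-h_{\widehat Q}|^2\,\ell(Q)^{n+2}\,r^{-n-2}$, and integrating over $r\in(0,1)$ gives the theorem for $\mu$-a.e.\ $x$.

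The step I expect to be the main obstacle is the reduction to a single graph: the non-linearity of $W_2$ forbids a crude discarding of the off-graph mass, and one must show that this mass — geometrically close to the tangent plane at typical points — affects only a summable multiple of the flatness term. Verifying the two displayed $W_2$-estimates and controlling the behaviour of $\varphi_B$ near $\partial B$ is routine but somewhat technical.
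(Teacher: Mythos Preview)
Your on-graph analysis is essentially right and matches the paper's core engine: once the measure is of the form $g\,\sigma$ on a Lipschitz graph, the flatness part is controlled by the $\alpha_{\sigma,2}$ square function (equivalently, Dorronsoro), and the density part by a dyadic martingale estimate for $g$ (this is the paper's Lemmas~\ref{lem:estimate alpha tilde from GAFA}--\ref{lem:estimate alpha tilde unbdd density}, which rest on \cite[Lemma~3.13]{tolsa2012mass}). The genuine gap is exactly where you flag it: the reduction to a single graph. Your heuristic that the off-graph piece ``lies close to the tangent plane'' is not true in general---the other Lipschitz graphs carrying $\mu$ can be transversal to $\Gamma$ near $x$---and the crude bound one gets by simply transporting $\varphi_B\mu_2$ onto $L$ contributes an error of size $\mu_2(3B)/\mu(B)$ to $\alpha_{\mu,2}(x,r)^2$. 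For this to be integrable in $\tfrac{dr}{r}$ you would need $\int_0^1 r^{-n}\mu_2(B(x,r))\,\tfrac{dr}{r}<\infty$, which is strictly stronger than $\Theta^n(\mu_2,x)=0$ and is not a known consequence of rectifiability. So as written the reduction does not go through.

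The paper avoids this reduction entirely. It keeps the full measure $\mu$ and instead builds, for each $\Gamma$-cube $Q$, an approximating measure $\nu_Q\ll\Hn{\Gamma}$ by a scale-dependent projection: the off-$\Gamma$ mass is pushed onto $\Gamma$ Whitney-cube by Whitney-cube, using only those Whitney cubes of sidelength $\lesssim\ell(Q)$ (Section~\ref{sec:approximating measures nuQ}). The cost of replacing $\mu$ by $\nu_Q$ at scale $\ell(Q)$ is then $\ell(Q)^{-n-2}\sum_{P}\mu(P)\ell(P)^2$ over Whitney cubes $P\subset\lambda\widetilde{B}_Q$ (Lemma~\ref{lem:estimate alpha mu with alpha sigma}), and summing this over all good $Q$ telescopes to $\lesssim\mu(\lambda\widetilde{B}_R)$ by swapping the order of summation---this is where the missing summability comes from. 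A stopping-time tree (Lemma~\ref{lem:T_lemma_general}) provides the density bounds you assumed by fiat, at the price of discarding a set of measure $<\varepsilon$. In short, the Whitney/$\nu_Q$ machinery is not cosmetic: it is precisely the device that replaces your unproven reduction.
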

	In \cite[Theorem 1.3]{dabrowski2019sufficient} we show that \eqref{eq:alpha2 square function} is also a sufficient condition for rectifiability (we use a slightly different version of $\alpha_2$, but it does not matter, see \remref{rem:different alphas}). Putting the two results together, we get the following characterization.
	\begin{cor}\label{thm:thm equivalence}
		Let $\mu$ be a Radon measure on $\R^d$. Then $\mu$ is $n$-rectifiable if and only if for $\mu$-a.e. $x\in\R^d$ we have
		\begin{equation*}
		\int_0^{1}\alpha_{\mu,2}(x,r)^2\ \frac{dr}{r}<\infty.
		\end{equation*}
	\end{cor}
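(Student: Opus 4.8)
We prove Theorem~\ref{thm:necessary condition}; Corollary~\ref{thm:thm equivalence} then follows at once by combining it with \cite[Theorem~1.3]{dabrowski2019sufficient}. The plan is to reduce to the model case of a Lipschitz graph carrying a density comparable to a constant, to split $\amu(x,r)$ into a geometric part and a density part, and to bound the two square functions separately --- the geometric one by Dorronsoro's theorem, the density one by a Wasserstein-to-$\dot H^{-1}$ estimate followed by a Littlewood--Paley computation. For the reduction: by the structure theory of rectifiable measures, $\mu$-a.e.\ $x$ lies on a Lipschitz graph $\Gamma=\graph_{L_0}(A)$ over an $n$-plane $L_0$, with $A$ of arbitrarily small Lipschitz norm, with $0<\Theta^{n}(\mu,x)<\infty$, with $x$ a point of differentiability of $A$, and with $x$ a Lebesgue point of $g=\tfrac{d\mu}{d\,\Hn{\Gamma}}$. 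Since for every such fixed $x$ and every $r_0>0$ the two measures in the definition of $\amu(x,r)$ have equal mass and are supported in $B(x,3r)$, so that $\amu(x,r)^{2}\le C\,\mu(B(x,3r))/\mu(B(x,r))$ and hence $\int_{r_0}^{1}\amu(x,r)^{2}\,\tfrac{dr}{r}<\infty$, it suffices to control $\int_{0}^{r_0(x)}\amu(x,r)^{2}\,\tfrac{dr}{r}$ for some small $r_0(x)>0$. Restricting $\mu$ to suitable subsets on which $g$ lies between two positive constants, and keeping only scales at which the rest of $\mu$ is negligible, I may then assume $\mu=g\,\Hn{\Gamma}$ with $g$ comparable to~$1$.

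\emph{Splitting and the geometric term.} Fix such an $x$, a small scale $r$, and let $L=L_{x,r}$ be the $n$-plane through $x$ whose direction is the $r$-scale affine approximation of $A$. I transport $\varphi_{B}\mu$ to $a_{B,L}\varphi_{B}\Hn{L}$ in two conceptual stages: first \emph{flatten}, pushing $\varphi_{B}\,g\,\Hn{\Gamma}$ forward by the map that replaces $A$ by its affine approximation; this produces a measure $\nu$ on $L$ whose density $h$ with respect to $\Hn{L}$ lies in $L^{\infty}$ (it is the product of $g$ with the Jacobian of the flattening). Then \emph{equalise the density}, transporting $\nu$ to $a_{B,L}\varphi_{B}\Hn{L}$. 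Composing the two transport plans and using the triangle inequality for $W_2$ yields $\amu(x,r)\le C\,\beta_{A}(x,r)+C\,\omega(x,r)$, where
\begin{equation*}
\beta_{A}(x,r)^{2}=\frac{1}{r^{\,n+2}}\inf_{P\ \mathrm{affine}}\int_{B^{n}(x,r)}|A(y)-P(y)|^{2}\,dy
\end{equation*}
is the $L^{2}$ Dorronsoro coefficient of $A$ (to which the cost of flattening and the non-alignment of the cut-off $\varphi_{B}$ along the graph are both comparable), and $\omega(x,r)=\tfrac{1}{r\,\mu(B)^{1/2}}\,W_2\big(\varphi_{B}h\,\Hn{L},\,a_{B,L}\varphi_{B}\Hn{L}\big)$ is the residual density mismatch on the plane. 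Since $A$ is Lipschitz, hence in $W^{1,2}_{\mathrm{loc}}$, Dorronsoro's theorem (applied to a compactly supported modification of $A$) gives $\int_{0}^{1}\beta_{A}(x,r)^{2}\,\tfrac{dr}{r}<\infty$ for a.e.\ $x$.

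\emph{The density term.} Because $h$ is comparable to $1$, the comparison measure is bounded below away from $\partial B(x,3r)$, and the Peyre--Loeper comparison of $W_2$ with the homogeneous negative Sobolev norm --- together with the quadratic vanishing of $\varphi$ at $\partial B(0,3)$, which is exactly what makes the weighted estimate near the boundary go through --- gives
\begin{equation*}
\omega(x,r)^{2}\ \le\ \frac{C}{r^{2}\,\mu(B)}\;\big\|(h-h_{B})\,\one_{B}\big\|_{\dot H^{-1}(L)}^{2}.
\end{equation*}
It then remains to show $\int_{0}^{1}\omega(x,r)^{2}\,\tfrac{dr}{r}<\infty$ for a.e.\ $x$ whenever $h\in L^{2}_{\mathrm{loc}}$. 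Discretising $r=2^{-k}$, decomposing $h-h_{Q}=\sum_{j\ge k}(\mathbb{E}_{j+1}h-\mathbb{E}_{j}h)$ on the dyadic cube $Q=Q_{k}(x)$ of side $2^{-k}$, and using that a function with mean zero on the cubes of side $2^{-j}$ has $\dot H^{-1}$-norm comparable to $2^{-j}$ times its $L^{2}$-norm (with almost orthogonality across scales), one obtains
\begin{equation*}
\omega(x,2^{-k})^{2}\ \approx\ \sum_{j\ge k}2^{-2(j-k)}\,\frac{1}{|Q_{k}(x)|}\int_{Q_{k}(x)}\big|(\mathbb{E}_{j+1}h-\mathbb{E}_{j}h)(y)\big|^{2}\,dy.
\end{equation*}
The weight $2^{-2(j-k)}$ --- the quadratic down-weighting by $W_2$ of oscillations at scales finer than $r$ --- is the decisive feature: summing over $k$ and integrating over $x$ in a fixed large cube, the geometric series in $k$ collapses and one is left with $\sum_{j}\|\mathbb{E}_{j+1}h-\mathbb{E}_{j}h\|_{L^{2}}^{2}\lesssim\|h\|_{L^{2}}^{2}$ on a slightly larger cube, which is finite. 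Hence $\int_{0}^{1}\omega(x,r)^{2}\,\tfrac{dr}{r}$ is locally integrable, so finite a.e.; together with the bound on $\beta_A$ this proves the theorem.

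\emph{Main obstacle.} I expect the density term to be the heart of the matter. Two points require care. First, the realisation that it is the \emph{quadratic} nature of $W_2$, reflected in the factor $2^{-2(j-k)}$, that makes the square function finite a.e.\ for merely $L^{2}$ densities: with $W_1$ one would see only $2^{-(j-k)}$, and after summing in $k$ each dyadic oscillation of $h$ would be counted with a logarithmically divergent multiplicity. Second, the reduction step must be carried out carefully enough that one genuinely ends up with a density comparable to a constant, so that both the passage from $W_2$ to $\dot H^{-1}$ and the degeneration of $\varphi_{B}$ near $\partial B(x,3r)$ stay under control; verifying that this restriction of $\mu$ perturbs the square function of $\amu$ only negligibly is itself a nontrivial bookkeeping task.
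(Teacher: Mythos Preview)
Your strategy mirrors the paper's closely: reduce to a Lipschitz graph, split $\amu$ into a geometric piece (flatness of $\Gamma$) and a density piece (oscillation of $g$), and control them separately. The paper follows the same outline but implements each step differently. For the geometric term it invokes Tolsa's $\alpha_{\sigma,2}$ Carleson estimate for the uniformly rectifiable measure $\sigma\approx\Hn{\Gamma}$ (\thmref{thm:characterization UR}) rather than Dorronsoro; for the density term it quotes Tolsa's direct $W_2$--martingale lemma \cite[Remark~3.14]{tolsa2012mass}, which gives $W_2(\psi_Q g\,\H^n,a\psi_Q\H^n)^2\lesssim\sum_{P\subset Q}\lVert\Delta_P g\rVert_2^2\,\ell(P)\ell(Q)$, instead of passing through $\dot H^{-1}$. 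Your route via the Peyre--Loeper comparison is a genuine alternative and in fact gives the sharper decay $\ell(P)^2$ in place of $\ell(P)\ell(Q)$, though both sum to $\lVert g\rVert_2^2$. One caution: the weighted $W_2\to\dot H^{-1}$ step with a density vanishing at $\partial(3B)$ is not a black box, and the almost-orthogonality of Haar differences in $\dot H^{-1}$ needs a short argument; Tolsa's lemma sidesteps both issues by exhibiting explicit transport plans.

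The one place where your sketch understates the difficulty is the reduction ``I may then assume $\mu=g\,\Hn{\Gamma}$''. For a general rectifiable $\mu$, the mass off $\Gamma$ cannot be discarded by a density argument: at a density point $x\in\Gamma$ one only knows $\mu(B(x,r)\setminus\Gamma)/\mu(B(x,r))\to 0$, and the crude bound $\amu(x,r)^2\lesssim\alpha_{\mu|_\Gamma,2}(x,r)^2+\mu(3B\setminus\Gamma)/\mu(B)$ yields a Dini integral that can diverge. The correct error term carries the \emph{distance} to $\Gamma$: each piece of off-$\Gamma$ mass at distance $\delta$ contributes $\sim\delta^2/r^{n+2}$, and it is precisely this quadratic factor (the same $W_2$ gain you identify for the density term) that makes the sum over scales converge. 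The paper implements this via a Whitney decomposition of $\R^d\setminus\Gamma$ and auxiliary measures $\nu_Q$ obtained by projecting Whitney cubes onto $\Gamma$ (Section~\ref{sec:approximating measures nuQ} and \lemref{lem:estimate alpha mu with alpha sigma}); the resulting error $\sum_{P\in\W_Q,\,P\subset\lambda\widetilde B_Q}\mu(P)\ell(P)^2/\ell(Q)^{n+2}$ sums over $Q$ to $\mu(\R^d)$. You could equally well fold this into your flattening step --- project the off-$\Gamma$ part of $\varphi_B\mu$ orthogonally to $\Gamma$ first, at cost $r^{-n-2}\int_{3B}\dist(y,\Gamma)^2\,d\mu(y)$, and then cite \cite{tolsa2015characterization} for the finiteness of this $\beta$-type square function --- but it is more than bookkeeping and should be made explicit.
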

	
	\begin{remark}\label{rem:sharpness}
		The characterization above is sharp in the following sense. Suppose $1\le p \le q<\infty$. Then it follows easily by H\"{o}lder's inequality, definition of $\alpha_p$ numbers, and the fact that $\supp\varphi_B\subset 3B$, that
		\begin{equation*}
		\alpha_{\mu,p}(B)\le\bigg(\frac{\mu(3B)}{\mu(B)}\bigg)^{1/p-1/q}\alpha_{\mu,q}(B).
		\end{equation*}
		Hence, for doubling measures, $\alpha_p$ numbers are increasing in $p$. It is well known that rectifiable measures are pointwise doubling, i.e. \begin{equation}\label{eq:pointwise doubling}
		\limsup_{r\to 0^+}\frac{\mu(B(x,2r))}{\mu(B(x,r))}<\infty\qquad\text{for $\mu$-a.e. $x\in\R^d$,}
		\end{equation}
		and so the finiteness of $\alpha_2$ square function \eqref{eq:alpha2 square function} implies finiteness of $\alpha_p$ square function for any $1\le p \le 2.$ However, in general one cannot expect finiteness of $\alpha_p$ square function for $p>2$, see \remref{rem:sharpness 2}. In other words, \thmref{thm:necessary condition} cannot be improved.
	\end{remark}
	\begin{remark}\label{rem:different alphas}
		For technical reasons, in \cite{dabrowski2019sufficient} we define $\alpha_p$ numbers normalizing by $\mu(3B)$ (i.e. in \eqref{eq:definition alphas} we replace $\mu(B)$ with $\mu(3B)$). Of course, the $3B$-normalized coefficients are \emph{smaller} than the $B$-normalized variant used here. Hence, if \eqref{eq:alpha2 square function} is finite for $B$-normalized $\alpha_2$ numbers, then it is finite for $3B$-normalized $\alpha_2$ numbers, and so \cite[Theorem 1.3]{dabrowski2019sufficient} may be applied to get Corollary \ref{thm:thm equivalence}.
	\end{remark}
	
	It is worthwhile to compare this result with other recent characterizations of rectifiability which all involve some sort of scale-invariant quantities measuring flatness. 
	\subsection{\texorpdfstring{$\beta_p$ numbers}{beta p numbers}} The first flatness-quantifying coefficients to be defined were Jones' $\beta$ numbers, originating in \cite{jones1990rectifiable,david1991singular,david1993analysis}. For $1\le p <\infty$ and a Radon measure $\mu$ on $\R^d$ set
	\begin{equation}\label{eq:beta definition}
	\beta_{\mu,p}(x,r)=\inf_L\left(\frac{1}{r^n}\int_{B(x,r)}\left(\frac{\dist(y,L)}{r}\right)^p\ d\mu(y)\right)^{1/p},
	\end{equation}
	where the infimum runs over all $n$-planes $L$ intersecting $B(x,r)$. 
%	The letter $h$ in the superscript stands for \emph{homogeneous} and refers to the normalizing factor $r^{n}$. For us it will be more convenient to normalize by $\mu(B(x,r))$ instead, and so we define
%	\begin{equation*}
%	\beta_{\mu,p}(x,r)=\inf_L\left(\frac{1}{\mu(B(x,r))}\int_{B(x,r)}\left(\frac{\dist(y,L)}{r}\right)^p\ d\mu(y)\right)^{1/p}.
%	\end{equation*}
	Let us also define upper and lower $n$-dimensional densities of a Radon measure $\mu$ at $x\in\R^d$ as
	\begin{equation*}
	\Theta^{n,*}(x,\mu)=\limsup_{r\to 0^+}\frac{\mu(B(x,r))}{r^n},\qquad \Theta_{*}^n(x,\mu)=\liminf_{r\to 0^+}\frac{\mu(B(x,r))}{r^n},
	\end{equation*}
	respectively. If both quantities are equal, we set $\Theta^{n}(x,\mu) = \Theta^{n,*}(x,\mu) = \Theta_{*}^n(x,\mu)$ and we call it $n$-dimensional density.
	
	In \cite{tolsa2015characterization} it was shown that for a rectifiable measure $\mu$ we have
	\begin{equation}\label{eq:Jones square function finite}
	\int_0^1 \beta_{\mu,2}(x,r)^2\ \frac{dr}{r}<\infty\qquad\text{for $\mu$-a.e. $x\in\R^d.$}
	\end{equation}
	%Since for rectifiable measures $\Theta^{n}(x,\mu)$ exists and is positive and finite almost everywhere, it is easy to see that \eqref{eq:Jones square function finite} is true also after replacing $\beta_{\mu,2}^h(x,r)$ by $\beta_{\mu,2}(x,r)$.
	
	On the other hand, Azzam and Tolsa proved in \cite{azzam2015characterization} that if a Radon measure $\mu$ satisfies \eqref{eq:Jones square function finite} and 
	\begin{equation}\label{eq:upper density bdd}
	0<\Theta^{n,*}(x,\mu)<\infty\qquad\text{for $\mu$-a.e. $x\in\R^d$,}
	\end{equation}
	then $\mu$ is $n$-rectifiable. More recently, Edelen, Naber and Valtorta \cite{edelen2016quantitative} managed to weaken the assumption \eqref{eq:upper density bdd} to
	\begin{equation}\label{eq:weakened upper density bdd}
	\Theta^{n,*}(x,\mu)>0\quad \text{and} \quad \Theta_*^{n}(x,\mu)<\infty\qquad\text{for $\mu$-a.e. $x\in\R^d$.}
	\end{equation}
	An alternative proof showing that \eqref{eq:Jones square function finite} and \eqref{eq:weakened upper density bdd} are sufficient for rectifiability is given in \cite{tolsa2017rectifiability}.
	\begin{theorem}[\cite{tolsa2015characterization,azzam2015characterization,edelen2016quantitative}]\label{thm:beta characterization}
	Let $\mu$ be a Radon measure on $\R^d$. Then, $\mu$ is $n$-rectifiable if and only if \eqref{eq:Jones square function finite} and \eqref{eq:weakened upper density bdd} hold for $\mu$-a.e. $x\in\R^d$.
	\end{theorem}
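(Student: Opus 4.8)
The plan is to prove the two implications separately. \textbf{Necessity} is the softer direction. The density statement \eqref{eq:weakened upper density bdd} is in fact classical, and more is true: if $\mu$ is $n$-rectifiable then $\mu\ll\H^n$ and $\mu$ is carried by countably many Lipschitz graphs, so by the area formula together with Lebesgue differentiation the density $\Theta^n(x,\mu)$ exists and satisfies $0<\Theta^n(x,\mu)<\infty$ for $\mu$-a.e.\ $x$ (see \cite[Chapters 15--17]{mattila1999geometry}). For \eqref{eq:Jones square function finite} I would reduce, using countable additivity, $\mu\ll\H^n$, and the Radon--Nikodym theorem, to the model case $\mu=\H^n|_{\Gamma}$ with $\Gamma=\graph(A)$ for a Lipschitz $A\colon\R^n\to\R^{d-n}$; for such $\mu$ the number $\beta_{\mu,2}(x,r)$ is comparable to the scale-invariant $L^2$ distance from $A$ to its best affine approximant on the ball of radius $r$, so finiteness of $\int_0^1\beta_{\mu,2}(x,r)^2\,\tfrac{dr}{r}$ at a.e.\ $x$ is a pointwise form of Dorronsoro's inequality, i.e.\ an $L^2$ square-function estimate for $\nabla A$. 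This is the content of \cite{tolsa2015characterization}.

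\textbf{Sufficiency} is the substantial direction; I would first handle it under the stronger hypothesis \eqref{eq:upper density bdd}, following \cite{azzam2015characterization,tolsa2017rectifiability}. By Egorov's theorem and a standard restriction argument it suffices, given $\mu$ satisfying \eqref{eq:Jones square function finite} and \eqref{eq:upper density bdd}, to fix $M<\infty$ and a bounded set $F$ with $\mu(F)>0$ on which
\[
M^{-1}\le\Theta^n_*(x,\mu)\le\Theta^{n,*}(x,\mu)\le M,\qquad \int_0^1\beta_{\mu,2}(x,r)^2\,\tfrac{dr}{r}\le M,
\]
and to show $\mu|_F$ is $n$-rectifiable; on such $F$ one also has $\mu|_F$ and $\H^n|_F$ mutually absolutely continuous with comparable densities, so it is enough to cover $\mu$-a.e.\ point of $F$ by Lipschitz graphs. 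Fixing a cube (or ball) $R$ in which $F$ has large density, I would run a stopping-time decomposition on the dyadic descendants $Q$ of $R$, stopping at $Q$ when $Q$ exits a neighborhood of $F$ (type I), when the density ratio $\mu(Q)/\ell(Q)^n$ leaves the band $[\tfrac12\vartheta_R,2\vartheta_R]$ (the density-stopping cubes), or when a sum of $\beta_{\mu,2}(\cdot)^2$ over the cubes between $R$ and $Q$ first exceeds a small threshold $\varepsilon$ (the $\beta$-stopping cubes). On the resulting tree the best planes $L_Q$ satisfy $\angle(L_Q,L_{Q'})\lesssim\varepsilon^{1/2}$ for $Q\subset Q'$ of comparable size, so the higher-dimensional geometric lemma of David--Semmes type produces a single Lipschitz graph $\Gamma_R$ of constant $\lesssim\varepsilon^{1/2}$ containing the part of $F\cap R$ not shadowed by a density-stopping cube.

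The heart of the argument is the packing bound $\sum_{Q\in\Stop(R)}\mu(Q)\le(\tfrac12+C\varepsilon)\,\mu(R)$. The $\beta$-stopping cubes carry mass at most a constant times $\varepsilon^{-1}\iint\beta_{\mu,2}(x,r)^2\,d\mu(x)\,\tfrac{dr}{r}$, which, after passing to a further subset of $F$ on which this Carleson-type integral is small (possible since \eqref{eq:Jones square function finite} holds $\mu$-a.e.), is $\le\varepsilon\,\mu(R)$; the density-stopping cubes are controlled by the separate packing estimate of \cite{azzam2015characterization} (or, as in \cite{tolsa2017rectifiability}, by comparing $\mu$ on a stopping cube with a flat measure $a\,\H^n|_L$ and again invoking the small Carleson norm of the square function). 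Iterating the construction on each stopping cube and summing the resulting geometric series in the packing constant, $\mu$-a.e.\ point of $F$ ends up on one of countably many Lipschitz graphs; combined with $\mu|_F\ll\H^n$ this yields $n$-rectifiability of $\mu|_F$, and letting $F$ exhaust $\supp\mu$ up to a $\mu$-null set completes this case.

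What remains --- and is the genuinely hard part, due to \cite{edelen2016quantitative} --- is to replace \eqref{eq:upper density bdd} by the sharp hypothesis \eqref{eq:weakened upper density bdd}. Now $\Theta^{n,*}$ may be infinite, so one cannot restrict to a set with density ratios pinched from above, and one does not even know a priori that $\mu\ll\H^n$ (which is part of the definition of rectifiability and must be produced). Following \cite{edelen2016quantitative}, I would replace the Corona/Lipschitz-graph scheme by a quantitative Reifenberg construction in which the finiteness of \eqref{eq:Jones square function finite}, localized via the lower density, is used both to build bi-Lipschitz parametrizations of good pieces and, crucially, to bound $\H^n$ of those pieces (hence to recover local finiteness of $\H^n$ on the relevant set), after which one shows that the leftover set where the density blows up or the construction degenerates is $\mu$-null. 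The main obstacle I expect is precisely this step: simultaneously extracting $\mu\ll\H^n$ and the packing estimates while assuming only that the \emph{lower} density is finite; by contrast the Corona machinery of the previous two paragraphs, though technical, is by now fairly standard. An alternative route avoiding discrete Reifenberg, via a stopping-time scheme and careful comparison to flat measures, is given in \cite{tolsa2017rectifiability}.
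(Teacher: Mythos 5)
The paper does not prove this theorem at all: it is stated as background and attributed to \cite{tolsa2015characterization} (necessity of \eqref{eq:Jones square function finite}; the density condition \eqref{eq:weakened upper density bdd} is classical for rectifiable measures, cf.\ \cite[Chapters 15--17]{mattila1999geometry}), to \cite{azzam2015characterization} (sufficiency under the stronger hypothesis \eqref{eq:upper density bdd}), and to \cite{edelen2016quantitative}, with \cite{tolsa2017rectifiability} as an alternative, for the sharp form. So there is no in-paper argument to compare yours against; what you wrote is a summary of the strategies of those references, and as a roadmap it is accurate --- but it is a roadmap rather than a proof, since every genuinely hard step is delegated to the citations. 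Concretely: (i) in the necessity direction, the reduction to $\mu=\Hn{\Gamma}$ is not a routine Radon--Nikodym localization, because $\beta_{\mu,2}(x,r)$ sees all of $\mu$ in $B(x,r)$, so the mass lying off the chosen graph and the variable density must be controlled; this, together with the Dorronsoro-type estimate, is precisely the content of \cite{tolsa2015characterization}. (ii) In the sufficiency direction, the packing estimate for the high/low density stopping cubes that you dispose of in one sentence is the main theorem of \cite{azzam2015characterization}; note also that smallness of the sums of $\beta_{\mu,2}^2$ alone does not control the best approximating planes --- the angle estimates you use require the lower density bound supplied by your density stopping, so that dependence should be made explicit. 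Moreover \cite{azzam2015characterization} actually produces bi-Lipschitz images via a David--Toro Reifenberg-type parametrization rather than Lipschitz graphs; the graph-based corona scheme you sketch is closer in spirit to \cite{tolsa2017rectifiability}. (iii) Your account of the \cite{edelen2016quantitative} step --- that one must simultaneously extract $\mu\ll\H^n$ and local $\H^n$-finiteness from the square function and the finiteness of the lower density --- correctly identifies where the real difficulty sits. With these caveats, your proposal is consistent with how the theorem is established in the literature, which is also all that the paper itself does.
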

	Contrary to Corollary \ref{thm:thm equivalence}, some sort of assumptions on densities of measure seem to be unavoidable because $\beta_2$ numbers are ``weaker'' than $\alpha_2$ numbers (see \lemref{lem:alpha2 controls alpha and beta2}). What we mean by that is the following: coefficients $\beta_{p}$ measure how close is $\supp\mu$ to being contained in an $n$-plane, and so they do not see holes or high concentrations of measure. \emph{Any} measure with support contained in an $n$-plane will have all $\beta$ numbers equal to 0 -- even Dirac mass! Moreover, due to the normalizing factor $r^{n}$ in \eqref{eq:beta definition}, $\beta$ numbers do not charge higher dimensional measures properly (note that the $(n+1)$-dimensional Lebesgue measure satisfies \eqref{eq:Jones square function finite}). Coefficients $\alpha_p$, on the other hand, penalize such phenomena.
	
	The choice of $p=2$ in the above considerations is not arbitrary. Condition \eqref{eq:Jones square function finite} with $\beta_{\mu,2}(x,r)$ replaced by $\beta_{\mu,p}(x,r)$ is necessary for rectifiability only for $1\le p\le 2$. On the other hand, \eqref{eq:Jones square function finite} together with \eqref{eq:upper density bdd} imply rectifiability only for $p\ge 2$. See \cite{tolsa2017rectifiability} for relevant counterexamples. Still, if instead of \eqref{eq:upper density bdd} we assume that $\Theta_*^{n}(\mu,x)>0$ and $\Theta^{n,*}(\mu,x)<\infty$  for $\mu$-a.e. $x\in\R^d$, then the finiteness of $\beta_p$ square function for certain $p<2$ becomes sufficient for rectifiability, see \cite{pajot1997conditions,badger2016two}.
	
	\begin{remark}\label{rem:sharpness 2}
		The example from \cite{tolsa2017rectifiability} shows that one cannot expect finiteness of the $\alpha_p$ square function when $p>2$. Indeed, it is easy to see that $\alpha_p$ numbers bound from above $\beta_p$ numbers (see \lemref{lem:alpha2 controls alpha and beta2}, the same proof works with arbitrary $1\le p<\infty$). Tolsa gave an example of a rectifiable measure such that for all $p>2$ the square function involving $\beta_p$ in infinite almost everywhere. Hence, the $\alpha_p$ square function of that measure is also infinite almost everywhere.
	\end{remark}
	
	Let us mention that modified versions of $\beta$ numbers are also used to study a competing notion of rectifiability for measures, the so-called \emph{Federer rectifiability}. We say that a measure is $n$-rectifiable in the sense of Federer if it satisfies \eqref{eq:measure concentrated on images}, and no absolute continuity with respect to $\H^n$ is required. Dropping the absolute continuity assumption makes such measures very difficult to characterize. A surprising example of a doubling, Federer $1$-rectifiable measure supported on the whole plane was found by Garnett, Killip and Schul \cite{garnett2010doubling}. Nevertheless, for $n=1$ significant progress has been achieved in \cite{lerman2003quantifying,badger2015multiscale,badger2016two,azzam2016characterization,badger2017multiscale,martikainen2018boundedness}. See also a recent survey of Badger \cite{badger2018generalized}.
	
	\thmref{thm:necessary condition} yields an easy corollary involving \emph{bilateral} $\beta$ numbers. Set
	\begin{multline*}
	b\beta_{\mu,2}(x,r)^2=\\
	\inf_L\frac{1}{r^n}\left(\int_{B(x,r)}\left(\frac{\dist(y,L)}{r}\right)^2\ d\mu(y) + \int_{B(x,r)}\left(\frac{\dist(y,\supp\mu)}{r}\right)^2\ d\Hn{L}(y)\right).
	\end{multline*}
	As shown in \lemref{lem:alpha2 controls alpha and beta2}, if a ball $B(x,r)$ satisfies $\mu(B(x,r))\approx r^n$ (see Subsection \ref{subsec:notation} for the precise meaning of $\approx$ symbol), then coefficients $\alpha_{\mu,2}(x,r)$  bound from above $b\beta_{\mu,2}(x,r)$. Since for $n$-rectifiable measure $\mu$  we have $0<\Theta^{n}(\mu,x)<\infty$ $\mu$-almost everywhere, we immediately get the following.
	\begin{cor}
		Let $\mu$ be an $n$-rectifiable measure on $\R^d$. Then for $\mu$-a.e. $x\in\R^d$ we have
		\begin{equation*}
		\int_0^{1}b\beta_{\mu,2}(x,r)^2\ \frac{dr}{r}<\infty.
		\end{equation*}
	\end{cor}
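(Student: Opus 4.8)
The plan is to obtain the corollary from \thmref{thm:necessary condition} by way of the comparison estimate in \lemref{lem:alpha2 controls alpha and beta2} and the classical density theory for rectifiable measures. Recall that \lemref{lem:alpha2 controls alpha and beta2} gives, for any ball $B(x,r)$ with $\mu(B(x,r))\approx r^n$, a bound $b\beta_{\mu,2}(x,r)\le C\,\alpha_{\mu,2}(x,r)$ with $C$ depending only on $n$, $d$ and the implicit comparability constant. Since $\mu$ is $n$-rectifiable we have $\mu\ll\H^n$ and $0<\Theta^{n}(x,\mu)<\infty$ for $\mu$-a.e.\ $x$, so around $\mu$-a.e.\ point the hypothesis $\mu(B(x,r))\approx r^n$ holds at all small enough scales; there the comparison estimate applies, and the finiteness \eqref{eq:alpha2 square function} of the $\alpha_2$ square function transfers to $b\beta_{\mu,2}$.

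Concretely, I would fix $x\in\R^d$ in the full-$\mu$-measure set on which simultaneously $x\in\supp\mu$, $0<\Theta^n(x,\mu)<\infty$, and $\int_0^1\alpha_{\mu,2}(x,r)^2\,\tfrac{dr}{r}<\infty$ (the last by \thmref{thm:necessary condition}). From the two-sided density bound one picks $r_x\in(0,1)$ and $c_x\ge 1$ with $c_x^{-1}r^n\le\mu(B(x,r))\le c_x r^n$ for $0<r<r_x$. For these scales \lemref{lem:alpha2 controls alpha and beta2} yields $b\beta_{\mu,2}(x,r)\le C_x\,\alpha_{\mu,2}(x,r)$ with $C_x=C(c_x,n,d)$, hence
\begin{equation*}
\int_0^{r_x}b\beta_{\mu,2}(x,r)^2\,\frac{dr}{r}\le C_x^2\int_0^{1}\alpha_{\mu,2}(x,r)^2\,\frac{dr}{r}<\infty .
\end{equation*}

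For the range $r\in[r_x,1]$ I would just check that $b\beta_{\mu,2}(x,r)$ is bounded there. Testing the infimum defining $b\beta_{\mu,2}(x,r)$ with an $n$-plane $L$ through $x$: on $B(x,r)$ one has $\dist(y,L)\le r$, so the first integrand is at most $1$; and $\dist(y,\supp\mu)\le|y-x|\le r$ there (using $x\in\supp\mu$), so the second integrand is at most $1$ and the second term is at most $\H^n(B(x,r)\cap L)/r^n=\omega_n$. Thus
\begin{equation*}
b\beta_{\mu,2}(x,r)^2\le\frac{\mu(B(x,r))}{r^n}+\omega_n\le\frac{\mu(B(x,1))}{r_x^n}+\omega_n=:M_x
\end{equation*}
for all $r\in[r_x,1]$, where $\omega_n=\H^n(B^n(0,1))$; consequently $\int_{r_x}^1 b\beta_{\mu,2}(x,r)^2\,\tfrac{dr}{r}\le M_x\log(1/r_x)<\infty$. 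Adding the two ranges finishes the proof.

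Given \thmref{thm:necessary condition} and \lemref{lem:alpha2 controls alpha and beta2}, this corollary is essentially bookkeeping, so there is no genuine obstacle at this stage; the only substantive ingredient among the inputs is \lemref{lem:alpha2 controls alpha and beta2}, namely the fact that $\alpha_{\mu,2}$ dominates $b\beta_{\mu,2}$ once $\mu$ is Ahlfors-type regular at the scale in question. The one minor point to keep in mind is that $r_x$, $c_x$, $C_x$, $M_x$ are allowed to depend on the base point $x$; this is harmless since the conclusion is a pointwise almost-everywhere statement. If one prefers uniform constants, the same argument runs verbatim on each good set $G_k=\{x:k^{-1}r^n\le\mu(B(x,r))\le kr^n\ \text{for}\ 0<r<1/k\}$, and $\mu(\R^d\setminus\bigcup_k G_k)=0$.
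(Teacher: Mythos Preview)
Your proof is correct and follows exactly the route indicated in the paper: combine \thmref{thm:necessary condition} with \lemref{lem:alpha2 controls alpha and beta2}, using that $0<\Theta^n(x,\mu)<\infty$ for $\mu$-a.e.\ $x$ to guarantee $\mu(B(x,r))\approx r^n$ at small scales. The paper leaves the corollary without a formal proof (it says ``we immediately get the following''), so your write-up simply fills in the bookkeeping the paper omits, including the harmless treatment of the tail range $r\in[r_x,1]$.
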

	
	\subsection{\texorpdfstring{$\alpha$}{alpha} numbers} Another kind of coefficients quantifying flatness that has attracted a lot of interest are $\alpha$ numbers, first introduced in \cite{tolsa2008uniform}. Their definition is very similar to that of $\alpha_p$ coefficients, and in fact they can be seen as a variant of $\alpha_1$ numbers, see \cite[Section 5]{tolsa2012mass}. 
	
	Like before, we define a distance on the space of Radon measures. Given Radon measures $\mu, \nu$, and an open ball $B$ we set
	\begin{equation*}
	F_B(\mu, \nu)=\sup\left\{\left\lvert\int\phi\ d\mu - \int\phi\ d\nu \right\rvert\ :\ \phi\in\text{Lip}_1(B)\right\},
	\end{equation*}
	where
	\begin{equation*}
	\lip_1(B)=\{\phi\ :\ \lip(\phi)\le 1,\ \supp \phi\subset B \}.
	\end{equation*}	
	The coefficient $\alpha$ of a measure $\mu$ in a ball $B=B(x,r)$ is defined as
	\begin{equation*}
	\alpha_{\mu}(B)=\inf_{c,L}\frac{1}{r\, \mu(B)} F_B(\mu,c\restr{\H^n}{L}),
	\end{equation*}
	where the infimum is taken over all $n$-planes $L$ and all $c\ge 0$ (we do not demand a priori that $\mu(B)=c\Hn{L}(B)$).
	
	Tolsa showed in \cite{tolsa2015characterization} that given a rectifiable measure $\mu$ we have
	\begin{equation}\label{eq:alpha square function}
	\int_0^1 \alpha_{\mu}(x,r)^2\ \frac{dr}{r}<\infty\qquad\text{for $\mu$-a.e. $x\in\R^d.$}
	\end{equation}
	One might ask if \eqref{eq:alpha square function} is also a sufficient condition for rectifiability. Partial answers to that question were given in \cite{azzam2016wasserstein} and \cite{orponen2018absolute}. Very recently Azzam, Tolsa and Toro \cite{azzam2018characterization} proved that a measure $\mu$ satisfying \eqref{eq:alpha square function} which is also pointwise doubling, i.e. such that \eqref{eq:pointwise doubling} holds, is rectifiable. Since rectifiable measures satisfy \eqref{eq:pointwise doubling}, the following characterization holds.
	\begin{theorem}[{\cite{tolsa2015characterization,azzam2018characterization}}]\label{thm:alpha doubling characterization}
		Let $\mu$ be a Radon measure on $\R^d$. Then $\mu$ is $n$-rectifiable if and only if \eqref{eq:alpha square function} and \eqref{eq:pointwise doubling} hold for $\mu$-a.e. $x\in\R^d$.
	\end{theorem}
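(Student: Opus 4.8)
The statement combines two implications, and the plan is to treat them separately. The forward implication is the soft one. Pointwise doubling \eqref{eq:pointwise doubling} for an $n$-rectifiable $\mu$ is classical: such a measure has a density $\Theta^n(x,\mu)\in(0,\infty)$ at $\mu$-a.e.\ $x$, whence $\mu(B(x,2r))/\mu(B(x,r))\to 2^n$. For the $\alpha$ square function \eqref{eq:alpha square function} I would run the argument of \cite{tolsa2015characterization}, which parallels the proof of Theorem~\ref{thm:necessary condition}: decompose $\mu$ (absolutely continuous with respect to $\H^n$ and carried by countably many Lipschitz graphs) into pieces $\mu|_{E_i}=g_i\,\Hn{\Gamma_i}$ with $\Gamma_i$ a Lipschitz graph and $0<g_i<\infty$ $\mu$-a.e.; reduce to proving the estimate at $\mu$-a.e.\ $x\in E_i$; and there compare $\varphi_B\mu$ on $B=B(x,r)$ with the flat measure $\Theta^n(x,\mu)\,\varphi_B\Hn{\Gamma_i}$ up to an error controlled by the oscillation of $g_i$ and of $\one_{E_i}$ near $x$ --- finite at $\Hn{\Gamma_i}$-a.e.\ point because the dyadic averages of these bounded functions form $L^2$-bounded martingales with square-summable increments. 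Since $\Hn{\Gamma_i}$ is $n$-AD-regular and uniformly rectifiable, its $\alpha$ square function satisfies a Carleson packing estimate (a variant of Theorem~\ref{thm:characterization UR}; see \cite{tolsa2008uniform,tolsa2012mass}), so $\int_0^1\alpha_{\Hn{\Gamma_i}}(x,r)^2\,\tfrac{dr}{r}<\infty$ for a.e.\ $x$; combining this with the above comparison finishes the forward direction.

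The reverse implication is the substantial one --- the theorem of Azzam, Tolsa and Toro \cite{azzam2018characterization} --- and I would organise it as follows. First, by a standard exhaustion it suffices to produce, for every $\varepsilon>0$, an $n$-rectifiable Borel set $G$ with $\mu(\R^d\setminus G)<\varepsilon$, so one may pass to a subset on which the pointwise doubling constant and $\int_0^1\alpha_\mu(x,r)^2\,\tfrac{dr}{r}$ are \emph{uniformly} bounded; the latter immediately gives the scale-invariant Carleson estimate $\int_0^R\int_B\alpha_\mu(y,r)^2\,d\mu(y)\,\tfrac{dr}{r}\lesssim\mu(B)$. Second, from the smallness of $\alpha_\mu(x,r)$ at most scales together with the doubling, I would deduce that at $\mu$-a.e.\ point $\mu$ has positive lower and finite upper $n$-density --- so that \eqref{eq:weakened upper density bdd} holds, and in particular $\mu\ll\H^n$. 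Third, I would run a corona/stopping-time decomposition over a dyadic lattice adapted to $\mu$ (compare the proof of Theorem~\ref{thm:beta characterization}): in the regions where $\alpha_\mu$ stays below a fixed small threshold the measure is flat at all scales down to the stopping scale, hence --- using the density bounds from the second step --- its support lies in a thin neighbourhood of a single Lipschitz graph, via a Reifenberg-type construction; the stopping cubes, where $\alpha_\mu$ is large, carry summable $\mu$-mass by the Carleson estimate of the first step; iterating over the stopping cubes exhausts $\mu$ up to a null set by Lipschitz graphs, so $\mu$ is $n$-rectifiable.

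The hard part will be the reverse implication, and within it the second step together with the corona construction. The underlying difficulty --- already flagged in Remark~\ref{rem:sharpness} and the discussion of Theorem~\ref{thm:beta characterization} --- is that the $\alpha$ numbers do not by themselves control the density of $\mu$: one must upgrade the pointwise doubling hypothesis, through a stopping-time argument, to honest locally uniform density bounds before any Reifenberg or Lipschitz-graph approximation becomes available, and then carefully control the transition between the flat regions and the stopping cubes. A subtler issue, already present in the forward direction, is that errors coming from density fluctuations must be measured with the Wasserstein distance (or a dyadic proxy) rather than with total variation, since the cruder bound leads to a Riesz-type energy that need not be summable.
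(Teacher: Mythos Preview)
The paper does not supply a proof of this theorem: it is stated purely as a citation, with the forward implication attributed to \cite{tolsa2015characterization} (together with the classical fact that rectifiable measures are pointwise doubling) and the reverse implication to \cite{azzam2018characterization}. There is therefore no argument in the paper to compare your sketch against beyond the one-line attribution in the paragraph preceding the theorem.

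Your outline is consistent with that attribution. The forward direction is accurately described and matches the strategy of \cite{tolsa2015characterization} (and indeed the structure of the present paper's proof of Theorem~\ref{thm:necessary condition}). For the reverse direction your three-step plan---pass to a set with uniform doubling and Carleson control on $\alpha_\mu$, extract $n$-dimensional density information, then run a corona decomposition with a Lipschitz-graph construction on the good branches---is a reasonable high-level summary of how results of this type are proved, and is broadly in the spirit of \cite{azzam2018characterization}. One caution: your step~2, in which you claim to obtain $0<\Theta^n_*(x,\mu)\le\Theta^{n,*}(x,\mu)<\infty$ directly from $\alpha_\mu$ small plus doubling, is where most of the real work in \cite{azzam2018characterization} lies, and the route taken there is not quite the one you describe; in particular, the absolute continuity $\mu\ll\H^n$ and the density control are obtained only after, and intertwined with, the stopping-time and approximation arguments rather than as a clean preliminary step. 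As a sketch this is acceptable, but you should not expect step~2 to be separable from step~3 in an actual write-up.
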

	In the same paper authors construct a purely unrectifiable measure satisfying \eqref{eq:alpha square function}, and so the pointwise doubling assumption \eqref{eq:pointwise doubling} cannot be omitted. Let us remark that in the characterization from Corollary \ref{thm:thm equivalence} we do not need to assume any doubling property.
	
	We mention briefly yet another kind of square functions used to describe rectifiability. \cite{tolsa2014rectifiability} and \cite{tolsa2017rectifiable} are devoted to the so-called $\Delta$ numbers, defined as $\Delta_{\mu}(x,r)=|\frac{\mu(B(x,r))}{r^n} - \frac{\mu(B(x,2r))}{(2r)^n}|$. The results from \cite{tolsa2014rectifiability} characterize rectifiable measures satisfying $0<\Theta^n_*(\mu,x)\le\Theta^{n,*}(\mu,x)<\infty$ for $\mu$-a.e $x\in\R^d$. In \cite{tolsa2017rectifiable} it was shown that for $n=1$ analogous results hold under the weaker assumption $0<\Theta^{1,*}(x,\mu)<\infty$ for $\mu$-a.e. $x\in\R^d$.
	
%	Since coefficients $\alpha_2$ bound from above simultaneously $\alpha$ numbers and $\beta_2$ numbers (see \lemref{lem:alpha2 controls alpha and beta2}), finiteness of the $\alpha_2$ square function \eqref{eq:alpha2 square function} implies finiteness of $\alpha$ and $\beta_2$ square functions \eqref{eq:alpha square function}, \eqref{eq:Jones square function finite}. As it turns out, the two latter conditions are already sufficient for rectifiability. 
%	
%	%	We define also
%	%	\begin{align*}
%	%	\beta_{\mu,1}(B)&=\inf_L\frac{1}{\mu(B)}\int_B\frac{\dist(y,L)}{r(B)}\ d\mu(y),\\
%	%	\beta_{\mu,2}(B)^2&=\inf_L\frac{1}{\mu(B)}\int_B\left(\frac{\dist(y,L)}{r(B)}\right)^2\ d\mu(y).
%	%	\end{align*}
%	%	Let us define
%	%	\begin{equation*}
%	%	\alpha_{\mu,2}(B)=\inf_{L}\frac{1}{\mu(B)^{1/2}r}W_2(\varphi_B\mu,c_B\varphi_B\H_L^n),
%	%	\end{equation*}
%	%	where 
%	%	\begin{equation*}
%	%	c_B = \frac{\int\varphi_B\ d\mu}{\int\varphi_B\ d\H^n_L}
%	%	\end{equation*}
%	%	Also, we denote by $L_B$ some $n$-plane for which the infimum above is attained.
%	\begin{theorem}\label{thm:main theorem}
%		Let $\mu$ be a Radon measure on $\R^d$. Then $\mu$ is $n$-rectifiable if and only if for $\mu$-a.e. $x\in\R^d$ we have
%		\begin{equation*}
%		\int_0^{1}\alpha_{\mu}(x,r)^2\ \frac{dr}{r}<\infty,
%		\end{equation*}
%		and
%		\begin{equation*}
%		\int_0^{1}\beta_{\mu,2}(x,r)^2\ \frac{dr}{r}<\infty.
%		\end{equation*}
%	\end{theorem}
	
	\subsection{Localizing \thmref{thm:necessary condition} and Organization of the Paper}\label{subsec:outline} 
	\autoref{thm:necessary condition} follows easily from the following lemma.
	\begin{lemma}\label{lem:general_with_Gamma}
		Let $\mu$ be an $n$-rectifiable measure on $\R^d$, and let $\Gamma\subset\R^d$ be an $n$-dimensional $1$-Lipschitz graph. Suppose $R\in \DG$ with $\ell(R)=1$ (see \eqref{eq:definition of DG} for the defintion of $\DG$). Then, for any $\varepsilon>0$, there exists a set $R'\subset R$ such that $\mu(R')\ge (1-\varepsilon)\mu(R)$ and
		\begin{equation}\label{eq:alpha square function on R'}
		\int_{R'}\int_0^{1}\alpha_{\mu,2}(x,r)^2\ \frac{dr}{r}\ d\mu(x)<\infty.
		\end{equation}
	\end{lemma}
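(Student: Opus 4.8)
The plan is to fix the scales via a Corona (David--Semmes type) decomposition of $R$ adapted to $\mu$ and to $\Gamma$, and to control the $\alpha_2$ square function tree by tree. Write $\mu=\mu\mres\Gamma+\mu_{\mathrm{far}}$, where $\mu_{\mathrm{far}}=\mu\mres(\R^d\setminus\Gamma)$ and $\mu\mres\Gamma=g\,\mu_\Gamma$ with $\mu_\Gamma:=\H^n\mres\Gamma$ and $g\in L^1_{\mathrm{loc}}(\mu_\Gamma)$. Since $\Gamma$ is a $1$-Lipschitz graph, $\mu_\Gamma$ is $n$-AD-regular and uniformly $n$-rectifiable with constants depending only on $n,d$, so Theorem~\ref{thm:characterization UR} (with $p=2$) tells us that $\alpha_{\mu_\Gamma,2}(y,s)^2\,d\mu_\Gamma(y)\,\tfrac{ds}{s}$ is a Carleson measure on any fixed ball around $R$; this records that $\Gamma$ is flat on average, and it takes care of the purely geometric part of the estimate. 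The two features of $\mu$ invisible to the uniformly rectifiable theory, and which must be controlled separately, are the oscillation of the density $g$ and the presence of mass off $\Gamma$.

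For $x\in\Gamma$ and $B=B(x,r)$, taking as competitor a plane $L$ best approximating $\mu_\Gamma$ near $B$ and using the triangle inequality for $W_2$ (after matching total masses), one obtains a bound of the schematic form
\begin{equation*}
\alpha_{\mu,2}(x,r)^2 \ \lesssim\ \alpha_{\mu_\Gamma,2}(x,r)^2 \ +\ \frac{W_2\big(\varphi_B g\,\mu_\Gamma,\ \bar g_B\,\varphi_B\mu_\Gamma\big)^2}{r^2\mu(B)} \ +\ \frac{\mu_{\mathrm{far}}(3B)}{\mu(B)},
\end{equation*}
where $\bar g_B$ is the $\varphi_B$-weighted average of $g$; the three terms record, respectively, the shape of $\Gamma$, the oscillation of the density, and the off-graph mass. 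The geometric term integrates to something finite in $\int_0^1(\cdot)\tfrac{dr}{r}$ directly from Theorem~\ref{thm:characterization UR}, once $x$ is restricted to the set where $g$ is bounded above (losing at most $\tfrac{\varepsilon}{2}\mu(R)$). The density term must not be estimated by the crude $\mathrm{diam}\cdot(\text{total variation})$ bound, which is too lossy, but by a sharp minimal-flow estimate, which ties it to $\Delta$-type increments $\big|\tfrac{\mu(B(x,r))}{r^n}-\tfrac{\mu(B(x,2r))}{(2r)^n}\big|$ and, summed over a single tree, telescopes. The far term need not decay at all scales at a fixed point; this is the genuine obstruction that forces the stopping-time machinery.

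Accordingly, I would run a stopping-time argument on $\DG$ below $R$: declare $Q$ a stopping cube when, descending from the top $Q_0$ of its tree, one of the following is violated for the first time -- the density $\mu(2Q)/\ell(Q)^n$ stays within a fixed multiplicative window of its value at $Q_0$ (stops of type $\HD$, $\LD$, together with a density-oscillation variant), the plane best fitting $\mu$ in $Q$ has rotated by at most a small angle relative to $Q_0$ (stops of type $\BA$, $\BS$), and $\mu(2Q\setminus\Gamma)\le\delta\,\mu(2Q)$ (the ``far'' stops governing the region $\RFar$). This produces a family $\Stop\subset\DG$ and a partition of $R$ into trees $\Tree(Q_0)$, $Q_0\in\Stop\cup\{R\}$, in each of which $\mu$ is comparable, up to the small parameters, to $a\,\H^n\mres\Gamma_{Q_0}$ for a $\delta$-Lipschitz graph $\Gamma_{Q_0}$ regularizing the approximating planes. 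Inside a tree, the geometric term is controlled by Theorem~\ref{thm:characterization UR} (applied to $\H^n\mres\Gamma_{Q_0}$ and to $\mu_\Gamma$), while the density and far terms are $\lesssim\delta$ or telescope, giving an estimate
\begin{equation*}
\int_{Q_0}\int_{r(x,Q_0)}^{\ell(Q_0)}\alpha_{\mu,2}(x,r)^2\,\frac{dr}{r}\,d\mu(x)\ \le\ C\,\mu(Q_0),
\end{equation*}
with $r(x,Q_0)$ the side length of the stopping cube of $\Tree(Q_0)$ containing $x$ (and $0$ if there is none). Summing over trees reduces \eqref{eq:alpha square function on R'} to the packing estimate $\sum_{Q_0\in\Stop}\mu(Q_0)\le C\,\mu(R)$, which one proves only after discarding a subset of $R$ of $\mu$-measure at most $\tfrac{\varepsilon}{2}\mu(R)$; this, together with the boundedness restriction on $g$, produces the set $R'\subsetneq R$.

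The main obstacle is precisely this packing estimate, and within it the far stops and the density stops: unlike in the uniformly rectifiable setting, these are triggered by soft, non-quantitative consequences of $n$-rectifiability -- the existence of $\Theta^n(\mu,\cdot)$ and the mutual singularity of $\mu_{\mathrm{far}}$ with $\mu_\Gamma$ -- rather than by an a priori Carleson bound, so turning them into a genuine packing condition requires a maximal-function/stopping-time argument valid only after removing a bad set of relative measure $\lesssim\varepsilon$. A secondary difficulty, specific to this paper, is that $W_2$ (unlike $W_1$ or the flat distance $F$ used for the $\alpha$ numbers) penalizes long-range transport quadratically, so the within-tree comparison of $\varphi_B\mu$ to the model flat measure must be realized by transport plans that keep mass essentially at the same height over the base plane; the smallness of the Lipschitz constant of $\Gamma_{Q_0}$ and the far-stopping rule are used exactly to make such transport plans available and cheap, and the sharp (rather than crude) $W_2$ estimate for the density term is what makes the per-tree bound sum correctly.
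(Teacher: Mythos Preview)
Your outline has the right architecture --- a stopping-time argument on $\DG(R)$, control of density oscillation, and separate handling of the off-graph mass --- but the treatment of $\mu_{\mathrm{far}}$ is where the scheme breaks as written. The crude term $\mu_{\mathrm{far}}(3B)/\mu(B)$ does not integrate against $\tfrac{dr}{r}$: inside a tree where your far-stopping rule $\mu(2Q\setminus\Gamma)\le\delta\,\mu(2Q)$ holds, this term is $\lesssim\delta$ at \emph{every} scale, so the contribution to $\int_{r(x,Q_0)}^{\ell(Q_0)}(\cdot)\tfrac{dr}{r}$ is $\delta$ times the depth of the tree, which is unbounded; nothing here telescopes. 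The fix the paper uses is the sharp transport estimate coming from the Whitney decomposition: a cube $P\in\W^e$ sits at height $\approx\ell(P)$ above $\Gamma$, so pushing $\mu\mres P$ to $(\Pi_\Gamma)_*(\mu\mres P)$ costs $\lesssim\ell(P)^2\mu(P)$ in $W_2^2$, and the off-graph contribution to $\widehat\alpha_{\mu,2}(B_Q)^2$ becomes $\ell(Q)^{-n-2}\sum_{P\subset\lambda\widetilde B_Q}\mu(P)\ell(P)^2$ (this is Lemma~\ref{lem:estimate alpha mu with alpha sigma}). Summed over $Q$, each $P$ appears with geometric weight $(\ell(P)/\ell(Q))^2$, so the total is $\lesssim\mu(\lambda\widetilde B_R)$ with no far-stopping rule needed at all.

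There are also structural differences. The paper runs a \emph{single} tree with root $R$, stopping only on high/low density of $\mu$ and of the projected measures $\nu^e$; there are no $\BA/\BS$ angle stops, no per-tree approximating graphs $\Gamma_{Q_0}$, and no Corona iteration. The density oscillation is not tied to scale-increments $|\mu(B(x,r))/r^n-\mu(B(x,2r))/(2r)^n|$ --- those measure variation of averages across scales, not spatial oscillation \emph{within} a ball, and do not directly control the relevant $W_2$ --- but via Tolsa's mass-transport lemma \cite[Remark~3.14]{tolsa2012mass}, which gives
\[
W_2(\psi_Q g\sigma,\,a\psi_Q\sigma)^2\ \lesssim\ \sum_{P\in\Tree,\,P\subset Q}\lVert\Delta_P^\sigma g\rVert_{L^2(\sigma)}^2\,\ell(P)\,\ell(Q)\ +\ \sum_{S\in\Stop,\,S\subset Q}\ell(S)^2\nu(S),
\]
and these martingale increments sum by orthogonality, not by telescoping. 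Moreover the paper applies this not to $g=d(\mu\mres\Gamma)/d\sigma$ alone but to the density of the \emph{projected} measure $\nu_Q=\mu\mres\Gamma+\sum_{P\in\W_Q}g_P\sigma$, so the density and off-graph pieces are handled in one stroke rather than as separate terms.
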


	\begin{proof}[{Proof of \thmref{thm:necessary condition} using \lemref{lem:general_with_Gamma}}]
		Let $\mu$ be $n$-rectifiable. It is well known that if one replaces Lipschitz images in \eqref{eq:measure concentrated on images} by Lipschitz graphs, or $C^1$ manifolds, the definition of rectifiability remains unchanged (see e.g. \cite[Theorem 15.21]{mattila1999geometry}). Each $C^1$ manifold is contained in a countable union of (possibly rotated) Lipschitz graphs $\Gamma$ with $\lip(\Gamma)\le 1$. Hence, there exists a countable family of $n$-dimensional $1$-Lipschitz graphs $\Gamma_i$ such that
		\begin{equation*}
		\mu\big(\R^d\setminus \bigcup_i\Gamma_i\big)=0.
		\end{equation*}
		Each $\Gamma_i$ is a countable union of dyadic $\Gamma_i$-cubes $R_i^j\in \mathcal{D}_{\Gamma_i}$ satisfying $\ell(R_i^j)=1$. Clearly, $\mu(\R^d\setminus \bigcup_{i,j}R_i^j)=0$. 
		
		Now, denote the set of $x$ where \eqref{eq:alpha2 square function} does \emph{not} hold by $\mathcal{B}$, and suppose that $\mu(\mathcal{B})>0$. Then, there exists $R_i^j$ such that $\mu(\mathcal{B}\cap R_i^j)>0$. Let $\varepsilon>0$ be such that $\mu(\mathcal{B}\cap R_i^j)>2\varepsilon\mu(R_i^j)$. Applying \lemref{lem:general_with_Gamma} to $R_i^j$ and $\varepsilon$ as above we reach a contradiction. Thus, $\mu(\mathcal{B})=0$.
	\end{proof}
	The rest of the article is dedicated to proving \lemref{lem:general_with_Gamma}. Let us give a brief outline of the proof.
	
	We introduce the necessary tools in Section \ref{sec:dyadic}. In Section \ref{sec:estimates of aplha2} we show various estimates of $\alpha_2$ coefficients, usually relying heavily on the results from \cite{tolsa2012mass}. In Section \ref{sec:approximating measures nuQ} we define a family of measures $\{\nu_Q\}_{Q\in\DG}$, where $\nu_Q\ll \Hn{\Gamma}$, and each $\nu_Q$  approximates $\mu$ in some ball around $Q$. Roughly speaking, $\nu_Q$ is defined by projecting the measure of Whitney cubes onto the graph $\Gamma$ -- but only those Whitney cubes whose sidelength is not much bigger than $\ell(Q)$. Then, we construct a tree of good cubes satisfying
	\begin{equation*}
	\sum_{\substack{Q\in \T}}\alpha_{\nu_Q,2}(\widetilde{B}_Q)^2 \ell(Q)^n <\infty,
	\end{equation*}
	where $\widetilde{B}_Q$ are balls with the same center as the corresponding cube $Q$. The stopping region of the tree of good cubes is small. In Section \ref{sec:pass from approximating to mu} we use the estimate above to show that actually
	\begin{equation*}
	\sum_{\substack{Q\in \T}} \alpha_{\mu,2}(\widetilde{B}_Q)^2 \ell(Q)^n <\infty.
	\end{equation*}
	Using the inequality above, we prove \eqref{eq:alpha square function on R'} with $R'=R\setminus\bigcup_{Q\in\Stop(\Tree)}Q$. This finishes the proof of \lemref{lem:general_with_Gamma}.
	
	\subsection*{Acknowledgements} The author would like to thank Xavier Tolsa for all his help and guidance. He acknowledges the support of the Spanish Ministry of Economy and Competitiveness, through the María de Maeztu Programme for Units of Excellence in R\&D (MDM-2014-0445), and also partial support from 2017-SGR-0395 (Catalonia) and MTM-2016-77635-P (MINECO, Spain).
\section{Preliminaries}\label{sec:dyadic}

\subsection{Notation}\label{subsec:notation}
Throughout the paper we will write $A\lesssim B$ whenever $A\le CB$ for some constant $C$, the so-called ``implicit constant''. All such implicit constants may depend on dimensions $n, d$, and we will not track this dependence. If the implicit constant depends also on some other parameter $t$, we will write $A\lesssim_{t} B$. The notation $A\approx B$ means $A\lesssim B\lesssim A$, and $A\approx_t B$ means $A\lesssim_t B\lesssim_t A$. Moreover, if symbols $\lesssim$ or $\approx$ appear in the assumptions of a lemma, then the implicit constant of the proven estimate will depend on the implicit constants from the assumptions (see \lemref{lem:alpha2 controls alpha and beta2} for example).

We denote by $B(z,r)\subset\R^d$ an open ball with center at $z\in\R^d$ and radius $r>0$. Given a ball $B$, its center and radius are denoted by $z(B)$ and $r(B)$, respectively. If $\lambda>0$, then $\lambda B$ is defined as a ball centered at $z(B)$ of radius $\lambda r(B)$.

%	For a ball $B$, we define the $n$-dimensional density of $\mu$ at $B$ as
%	\begin{equation*}
%	\Theta_{\mu}(B) = \frac{\mu(B)}{r(B)^n}.
%	\end{equation*}

Given two $n$-planes $L_1, L_2$, let $L_1'$ and $L_2'$ be the respective parallel $n$-planes passing through $0$. Then,
\begin{equation*}
\measuredangle(L_1,L_2)=\dist_H(L_1'\cap B(0,1),\ L_2'\cap B(0,1)),
\end{equation*}
where $\dist_H$ stands for Hausdorff distance between two sets. $\measuredangle(L_1,L_2)$ can be seen as a ``sine of the angle between $L_1$ and $L_2$,'' and we always have $\measuredangle(L_1,L_2)\in [0,1]$.

Given an $n$-plane $L$, we will denote the orthogonal projection onto $L$ by $\Pi_L$. 
%The orthogonal projection onto $L^{\perp}$ will be denoted by $\Pi^{\perp}_L$.

For a Borel measure $\nu$ on $\R^d$ and a Borel map $T:\R^d\to \R^d$, we denote by $T_*\nu$ the pushforward of $\nu$, that is, a measure on $\R^d$ such that for all Borel $A\subset\R^d$
\begin{equation*}
T_*\nu(A) = \nu(T^{-1}(A)).
\end{equation*}

In expressions of the form $W_p(\mu_1,a\mu_2)$, the letter $a$ will always mean the unique constant for which the total mass of $a\mu_2$ is equal to that of $\mu_1$. In other words,
\begin{equation*}
a=\frac{\mu_1(\R^d)}{\mu_2(\R^d)}.
\end{equation*}
It may happen that $a$ appears in the same line several times, and every time refers to a different quantity. We hope that this will not cause too much confusion.

Let us once and for all fix measure $\mu$, an $n$-dimensional $1$-Lipschitz graph $\Gamma$, and a small constant $0<\varepsilon\ll 1$ for which we are proving \lemref{lem:general_with_Gamma}. We fix also a coordinate system such that $\Gamma = \{(x,A(x)) : x\in\R^{n}\}\subset \R^d,$ where $A:\R^n\rightarrow\R^{d-n}$ is a $1$-Lipschitz map.

We will denote by $L_0$ the subspace of $\R^d$ formed by the points whose last $d-n$ coordinates are zeros, so that $\Gamma$ is a graph over $L_0$. 
We will write $\Pi_0$ and $\Pi_{\Gamma}$ to denote projections onto $L_0$ and $\Gamma,$ respectively, orthogonal to $L_0$. 
%For any set $G\subset\R^d$ we set $\H^n_G$ as the restriction  of $\H^n$ to $G$.
For the sake of convenience, instead of dealing with the usual surface measure on $\Gamma$ we will work with
\begin{equation*}
	\sigma=(\Pi_{\Gamma})_*\Hn{L_0},
\end{equation*} which is comparable to $\HG$. 

Given a ball $B\subset\R^d$ centered at $\Gamma$ denote by $L_B$ an $n$-plane minimizing $\alpha_{\sigma,2}(B)$ (note that for an open ball $B$, it could happen that $L_B\cap B=\varnothing$).  Concerning the existence of minimizers, it follows easily from the fact that $W_2$ metrizes weak convergence of measures (see e.g. \cite[Theorem 6.9]{villani2008optimal}), from good compactness properties of weak convergence, and from the fact that the minimizing sequence is of the special form $\varphi_B a_{B,L_k}\Hn{L_k}$. There may be more than one minimizing plane; if that happens, we simply choose one of them. 

For any Radon measure $\nu$ such that $\nu(B)>0$ we set
\begin{equation*}
	\widehat{\alpha}_{\nu,2}(B) = \alpha_{\nu,2,L_B}(B).
\end{equation*}
Clearly, $\widehat{\alpha}_{\nu,2}(B)\ge\alpha_{\nu,2}(B)$. We will show that
\begin{equation}\label{eq:stronger alpha square function}
	\int_{R'}\int_0^{1}\widehat{\alpha}_{\mu,2}(x,r)^2\ \frac{dr}{r}\ d\mu(x)<\infty,
\end{equation}
which implies \eqref{eq:alpha square function on R'}.

\subsection{\texorpdfstring{$\Gamma$}{Gamma}-cubes}\label{subsec:Gamma cubes}
We denote by $\D_{\R^n}, \D_{\R^d}$ the dyadic lattices on $L_0$ and $\R^d$, respectively. We assume the cubes to be half open-closed, i.e. of the form
\begin{equation*}
Q=\bigg[\frac{k_1}{2^{j}}, \frac{k_1+1}{2^j} \bigg)\times \dots \times\bigg[\frac{k_i}{2^{j}}, \frac{k_i+1}{2^j} \bigg),
\end{equation*}
where $i=n$ for $\D_{\R^n}$, $i=d$ for $\D_{\R^d}$, and $k_1,\ \dots,\ k_i,\ j,$ are arbitrary integers. The sidelength of $Q$ as above will be denoted by $\ell(Q)=2^{-j}$.

The dyadic lattice on $\Gamma$ is defined as
\begin{equation}\label{eq:definition of DG}
\DG = \{\Pi_{\Gamma}(Q_0) : Q_0\in \mathcal{D}_{\R^n} \}.
\end{equation}
The elements of $\DG$ will be called $\Gamma$-cubes, or just cubes. For every $Q\in \DG$ and the corresponding $Q_0\in \mathcal{D}_{\R^n}$ we define the sidelength of $Q$ as $\ell(Q)= \ell(Q_0)$, and the center of $Q$ as $z_Q=\Pi_{\Gamma}(z_{Q_0}), $ where $z_{Q_0}$ is the center of $Q_0$. We set 
\begin{gather*}
B_Q= B(z_Q,3\diam(Q)),\\
\widetilde{B}_Q= \Lambda B_Q,
\end{gather*}
where $\Lambda=\Lambda(n)>1$ is a constant fixed during the proof. We define also
\begin{gather*}
\varphi_Q = \varphi_{B_Q},\\
L_Q=L_{B_Q},\\
%\widehat{\alpha}_{\mu,2}(Q) = \widehat{\alpha}_{\mu,2}(B_Q),\\
V(Q) = \{x\in\R^d: \Pi_{\Gamma}(x)\in Q\}.
\end{gather*}
Recall that $L_{B_Q}$ is the $n$-plane minimizing $\alpha_{\sigma,2}(B_Q)$, and that $\varphi_{B_Q}$ was defined in \eqref{eq:def of varphi B}. The ``$V$'' in $V(Q)$ stands for ``vertical'', since $V(Q)$ is a sort of vertical cube.
Note also that $Q\subset B_Q\subset \widetilde{B}_Q$ and $r(B_Q)\approx \ell(Q)$.

Given $P\in\DG$, we will write $\DG(P)$ to denote the family of $Q\in\DG$ such that $Q\subset P$.

\begin{remark}\label{remark:mu compactly supported}
	Let us fix $R\in \DG$ with $\ell(R)=1$ for which we are proving \lemref{lem:general_with_Gamma}. Note that for $x\in R$ and $0<r<1$ computing $\alpha_{\mu,2}(x,r)$ involves only $\restr{\mu}{B}$, where $B$ is some ball containing $R$. Thus, when proving \eqref{eq:stronger alpha square function}, we may and will assume that $\mu$ is a finite, compactly supported measure.
\end{remark}

%\subsection*{Translated lattices}
For every $e\in \{0,1\}^n$ consider the translated dyadic grid on $L_0$
\begin{equation*}
\mathcal{D}^e_{\R^n} = \frac{1}{3}(e,0\dots,0) + \mathcal{D}_{\R^n},
\end{equation*}
and the corresponding translated dyadic grid on $\Gamma$
\begin{equation*}
\DGe = \{\Pi_{\Gamma}(Q) : Q\in \mathcal{D}^e_{\R^n} \}.
\end{equation*}
Let us also define the translated dyadic lattice on $\R^d$
\begin{equation*}
\mathcal{D}^e_{\R^d} = \frac{1}{3}(e,0,\dots,0) + \mathcal{D}_{\R^d}.
\end{equation*}
The union of all translated dyadic grids on $\Gamma$ will be called an extended grid on $\Gamma$:
\begin{equation*}
\widetilde{\D}_{\Gamma} = \bigcup_{e\in\{0,1\}^n}\DGe.
\end{equation*}
For each $Q\in \EDG$ we define $B_Q,\ \varphi_Q$ etc. in the same way as for $Q\in\DG$. 

The main reason for introducing the extended grid is to use a variant of the well-known one-third trick, which was already used in this context by Okikiolu \cite{okikiolu1992characterization}.

\begin{lemma}\label{lem:Qtilde_corresp_to_Q}
	There exists $k_0=k_0(n,\Lambda)>0$ such that for every $Q\in\DG$ with $\ell(Q)\leq 2^{-k_0}$ there exists $P_Q\in \EDG$ satisfying $\ell(P_Q)=2^{k_0}\ell(Q)$ and $3\widetilde{B}_Q\subset V(P_Q)$.
\end{lemma}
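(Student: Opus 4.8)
The plan is to reduce the inclusion $3\widetilde{B}_Q\subset V(P_Q)$ to a statement about dyadic cubes in $L_0$ and then to invoke the one-third trick of Okikiolu coordinate by coordinate. The first observation is that for $P=\Pi_\Gamma(P_0)\in\EDG$ with $P_0\in\D^e_{\R^n}$, the vertical cube $V(P)$ is a vertical prism over $P_0$: since $\Pi_0\circ\Pi_\Gamma=\Pi_0$ and $\Pi_0|_{\Gamma}$ is the inverse of the graph map $\Pi_\Gamma|_{L_0}$, we have $x\in V(P)$ if and only if $\Pi_0(x)\in P_0$. Hence it suffices to produce $e\in\{0,1\}^n$ and a cube $P_0\in\D^e_{\R^n}$ with $\ell(P_0)=2^{k_0}\ell(Q)$ and $\Pi_0(3\widetilde{B}_Q)\subset P_0$: then $P_Q:=\Pi_\Gamma(P_0)\in\DGe\subset\EDG$ has $\ell(P_Q)=2^{k_0}\ell(Q)$ and $3\widetilde{B}_Q\subset V(P_Q)$.

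Next I would estimate the size of $3\widetilde{B}_Q$. Since $\Gamma$ is a $1$-Lipschitz graph over $L_0$, the graph map $\Pi_\Gamma|_{L_0}$ is $\sqrt{2}$-Lipschitz, so $\diam(Q)=\diam(\Pi_\Gamma(Q_0))\le\sqrt{2}\,\diam(Q_0)=\sqrt{2n}\,\ell(Q)$. As $\widetilde{B}_Q=\Lambda B_Q=B(z_Q,3\Lambda\diam(Q))$, this gives $3\widetilde{B}_Q\subset B(z_Q,\rho)$ with $\rho:=9\sqrt{2n}\,\Lambda\,\ell(Q)$; and since $\Pi_0$ is $1$-Lipschitz and $\Pi_0(z_Q)=z_{Q_0}$, we obtain $\Pi_0(3\widetilde{B}_Q)\subset B^n(z_{Q_0},\rho)$, a ball of radius $\rho\approx_{n,\Lambda}\ell(Q_0)$ centered at $z_{Q_0}$.

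It remains to invoke the following version of the one-third trick: \emph{if $0<s\le 1$ with $s$ a power of $2$, $s\ge 6\rho$, and $c\in\R^n$, then $B^n(c,\rho)$ lies in some cube of $\D^e_{\R^n}$ of sidelength $s$ for some $e\in\{0,1\}^n$.} This is checked coordinatewise: in coordinate $i$, if the standard dyadic interval of length $s$ containing $c_i$ does not already contain $(c_i-\rho,c_i+\rho)$, then $c_i$ is within $\rho$ of $s\Z$; since $\dist(\tfrac13,s\Z)=\tfrac s3$ (here one uses that $s$ is a power of $2$ with $s\le 1$, together with $2^\lambda\not\equiv 0\bmod 3$), the point $c_i$ is at distance $\ge\tfrac s3-\rho\ge\rho$ from $\tfrac13+s\Z$, so the shifted interval of length $s$ containing $c_i$ does contain $(c_i-\rho,c_i+\rho)$; taking the product over $i$ of the interval that works (unshifted, or shifted by $\tfrac13$) yields the cube. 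Applying this with $c=z_{Q_0}$ and $s=2^{k_0}\ell(Q)$, the two constraints read $\ell(Q)\le 2^{-k_0}$ (our hypothesis) and $2^{k_0}\ge 54\sqrt{2n}\,\Lambda$; thus fixing $k_0=k_0(n,\Lambda)$ with $2^{k_0}\ge 54\sqrt{2n}\,\Lambda$ concludes the proof, the same $k_0$ serving every admissible $Q$.

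I do not expect a genuine obstacle here --- the argument is a repackaging of the classical one-third trick. The two points that want a little care are the bookkeeping of the geometric constants, so that a single $k_0=k_0(n,\Lambda)$ works for all $Q$, and the elementary arithmetic fact behind the trick, namely $\dist(\tfrac13,2^{-\lambda}\Z)=\tfrac13 2^{-\lambda}$ for every integer $\lambda\ge 0$ --- and this is where the hypothesis $\ell(Q)\le 2^{-k_0}$ is used, since it forces $s=2^{k_0}\ell(Q)\le 1$.
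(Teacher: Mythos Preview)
Your proposal is correct and follows essentially the same approach as the paper: reduce the inclusion $3\widetilde{B}_Q\subset V(P_Q)$ to finding a shifted dyadic cube in $L_0$ containing the ball $B^n(\Pi_0(z_Q),9\Lambda\diam(Q))$, and then invoke the one-third trick. The only difference is expository --- the paper quotes the one-third trick in the form ``$x\in\tfrac{2}{3}P$'' from \cite{lerman2003quantifying}, whereas you supply a self-contained coordinatewise proof with explicit constants.
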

\begin{proof}
	First, we remark that for every $j\geq 0$ and for every $x\in L_0$ there exists $e\in\{0,1\}^n$ and $P\in\D^e_{\R^n} $ with $\ell(P)=2^{-j}$ and $x\in\frac{2}{3}P$. For a nice proof of this fact see \cite[Section 3]{lerman2003quantifying}.
	
	Now, consider the point $\Pi_0(z_Q)$. If we take $P\in\D^e_{\R^n}$ with $\ell(P)=2^{k_0}\ell(Q)$ such that $\Pi_0(z_Q)\in\frac{2}{3}P$, we see that the $n$-dimensional ball $B^n(\Pi_0(z_Q),9\Lambda\diam(Q))$ is contained in $P$ as soon as $\frac{2^{k_0}}{3}\ell(Q) \ge 9\Lambda\diam(Q).$ 
	
	It follows that for $P_Q\in\DGe$ such that $\Pi_0(P_Q)=P$ we have $3\widetilde{B}_Q\subset V(P_Q)$.
\end{proof}

It may happen that the cube $P_Q\in\EDG$ from the lemma above is not unique, so let us just fix one for each $Q\in \DG$. The direction $e\in\{0,1\}^n$ such that $P_Q\in\DGe$ will be denoted by $e(Q)$, and the integer $k$ such that $\ell(P_Q)=2^{k_0}\ell(Q)=2^{-k}$ will be denoted by $k(Q)$.

We will use later on the fact that
\begin{equation}\label{eq:k0 big}
9\diam(Q)\le 2^{k_0}\ell(Q) = 2^{-k(Q)}.
\end{equation}

\subsection{Whitney cubes}
A very useful tool for approximating the measure $\mu$ close to $\Gamma$ are Whitney cubes. For each $e\in  \{0,1\}^n$ we consider the decomposition of $\R^d\setminus\Gamma$ into a family $\mathcal{W}^e$ of Whitney dyadic cubes from  $\D^e_{\R^d}$. That is, the elements of $\W^e\subset \D^e_{\R^d}$ are pairwise disjoint, their union equals $\R^d\setminus\Gamma$, and there exist dimensional constants $K>20, D_0\ge 1$ such that for every $Q\in\W^e$
\begin{enumerate}
	\item[a)] $10Q\subset\R^d\setminus\Gamma$,
	\item[b)] $KQ\cap\Gamma\ne \varnothing$,
	\item[c)] there are at most $D_0$ cubes $Q'\in\W^e$ such that $10Q\cap10Q'\ne \varnothing $. Furthermore, for such cubes $Q'$ we have $\ell(Q')\approx \ell(Q)$.
\end{enumerate}
For the proof see \cite[Chapter VI, \S 1]{stein1970singular} or \cite[Appendix J]{grafakos2008classical}. Moreover, it is not difficult to construct Whitney cubes in such a way that if $y\in\Gamma,\ Q\in\W^e$ and $B(y,r)\cap Q\ne \varnothing $, then
\begin{equation}\label{eq:property of Whitney cubes}
\begin{gathered}
\diam(Q)\leq r,\\
Q\subset B(y,3r),
\end{gathered}
\end{equation}
see \cite[Section 2.3]{tolsa2015characterization} for details.
We set 
\begin{equation*}
\W_{k}^e = \{ Q\in\W^e : \ell(Q)\leq 2^{-k}\},
\end{equation*}
and also, for every $Q\in\DG$ satisfying $\ell(Q)\le 2^{-k_0}$,
\begin{equation*}
\W_Q = \W_{k(Q)}^{e(Q)}.
\end{equation*}
\begin{remark}
	It follows immediately from the definition of $k(Q)$ that if $P\in\W_Q$, then
	\begin{equation*}
	\ell(P)\le 2^{-k(Q)} = 2^{k_0}\ell(Q).
	\end{equation*}
\end{remark}

\subsection{Constants and Parameters}\label{subsec:constants}
For reader's convenience, we collect here all the constants that appear in the proof. We indicate what depends on what, and when each constant gets fixed.

Recall that measure $\mu$ and Lipschitz graph $\Gamma$ were fixed at the very beginning, in Subsection \ref{subsec:notation}, and also that $\lip(\Gamma)\le 1$. Moreover, in \remref{remark:mu compactly supported} we fixed $R\in\D_{\Gamma}$ with $\ell(R)=1$, and without loss of generality we assumed that $\mu$ is finite and compactly supported.
\begin{itemize}
	\item $0<\varepsilon\ll 1$ is a constant from the assumptions of \lemref{lem:general_with_Gamma} and it was fixed in Subsection \ref{subsec:notation},
	\item $\Lambda$ is an absolute constant from the definition of $\widetilde{B}_Q=\Lambda B_Q$, it is fixed in \eqref{eq:fixing Lambda} (actually, one can take $\Lambda=9\sqrt{2}$),
	\item $k_0=k_0(n,\Lambda)$ is an integer from \lemref{lem:Qtilde_corresp_to_Q},
	\item $\varepsilon_0=\varepsilon_0(n)$ is the constant from \lemref{lem:orthogonal planes L},
	\item $K$ and $D_0$ are dimensional constants from the definition of Whitney cubes,
	\item $\lambda=\lambda(k_0,K,n,d)>3$ is fixed in \lemref{lem:estimate alpha mu with alpha sigma}, more precisely in equation \eqref{eq:support of muQ} (one can choose e.g. $\lambda=C(n,d)\,K\,2^{k_0}$),
	\item $M=M(\varepsilon,\lambda,\Lambda,n,d,\mu)\gg 1$ is chosen in \lemref{lem:fixing M}.
\end{itemize}

%	A subfamily of dyadic cubes $T\subset D_{\Gamma}(R)$ is called a tree with root $R$ if for every $Q\ne R$ belonging to $T$, the parent of $Q$ also belongs to $T$.

%	\section{Proof of Theorem \ref{thm:general_case} on Lipschitz graphs}
%	In this section we prove Theorem \ref{thm:general_case} in the special case of measures supported on Lipschitz graphs.
%	\begin{theorem}
%		Let $\Gamma\subset\R^d$ be an $n$-dimensional Lipschitz graph, $\mu=g\sigma$ for some $g\in L^1(\restr{\H^n}{\Gamma})$. Then for $\mu$-a.e. $x\in\Gamma$ we have
%		\begin{equation*}
%		\int_0^{\infty}\alpha_{\mu,2}(x,r)^2 \frac{dr}{r}.
%		\end{equation*}
%	\end{theorem}
%	
%	\begin{lemma}
%		Let $\Gamma\subset\R^d$ be an $n$-dimensional Lipschitz graph, $\mu=g\sigma$ for some $g\in L^1(\restr{\H^n}{\Gamma})$, $R\in D_{\Gamma}$, and $\varepsilon>0$. Then there exist a constant $M>0$ and a tree $T\subset D_{\Gamma}(R)$ with root $R$ such that the stopping region is small
%		\begin{equation}
%		\mu(\bigcup \mathrm{Stop}(T))<\varepsilon,
%		\end{equation}
%		we control the measure of $Q\in T$ 
%		\begin{gather}
%		\mu(B_Q)\le M\ell(Q)^n,\\
%		\mu(Q)\ge \frac{1}{M} \ell(Q)^n,
%		\end{gather}
%		and $\alpha_{\mu,2}(Q)$ satisfy the packing condition
%		\begin{equation}
%		\sum_{Q\in T}\amu(Q)^2\ell(Q)^n\lesssim \ell(R)^n.
%		\end{equation}
%	\end{lemma}
%\subsection*{Approximating measures}

\section{Estimates of \texorpdfstring{$\alpha_2$}{alpha\_2} Coefficients}\label{sec:estimates of aplha2}
%We say that an $n$-plane $L$ is $\varepsilon$-orthogonal to another $n$-plane $L'$ if there exists a distinct pair of points $x,y\in L$ such that for every distinct $w,z\in L'$ we have $|\langle \frac{x-y}{\lvert x-y\rvert}, \frac{w-z}{\lvert w-z\rvert}\rangle| < \varepsilon.$ If planes $L, L'$ are not $\varepsilon$-orthogonal, we will say that they are $\varepsilon$-far from being orthogonal.

We begin by showing the relationship between $b\beta_2$ and $\alpha_2$ coefficients.
\begin{lemma}\label{lem:alpha2 controls alpha and beta2}
	Suppose that $\nu$ is a Radon measure, $B$ is a ball satisfying $\nu(B)\approx r(B)^n$, and $L$ is a plane minimizing $\alpha_{\nu,2}(B)$. Then
	\begin{equation*}
	b\beta_{\nu,2}(B)^2\lesssim r(B)^{-n-2} \int_B \dist(x,L)^2\ d\nu \lesssim \alpha_{\nu,2}(B).
	\end{equation*}
\end{lemma}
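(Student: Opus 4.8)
The plan is to read a single optimal transport plan in two directions. Fix the plane $L$ achieving $\alpha_{\nu,2}(B)$, write $r=r(B)$, let $a$ be the usual normalizing constant, and let $\pi$ be an optimal plan for $W_2(\varphi_B\nu,a\varphi_B\Hn{L})$. Since $\supp\varphi_B\subset 3B$ and the second marginal of $\pi$ lives on $L$, the plan $\pi$ is concentrated on $3B\times L$; since the minimizer $L$ meets $\overline B$, we have $\dist(x,L)\lesssim r$ for $x\in B$; and since $\int\varphi_B\,d\Hn{L}\approx r^n$ while $\int\varphi_B\,d\nu\ge\nu(B)\approx r^n$, we get $a\gtrsim 1$. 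These facts, together with $\nu(B)\approx r^n$ itself, are the only ways the hypothesis enters.

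For the right-hand inequality, note $\varphi_B\ge\one_B$ (as $\varphi_B\equiv1$ on $2B\supset B$), and that $\dist(x,L)\le|x-y|$ on $\supp\pi$ because $y\in L$ there; hence
\begin{equation*}
\int_B\dist(x,L)^2\,d\nu\le\int\dist(x,L)^2\,d(\varphi_B\nu)=\int\dist(x,L)^2\,d\pi\le\int|x-y|^2\,d\pi=W_2(\varphi_B\nu,a\varphi_B\Hn{L})^2=r^2\nu(B)\,\alpha_{\nu,2}(B)^2.
\end{equation*}
Dividing by $r^{n+2}\approx r^2\nu(B)$ gives $r^{-n-2}\int_B\dist(x,L)^2\,d\nu\lesssim\alpha_{\nu,2}(B)^2$; since the same quantity is also $\lesssim r^{-n}\nu(B)\lesssim1$ (from $\dist(x,L)\lesssim r$ on $B$), it is $\lesssim\min\{\alpha_{\nu,2}(B)^2,1\}\le\alpha_{\nu,2}(B)$. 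For the left-hand inequality, use $L$ as a competitor in the infimum defining $b\beta_{\nu,2}(B)^2$: the first term produced is exactly $r^{-n-2}\int_B\dist(y,L)^2\,d\nu$, i.e. the middle quantity of the lemma, and the second term $r^{-n-2}\int_B\dist(y,\supp\nu)^2\,d\Hn{L}$ is estimated by running $\pi$ backwards: on $\supp\pi$ one has $x\in\supp(\varphi_B\nu)\subset\supp\nu$, so $\dist(y,\supp\nu)\le|x-y|$, and then, using $\varphi_B\ge\one_B$ and that $a\varphi_B\Hn{L}$ is the second marginal of $\pi$,
\begin{equation*}
\int_B\dist(y,\supp\nu)^2\,d\Hn{L}\le\tfrac1a\int\dist(y,\supp\nu)^2\,d(a\varphi_B\Hn{L})=\tfrac1a\int\dist(y,\supp\nu)^2\,d\pi\le\tfrac1a\,W_2(\varphi_B\nu,a\varphi_B\Hn{L})^2\lesssim r^2\nu(B)\,\alpha_{\nu,2}(B)^2,
\end{equation*}
using $a\gtrsim1$. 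Dividing by $r^{n+2}$, the second term is $\lesssim\alpha_{\nu,2}(B)^2$, and assembling it with the middle quantity (itself $\lesssim\alpha_{\nu,2}(B)^2$) and the crude bound $b\beta_{\nu,2}(B)^2\lesssim1$ yields $b\beta_{\nu,2}(B)^2\lesssim\alpha_{\nu,2}(B)$.

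The one genuinely delicate point is the bilateral term $r^{-n-2}\int_B\dist(y,\supp\nu)^2\,d\Hn{L}$. The reverse estimate bounds it by $\alpha_{\nu,2}(B)^2$, which already suffices to close the chain; pinning it down to be $\lesssim$ the middle quantity itself would require knowing that $L$ is genuinely traced out by $\supp\nu$ at the scale of $B$, and this is precisely where the lower mass bound $\nu(B)\gtrsim r^n$ — equivalently $a\gtrsim1$ — is indispensable, since it forces the transport to pay for any $\Hn{L}$-mass lying over portions of $L$ far from $\supp\nu$ (without it, such mass is invisible to the unilateral estimates). Everything else — existence and location of the minimizer $L$, the comparability $\int\varphi_B\,d\Hn{L}\approx r^n$, the $\varphi_B\ge\one_B$ device, and converting $r^2\nu(B)$ into $r^{n+2}$ — is routine bookkeeping.
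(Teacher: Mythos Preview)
Your proof is correct and uses the same idea as the paper: take an optimal plan $\pi$ for $W_2(\varphi_B\nu,\,a\varphi_B\Hn{L})$ and read it along both marginals, using $\varphi_B\ge\one_B$ and $a\gtrsim 1$. Your flagging of the bilateral term is apt: the literal left inequality $b\beta_{\nu,2}(B)^2\lesssim r^{-n-2}\int_B\dist(x,L)^2\,d\nu$ can fail (take $\nu$ supported on $L$ but with gaps, so the middle quantity vanishes while $b\beta$ does not), and the paper's own proof, like yours, actually establishes the two separate bounds $r^{-n-2}\int_B\dist(x,L)^2\,d\nu\lesssim\alpha_{\nu,2}(B)^2$ and $b\beta_{\nu,2}(B)^2\lesssim\alpha_{\nu,2}(B)^2$ --- which is all that is ever used downstream.
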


\begin{proof}
	%The inequality $\beta_{\nu,2}(B)\le b\beta_{\nu,2}(B)$ is trivial. 
	Let $\pi$ be a minimizing transport plan between $\varphi_{{B}}\nu$ and $a_{B,L}\varphi_{{B}}\Hn{L}$ (where $a_{B,L}$ is as in the definition of $\alpha_{\nu,2}(B)$; note that $a_{B,L}\gtrsim 1$ since $\nu(B)\approx r(B)^n$). Then, by the definition of a transport plan, and the fact that $\varphi_B\equiv 1$ on $B$, 
	\begin{multline*}
	\alpha_{\nu,2}(B)^2r(B)^2\nu(B) = \int |x-y|^2\ d\pi(x,y)\\\ge \frac{1}{2}\int_B \dist(x,L)^2\ d\nu + \frac{a_{B,L}}{2}\int_B \dist(y,\supp\nu)^2\ d\Hn{L}
	\gtrsim b\beta_{\nu,2}(B)^2r(B)^{n+2}.
	\end{multline*}
\end{proof}

Recall that $\Gamma$ is an $n$-dimensional $1$-Lipschitz graph that was fixed in Subsection \ref{subsec:notation}, $\sigma=(\Pi_{\Gamma})_*\Hn{L_0}$, and that $L_Q$ is the plane minimizing $\alpha_{\sigma,2}(B_Q)$. The next lemma states that $\Gamma$-cubes $Q$ whose best approximating planes $L_Q$ form big angle with $L_0$ have large $\alpha_2$ numbers. In consequence, there are very few cubes of this kind (in fact, they form a Carleson family).
\begin{lemma}\label{lem:orthogonal planes L}
	There exists $\varepsilon_0=\varepsilon_0(n)>0$ such that for every $Q\in\EDG$ with $\measuredangle(L_Q,L_0)>1-\varepsilon_0$ we have
	\begin{equation*}
	\alpha_{\sigma,2}(B_Q) \gtrsim 1.
	\end{equation*}
\end{lemma}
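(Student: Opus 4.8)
The plan is to push everything down to $L_0$ via the orthogonal projection $\Pi_0$, using that $\sigma=(\Pi_\Gamma)_*\Hn{L_0}$ projects back to Lebesgue measure on $L_0$ (a very ``spread out'' measure), whereas a plane $L_Q$ which is almost perpendicular to $L_0$ gets ``squashed'' by $\Pi_0$ into a thin slab. Set $r:=r(B_Q)\approx\ell(Q)$ and $p:=\Pi_0(z_Q)$, and let
\[
\mu_1:=(\Pi_0)_*(\varphi_{B_Q}\sigma),\qquad \mu_2:=(\Pi_0)_*\big(a_{B_Q,L_Q}\,\varphi_{B_Q}\Hn{L_Q}\big),
\]
which have the same total mass by the defining choice of $a_{B_Q,L_Q}$. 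Since $\Pi_0$ is $1$-Lipschitz, pushing an optimal transport plan for $W_2(\varphi_{B_Q}\sigma,a_{B_Q,L_Q}\varphi_{B_Q}\Hn{L_Q})$ forward under $(\Pi_0,\Pi_0)$ can only decrease the cost, so it suffices to prove $W_2(\mu_1,\mu_2)^2\gtrsim r^{n+2}$; combined with the elementary bound $\sigma(B_Q)\lesssim r^n$ (which follows from $\Pi_0\circ\Pi_\Gamma=\mathrm{id}$ on $L_0$) and the fact that $L_Q$ realizes $\alpha_{\sigma,2}(B_Q)$, this gives $\alpha_{\sigma,2}(B_Q)^2=W_2(\varphi_{B_Q}\sigma,a\varphi_{B_Q}\Hn{L_Q})^2/(r^2\sigma(B_Q))\gtrsim1$.

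To get the lower bound on $W_2(\mu_1,\mu_2)$ I would record two facts. First, since $\Pi_\Gamma|_{L_0}$ is $\sqrt2$-Lipschitz (as $A$ is $1$-Lipschitz) and $\varphi_{B_Q}\equiv1$ on $2B_Q$, a direct computation shows $\mu_1=(\varphi_{B_Q}\circ\Pi_\Gamma)\,\Hn{L_0}$ has density $\le1$ with respect to $\Hn{L_0}$ and agrees with $\Hn{L_0}$ on the $n$-ball $B^n(p,\sqrt2\,r)$; thus $\mu_1$ spreads a mass $\gtrsim r^n$ over a full $n$-dimensional ball. Second, the hypothesis $\measuredangle(L_Q,L_0)>1-\varepsilon_0$ says exactly that the smallest singular value of $\Pi_0|_{L_Q}$ — the cosine of the largest principal angle between (the directions of) $L_Q$ and $L_0$ — is $<\sqrt{2\varepsilon_0}$. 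Since $\supp\varphi_{B_Q}\cap L_Q$ is an $n$-dimensional ball of radius $\lesssim r$ inside $L_Q$, its image $S:=\Pi_0(\supp\varphi_{B_Q}\cap L_Q)$ is an ellipsoid contained in $B^n(p,Cr)$ whose shortest semiaxis is $\lesssim\sqrt{\varepsilon_0}\,r$; in particular $S$ lies in a slab of width $\lesssim\sqrt{\varepsilon_0}\,r$, and $\supp\mu_2\subset S$.

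Now I would finish by a standard ``a fixed fraction of the mass must travel a fixed distance'' argument. Using the slab structure of $S$, for $t\le r$ a volume estimate on $L_0\cong\R^n$ gives $\Hn{L_0}\big(\{u\in B^n(p,\sqrt2\,r):\dist(u,S)<t\}\big)\le C(n)\big(\sqrt{\varepsilon_0}\,r+t\big)r^{n-1}$. Choosing $t=c(n)\,r$ with $c(n)$ small and then $\varepsilon_0=\varepsilon_0(n)$ small enough, this is at most $\tfrac12\Hn{L_0}(B^n(p,\sqrt2\,r))$, so $\mu_1$ places mass $\gtrsim r^n$ on $\{\dist(\cdot,S)\ge t\}$. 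Since $\supp\mu_2\subset S$, every transport plan from $\mu_1$ to $\mu_2$ moves this portion of mass a distance $\ge t$, whence $W_2(\mu_1,\mu_2)^2\gtrsim r^{n}t^2\gtrsim r^{n+2}$, as needed. (The same computation in fact shows $\alpha_{\sigma,2,L}(B_Q)\gtrsim1$ for every $n$-plane $L$ with $\measuredangle(L,L_0)>1-\varepsilon_0$, not only the minimizer $L_Q$.)

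The main obstacle I anticipate is the two pieces of geometric bookkeeping in the second paragraph: justifying the identity $(\Pi_0)_*(\varphi_{B_Q}\sigma)=(\varphi_{B_Q}\circ\Pi_\Gamma)\Hn{L_0}$ together with the precise radius on which $\mu_1$ coincides with $\Hn{L_0}$, and — more delicately — converting the largest-principal-angle hypothesis into uniform control of the width and radius of $S$, handling on equal footing the case where $\Pi_0|_{L_Q}$ is non-injective (so $\Pi_0(L_Q)$ is genuinely lower-dimensional) and the case where it is injective but has a tiny Jacobian. Once these are in place, the reduction via the $1$-Lipschitz projection and the transport lower bound are routine, and the value of $\varepsilon_0=\varepsilon_0(n)$ is pinned down purely by the volume estimate for the tube around a thin slab inside an $n$-ball — which is why it depends only on $n$.
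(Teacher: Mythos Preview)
Your argument is correct and complete, and it is genuinely different from the paper's proof. The paper does not project down to $L_0$ at all; instead it picks $n+1$ reference points $z_Q,x_1,\dots,x_n\in\Gamma\cap\tfrac12B_Q$ whose $\Pi_0$-displacements from $z_Q$ form an orthogonal system, surrounds each by a tiny ball $B_k$ of radius $\eta\,r(B_Q)$, and splits into two cases. If $L_Q$ misses some $B_k$, then Lemma~\ref{lem:alpha2 controls alpha and beta2} (the $\alpha_2\gtrsim\beta_2$ bound) gives $\alpha_{\sigma,2}(B_Q)^2\gtrsim\eta^{n+2}$ directly from the $\sigma$-mass in $\tfrac12B_k$ sitting at distance $\gtrsim\eta r$ from $L_Q$. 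If $L_Q$ meets every $B_k$, then $L_Q$ passes within $\eta r$ of $n+1$ points of $\Gamma$ in general position over $L_0$, which for $\eta$ small forces $\measuredangle(L_Q,L_0)\le 1-\varepsilon_0$ by elementary linear algebra; this contradicts the hypothesis.

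So the paper converts the problem to a $\beta$-number estimate via the already-available Lemma~\ref{lem:alpha2 controls alpha and beta2}, whereas you stay with $W_2$ throughout and exploit its contraction under the $1$-Lipschitz map $\Pi_0$. Your route is more self-contained (it does not invoke Lemma~\ref{lem:alpha2 controls alpha and beta2}) and makes the mechanism transparent: the pushforward of $\sigma$ is full $n$-dimensional Lebesgue measure on a ball, while the pushforward of the flat measure on $L_Q$ lands in a slab of width $\lesssim\sqrt{\varepsilon_0}\,r$, so a definite fraction of the mass must travel a definite distance. The paper's route is a few lines shorter given the ambient lemma, and the ``$n+1$ points'' picture recurs elsewhere when one needs to show that a plane close to many points of a Lipschitz graph cannot be nearly vertical. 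Both arguments pin down $\varepsilon_0$ by a purely $n$-dimensional volume/geometry computation, which is why only the dependence on $n$ survives. Your remark that the same bound holds for \emph{every} plane $L$ with $\measuredangle(L,L_0)>1-\varepsilon_0$, not just the minimizer, is correct and equally true of the paper's argument.
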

\begin{proof}
	Suppose $Q\in\EDG$. Take $x_k\in 0.5B_Q\cap\Gamma,\ k=1,\dots,n,$ such that $|x_k-z_Q|=0.5r(B_Q),$ and the vectors $\{\Pi_0(x_k-z_Q)\}_k$ form an orthogonal basis of $L_0$. Set $B_0=B(z_Q,\eta r(B_Q)),\ B_k=B(x_k,\eta r(B_Q))$, where $\eta=\eta(n)<0.01$ is a small dimensional constant that will be chosen later. Clearly, for all $k=0,\dots,n$  we have $B_k\subset B_Q$.		
	
	If $L_Q$ does not intersect one of the balls, say $B_k$, then by \lemref{lem:alpha2 controls alpha and beta2}
	\begin{equation*}
	\alpha_{\sigma,2}(B_Q)^2 r(B_Q)^{n+2} \gtrsim \int_{B_Q} \dist(x,L_Q)^2\ d\sigma
	\ge\int_{\frac{1}{2}B_k}\dist(x,L_Q)^2\ d\sigma \gtrsim \eta^{n+2}r(B_Q)^{n+2}.
	\end{equation*} 
	
	Now suppose that $L_Q$ intersects all $B_k$. Then, since $B_k$ are all centered at $\Gamma$, $\Gamma$ is $1$-Lipschitz, and $x_k$ were chosen appropriately, it is easy to see that for $\eta=\eta(n)$ and $\varepsilon_0=\varepsilon_0(n)$ small enough we have $\measuredangle(L_Q,L_0)\le 1-\varepsilon_0$.
\end{proof}

The following two lemmas will let us compare $\alpha_2$ coefficients at similar scales, so that we can pass from the integral form of $\alpha_2$ square function \eqref{eq:alpha2 square function} to its dyadic variant.

\begin{lemma}[{\cite[Lemma 5.3]{tolsa2012mass}}]\label{lem:W_2 est Tolsa}
	Let $\nu$ be a finite measure supported inside the ball $B'\subset\R^d$. Let $B\subset\R^d$ be another ball such that $3B\subset B'$, with $r(B)\approx r(B')$ and $\nu(B)\approx \nu(B')\approx r(B)^n$. Let $L$ be an $n$-plane which intersects $B$ and let $f:L\rightarrow[0,1]$ be a function such that $f\equiv 1$ on $3B$, $f\equiv 0$ on $L\setminus B'$. Then
	\begin{equation*}
	W_2(\varphi_B\nu, a\varphi_B\Hn{L})\lesssim W_2(\nu, af\Hn{L}).
	\end{equation*}
\end{lemma}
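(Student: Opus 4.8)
The plan is to build an explicit transport plan from $\varphi_B\nu$ to $a\varphi_B\Hn{L}$ whose cost is controlled by $W_2(\nu, af\Hn{L})$, by composing an optimal (or near-optimal) plan $\pi$ between $\nu$ and $af\Hn{L}$ with two auxiliary transport steps that correct for the difference between the weights $\varphi_B, f$ and $1$. First I would observe that since $f\equiv 1$ on $3B$ and $\varphi_B$ is supported in $3B$ with $\varphi_B\equiv 1$ on $2B$, the two target measures $\varphi_B\nu$ and $af\Hn{L}$ agree with $\nu$, resp.\ $af\Hn{L}$, away from the annulus $3B\setminus 2B$ (roughly speaking), so the ``mass to be rearranged'' lives at scale $r(B)$ near $\partial(3B)$, where distances are $\lesssim r(B)$.

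The key steps, in order, would be: (1) Start from a plan $\pi$ between $\nu$ and $af\Hn{L}$ with $\int|x-y|^2\,d\pi \le 2\,W_2(\nu, af\Hn{L})^2$. (2) Split $\pi$ according to whether the source point lies in $2B$ or not; the part over $2B$ already transports $\varphi_B\nu\restr{2B}=\nu\restr{2B}$ correctly onto a piece of $af\Hn{L}$, but the landing measure must then be adjusted from the $\pi$-image to $a\varphi_B\Hn{L}$ on the relevant region — this adjustment moves mass only within $L\cap 3B$, a set of diameter $\approx r(B)$, and the total mass moved is $\lesssim r(B)^n$, contributing $\lesssim r(B)^{n+2}\approx W_2$-admissible. (3) For the mass with source in $3B\setminus 2B$, the weights $\varphi_B$ and $1$ differ, so we only need to transport the fraction $\varphi_B$ of this mass; the remaining $(1-\varphi_B)$-fraction of $\nu\restr{3B\setminus 2B}$ is simply discarded, but to keep the plan balanced we compensate on the $L$-side, again rearranging $\lesssim r(B)^n$ units of mass over distances $\lesssim r(B)$. (4) Collate the three pieces into a single plan $\tilde\pi$ between $\varphi_B\nu$ and $a\varphi_B\Hn{L}$ and bound $\int|x-y|^2\,d\tilde\pi$ by $\int|x-y|^2\,d\pi + C r(B)^{n+2}$; since all the normalizations force $W_2(\nu,af\Hn{L})\gtrsim$ nothing a priori, one instead notes that the ``correction'' cost $r(B)^{n+2}$ is itself $\lesssim W_2(\varphi_B\nu, a\varphi_B\Hn{L})^2\cdot$(something) — more precisely one should compare directly: the correction is harmless because it can be absorbed, or because the whole estimate is only claimed up to the implicit constant and $r(B)^{n+2}\approx \nu(B)r(B)^2$ is exactly the natural normalization, so dividing through we would actually be proving $\alpha_{\nu,2,L}(B)\lesssim$ the right-hand quantity plus a universal constant — which is not quite the statement.

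The main obstacle, and the point I would be most careful about, is precisely that last accounting: a crude ``rearrange $r(B)^n$ mass over distance $r(B)$'' bound produces an additive term of size $r(B)^{n+2}$ that does \emph{not} obviously get small with $W_2(\nu,af\Hn{L})$, so it cannot simply be added. The correct approach (this is what makes Tolsa's lemma non-trivial) is to route the corrective mass \emph{through} the plan $\pi$ rather than independently: when $\pi$ sends a source point $x\in 3B\setminus 2B$ to some $y\in L$, we transport only the $\varphi_B(x)$-fraction along the $\pi$-coupling and we must create/destroy mass on $L$ in a way that is paid for by $\pi$'s own displacement, using that $\varphi_B$ is Lipschitz with $|\nabla\varphi_B|\lesssim r(B)^{-1}\dist(\cdot,\partial(3B))$ and $\varphi_B\gtrsim r(B)^{-2}\dist(\cdot,\partial(3B))^2$, so the ``missing weight'' $1-\varphi_B$ is itself quadratically small near where mass is lost. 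Concretely, one compares $\varphi_B\nu$ with $\varphi_B(af\Hn{L})$ first (same weight on both sides, plan $\pi$ works verbatim with cost still $\le 2W_2(\nu,af\Hn{L})^2$ after reweighting, using $0\le\varphi_B\le 1$), and then only has to pass from $\varphi_B f\,a\Hn{L}$ to $\varphi_B\, a'\Hn{L}$ where $a'$ is the corrected constant — but $\varphi_B f=\varphi_B$ since $f\equiv 1$ on $\supp\varphi_B\subset 3B$, so in fact the second step is vacuous and $a'=a$ up to the mass already being matched by construction. Thus once one writes things in the right order the proof reduces to: the plan $\pi$, pushed through the reweighting $x\mapsto$ ``keep fraction $\varphi_B(x)$ of the mass at $x$'', is an admissible (sub-)plan between $\varphi_B\nu$ and $\varphi_B\cdot(\pi\text{-image})$, its cost is $\le\int\varphi_B(x)|x-y|^2 d\pi\le\int|x-y|^2 d\pi$, and the $\pi$-image of $\varphi_B\nu$ differs from $a\varphi_B\Hn{L}$ only by a measure supported in $L\cap 3B$ of the same total mass, which is matched by a final rearrangement whose cost is bounded using the first inequality $b\beta$-type estimate of \lemref{lem:alpha2 controls alpha and beta2} applied at scale $r(B)$ — and this is where the hypothesis $\nu(B)\approx\nu(B')\approx r(B)^n$ gets used to keep all densities comparable. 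I would double-check each reweighting preserves the marginal constraints exactly, since that bookkeeping is the only place the argument can go wrong.
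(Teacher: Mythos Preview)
The paper does not give its own proof of this lemma; it is quoted verbatim from \cite[Lemma~5.3]{tolsa2012mass} and used as a black box. So there is no ``paper's proof'' to compare against, only Tolsa's original argument.

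Your overall strategy---take a (near-)optimal plan $\pi$ between $\nu$ and $af\Hn{L}$, reweight it by $\varphi_B$, and then repair the marginals---is the right one, and you correctly isolate the real difficulty: a naive rearrangement on $L$ produces an additive term of order $r(B)^{n+2}$ that does not vanish with $W_2(\nu,af\Hn{L})$. However, your resolution of that difficulty contains a genuine gap. When you push $\varphi_B(x)\,d\pi(x,y)$ to its $y$-marginal $\rho$, the measure $\rho$ is supported on all of $L\cap B'$, not on $L\cap 3B$ as you assert; mass can (and generically will) land outside $3B$. The final ``rearrangement inside $L\cap 3B$'' you invoke therefore does not cover everything, and the appeal to a $b\beta$-type estimate does not supply the missing bound.

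What actually closes the gap is a sharper use of the boundary behaviour of $\varphi_B$ than you state. You quote the \emph{lower} bound $\varphi_B\gtrsim r(B)^{-2}\dist(\cdot,\partial(3B))^2$, but it is the \emph{upper} bound $\varphi_B(x)\lesssim r(B)^{-2}\dist(x,\partial(3B))^2$ that matters: for $x\in 3B$ and $y\notin 3B$ one has $\dist(x,\partial(3B))\le|x-y|$, hence $\varphi_B(x)\lesssim r(B)^{-2}|x-y|^2$, which gives $\rho(L\setminus 3B)\lesssim r(B)^{-2}\int|x-y|^2\,d\pi$, so the escaped mass is already paid for by the $W_2$-cost. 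The remaining comparison on $L\cap 3B$ is handled not by a crude diameter bound but by comparing $\rho$ with the $y$-marginal of $\varphi_B(y)\,d\pi(x,y)$ (which is exactly $a_1\varphi_B\Hn{L}$, since $f\equiv 1$ on $\supp\varphi_B$) and controlling the signed difference via $\int|\varphi_B(x)-\varphi_B(y)|\,d\pi$ together with the gradient estimate $|\nabla\varphi_B|\lesssim r(B)^{-2}\dist(\cdot,\partial(3B))$ and again the upper quadratic bound. Finally one must reconcile the two normalizing constants (the ``$a$'' on each side of the inequality are different), which costs another term of the same type. Your sketch gestures at these ingredients but does not assemble them into a bound free of the spurious additive $r(B)^{n+2}$; this bookkeeping is exactly the content of Tolsa's proof.
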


Recall that $\widehat{\alpha}_{\mu,2}(B) = \alpha_{\mu,2,L_B}(B)$.

\begin{lemma}\label{lem:alpha_on_balls}
	Let $\nu$ be a Radon measure on $\R^d$, $B_1,B_2\subset\R^d$ be balls centered at $\Gamma$ with $3B_1\subset B_2,\ r(B_1)\approx r(B_2),\ \nu(B_1)\approx \nu(3B_2)\approx r(B_2)^n$. 
	%		Then for any $n$-plane $L$ intersecting $3B_1$ we have
	%		\begin{equation}
	%		W_2(\varphi_{B_1}\nu,a\varphi_{B_1}\Hn{L})\lesssim W_2(\varphi_2\nu,a\varphi_2\Hn{L}).
	%		\end{equation}
	Then we have
	\begin{gather}
	\widehat{\alpha}_{\nu,2}(B_1)\lesssim \widehat{\alpha}_{\nu,2}(B_2) + \alpha_{\sigma,2}(B_2).
	\end{gather}
\end{lemma}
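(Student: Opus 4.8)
The plan is to compare $\widehat{\alpha}_{\nu,2}(B_1)$ with $\widehat{\alpha}_{\nu,2}(B_2)$ by building a competitor transport plan for the $B_1$-problem out of an optimal plan for the $B_2$-problem, paying the difference with an error coming from the fact that the two problems use different approximating planes (namely $L_{B_1}$ versus $L_{B_2}$), and from the localization from scale $r(B_2)$ down to scale $r(B_1)$. First I would invoke Lemma \ref{lem:W_2 est Tolsa}: since $3B_1 \subset B_2$, $r(B_1)\approx r(B_2)$, and $\nu(B_1)\approx\nu(3B_2)\approx r(B_2)^n$, it suffices to bound $W_2(\nu, af\Hn{L_{B_1}})$ for a suitable cutoff $f$ supported on $3B_2$ with $f\equiv 1$ on $3B_1$ --- morally, to bound $W_2$ between $\nu$ localized near $B_1$ and a flat measure on $L_{B_1}$, but now we are allowed to work at the scale of $B_2$. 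Then by the triangle inequality for $W_2$,
\begin{equation*}
W_2(\nu, af\Hn{L_{B_1}}) \lesssim W_2(\nu, \varphi_{B_2}\nu) + W_2(\varphi_{B_2}\nu, a\varphi_{B_2}\Hn{L_{B_2}}) + W_2(a\varphi_{B_2}\Hn{L_{B_2}}, a'f\Hn{L_{B_1}}),
\end{equation*}
modulo taking care of the (minor) total-mass discrepancies and the transition from $\varphi_{B_2}$ to $f$ --- here each $a$ denotes the appropriate normalizing constant. The middle term is exactly $\widehat{\alpha}_{\nu,2}(B_2)\, r(B_2)\, \nu(B_2)^{1/2} \approx \widehat{\alpha}_{\nu,2}(B_2)\, r(B_1)^{1+n/2}$, which is the first term we want.

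The first term $W_2(\nu, \varphi_{B_2}\nu)$ accounts for the mass of $\nu$ sitting in the annulus between $B_1$-scale and $B_2$-scale where $\varphi_{B_2}<1$; since both measures agree on $3B_1$ and $3B_1\subset B_2$ with comparable radii, this error is $\lesssim r(B_1)^{1+n/2}$ times a quantity controlled by the density hypothesis, and in fact it gets absorbed into $\widehat\alpha_{\nu,2}(B_2)$ once combined with the triangle inequality --- here one uses the AD-type bounds $\nu(B_1)\approx\nu(3B_2)\approx r(B_2)^n$ to see that the "missing mass" is itself controlled by how far $\nu$ is from flat at scale $B_2$. (One standard way to organize this cleanly is: if $\widehat\alpha_{\nu,2}(B_2)$ is bigger than a small dimensional constant there is nothing to prove, since the left side is always $\lesssim 1$; otherwise $\varphi_{B_2}\nu$ is quantitatively close to a flat measure and the localization error is easily dominated.) The third term, $W_2$ between two flat measures $a\varphi_{B_2}\Hn{L_{B_2}}$ and $a'f\Hn{L_{B_1}}$ both of mass $\approx r(B_2)^n$ and both "living near $B_2$", is controlled by the angle $\measuredangle(L_{B_1},L_{B_2})$ times $r(B_2)$ times $r(B_2)^{n/2}$, plus a term from the density normalizations $|a-a'|$; this is a direct computation with the explicit map $\Pi_{L_{B_1}}$ restricted to $L_{B_2}\cap 3B_2$.

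The remaining --- and main --- point is to bound $\measuredangle(L_{B_1},L_{B_2})$ by $\alpha_{\sigma,2}(B_2)$ (plus, harmlessly, $\widehat\alpha_{\nu,2}(B_1)+\widehat\alpha_{\nu,2}(B_2)$, which can be absorbed when small). The idea is that $L_{B_1}$ and $L_{B_2}$ are both near-minimizers for the flatness of $\sigma$ at their respective scales: $L_{B_2}$ minimizes $\alpha_{\sigma,2}(B_2)$ by definition, and $L_{B_1}$ minimizes $\alpha_{\sigma,2}(B_1)$, while at scale $B_1\subset B_2$ the measure $\sigma$ (being the flat-graph pushforward $(\Pi_\Gamma)_*\Hn{L_0}$, hence AD-regular) is flat to within $\alpha_{\sigma,2}(B_1)\lesssim\alpha_{\sigma,2}(B_2)$ by Lemma \ref{lem:W_2 est Tolsa} / the monotonicity-type estimate used to pass between scales. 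Two $n$-planes that both approximate the \emph{same} AD-regular measure well in the ball $B_2$ (one directly, one after restriction to $B_1$ and then re-expansion) must be close: if they made a large angle, then on a ball of radius $\approx\eta r(B_2)$ centered at $\Gamma$ and away from $L_{B_1}$ we would get $\int \dist(x,L_{B_1})^2\,d\sigma \gtrsim \eta^{n+2}r(B_2)^{n+2}$, contradicting (via Lemma \ref{lem:alpha2 controls alpha and beta2}) the smallness of $\alpha_{\sigma,2}(B_1)$ --- this is the same geometric mechanism as in the proof of Lemma \ref{lem:orthogonal planes L}. So $\measuredangle(L_{B_1},L_{B_2})\lesssim \alpha_{\sigma,2}(B_1)+\alpha_{\sigma,2}(B_2)\lesssim\alpha_{\sigma,2}(B_2)$. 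Feeding this back, all three terms are bounded by $\bigl(\widehat\alpha_{\nu,2}(B_2)+\alpha_{\sigma,2}(B_2)\bigr)r(B_1)^{1+n/2}$, and dividing by $r(B_1)\nu(B_1)^{1/2}\approx r(B_1)^{1+n/2}$ gives the claim. The main obstacle is precisely this angle comparison --- one has to be careful that $L_{B_1}$ is compared to $\sigma$ at the \emph{right} scale and that the density comparisons $\sigma(B_1)\approx\sigma(B_2)\approx r(B_2)^n$ (automatic since $\sigma$ is AD-regular) are in force so that Lemma \ref{lem:alpha2 controls alpha and beta2} applies.
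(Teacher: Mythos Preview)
Your strategy is sound and would work, but it takes a longer route than the paper's proof. The key difference is how you change from the plane $L_{B_1}$ to the plane $L_{B_2}$.

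You pass from $L_{B_1}$ to $L_{B_2}$ \emph{geometrically}, by estimating $\measuredangle(L_{B_1},L_{B_2})\lesssim\alpha_{\sigma,2}(B_2)$ and then bounding $W_2$ between two flat measures on nearby planes. This is correct (both planes are $\beta_2$-close to the AD-regular measure $\sigma$ in $B_1$, so they must be close to each other), but it requires a separate angle lemma and an explicit transport between flat measures.

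The paper avoids the angle estimate entirely by routing through $\sigma$ at the small scale. Concretely: by the triangle inequality (and rescaling),
\begin{equation*}
W_2(\varphi_1\nu,a\varphi_1\Hn{L_1})\le W_2(\varphi_1\nu,a\varphi_1\Hn{L_2}) + c\big(W_2(\varphi_1\sigma,a\varphi_1\Hn{L_1})+W_2(\varphi_1\sigma,a\varphi_1\Hn{L_2})\big),
\end{equation*}
and now the \emph{minimality} of $L_1$ for $\alpha_{\sigma,2}(B_1)$ gives $W_2(\varphi_1\sigma,a\varphi_1\Hn{L_1})\le W_2(\varphi_1\sigma,a\varphi_1\Hn{L_2})$ for free. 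Both remaining terms involve only $L_2$, and a direct application of \lemref{lem:W_2 est Tolsa} (with $\varphi_2\nu$ and $\varphi_2\sigma$ as the localized measures) pushes them up to scale $B_2$, giving $\widehat{\alpha}_{\nu,2}(B_2)$ and $\alpha_{\sigma,2}(B_2)$. No comparison of $L_1$ and $L_2$ is ever needed.

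Two minor points on your write-up: (i) \lemref{lem:W_2 est Tolsa} requires the measure to be supported in $B'$, so you must apply it to $\varphi_{B_2}\nu$ rather than to $\nu$ itself --- this eliminates your ``first term'' $W_2(\nu,\varphi_{B_2}\nu)$ altogether and with it the mass-discrepancy hand-waving; (ii) you should check $L_{B_2}\cap B_1\ne\varnothing$ before invoking the lemma, which the paper does by contradiction via \lemref{lem:alpha2 controls alpha and beta2} (if $L_{B_2}$ misses $B_1$ then $\alpha_{\sigma,2}(B_2)\gtrsim 1$, and the conclusion is trivial since $\widehat{\alpha}_{\nu,2}(B_1)\lesssim 1$).
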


\begin{proof}
	We begin by noting that since $\nu(3B_1)\lesssim \nu(B_1)$, we have $\widehat{\alpha}_{\nu,2}(B_1)\lesssim 1$. As a result, it suffices to prove the lemma under the assumption ${\alpha}_{\sigma,2}(B_2)\leq\delta$ for some small constant $\delta>0$ which will be fixed later on. 
	
	For brevity of notation set $\varphi_i = \varphi_{B_i},\ L_i = L_{B_i}$ for  $i=1,2.$ We want to apply \lemref{lem:W_2 est Tolsa} with $ B = B_1,\, B' = 3B_2,\, \nu = 
	%\frac{\int \varphi_2\ d\Hn{L_2}}{\int \varphi_2\ d\nu}
	\varphi_2\nu,\, L=L_2,\, f = \restr{\varphi_2}{L}.$ What needs to be checked is that $B_1\cap L_2\ne \varnothing $. If this intersection were empty, we would have by \lemref{lem:alpha2 controls alpha and beta2}
	\begin{multline*}
	{\alpha}_{\sigma,2}(B_2)^2r(B_2)^{n+2} \gtrsim \int_{B_2} \dist(x,L_2)^2\ d\sigma \ge \int_{B_1} \dist(x,L_2)^2\ d\sigma\\
	\ge \int_{\frac{1}{2}B_1} \frac{1}{2}r(B_1)^2\ d\sigma \approx r(B_1)^{n+2}\approx r(B_2)^{n+2}.
	\end{multline*}
	Thus, if $B_1\cap L_2= \varnothing $, then ${\alpha}_{\sigma,2}(B_2)\gtrsim 1$ and we arrive at a contradiction with ${\alpha}_{\sigma,2}(B_2)\leq\delta$ for $\delta$ small enough.
	
	So the assumptions of \lemref{lem:W_2 est Tolsa} are met and we get
	\begin{equation}\label{eq:using Lemma W_2 Tolsa}
	W_2(\varphi_1\nu,a\varphi_1\Hn{L_2})\lesssim W_2(\varphi_2\nu,a\varphi_2\Hn{L_2}).
	\end{equation}
	
	Similarly, taking $\nu = 
	%\frac{\int \varphi_2\ d\Hn{L}}{\int \varphi_2\ d\sigma}
	\varphi_2\sigma$ and $B = B_1,\, B' = 3B_2,\, L=L_2,\, f=\restr{\varphi_2}{L}$ it follows that 
	\begin{equation}\label{eq:using Lemma W_2 Tolsa again}
	W_2(\varphi_1\sigma,a\varphi_1\Hn{L_2})\lesssim W_2(\varphi_2\sigma,a\varphi_2\Hn{L_2}).
	\end{equation}
	
	Using the triangle inequality, the scaling of $W_2$, the fact that $L_1$ minimizes $\alpha_{\sigma,2}(B_1)$, and the inequalities above, we arrive at
	\begin{multline}\label{eq:ineq W_2 triangle ineq}
	W_2(\varphi_1\nu,a\varphi_1\Hn{L_1})\leq W_2(\varphi_1\nu,a\varphi_1\Hn{L_2}) \\
	+ \left(\frac{\int \varphi_1\ d\nu}{\int \varphi_1\ d\sigma}\right)^{1/2}\left(W_2(\varphi_1\sigma,a\varphi_1\Hn{L_1}) +
	W_2(\varphi_1\sigma,a\varphi_1\Hn{L_2})\right)\\
	\overset{L_1\, \text{minimizer}}{\lesssim}W_2(\varphi_1\nu,a\varphi_1\Hn{L_2}) + \left(\frac{\nu(3B_1)}{r(B_1)^n}\right)^{1/2}W_2(\varphi_1\sigma,a\varphi_1\Hn{L_2})\\
	\lesssim W_2(\varphi_1\nu,a\varphi_1\Hn{L_2}) +W_2(\varphi_1\sigma,a\varphi_1\Hn{L_2})\\
	\overset{\eqref{eq:using Lemma W_2 Tolsa},\eqref{eq:using Lemma W_2 Tolsa again}}{\lesssim} W_2(\varphi_2\nu,a\varphi_2\Hn{L_2}) +W_2(\varphi_2\sigma,a\varphi_2\Hn{L_2}).
	\end{multline}
	Dividing both sides by $r(B_1)^{1+n/2}$ yields
	\begin{equation*}
	\widehat{\alpha}_{\nu,2}(B_1)\lesssim \widehat{\alpha}_{\nu,2}(B_2) + \alpha_{\sigma,2}(B_2).
	\end{equation*}
\end{proof}

For technical reasons we define a modified version of $\alpha_2$ coefficients. For any $Q\in\EDG$ set
\begin{equation*}
\atilde_{\nu,2}(Q) = \begin{cases}
1\quad &\text{if}\ \measuredangle(L_Q,L_0)>1-\varepsilon_0,\\
\ell(Q)^{-(1+\frac{n}{2})}W_2(\psi_Q\nu,a\psi_Q\Hn{L_{Q}}) &\text{otherwise,}
\end{cases}
\end{equation*}
where $\varepsilon_0$ is as in \lemref{lem:orthogonal planes L}, and
\begin{gather*}
\psi_Q = \one_{V(Q)},\\		
a = \frac{\int \psi_Q\ d\nu}{\int \psi_Q\ d\Hn{L_Q}}.
\end{gather*}
Recall that $\sigma=(\Pi_{\Gamma})_*\Hn{L_0}\approx\Hn{\Gamma}$.
\begin{lemma}\label{lem:alphas on cubes}
	Let $\nu\ll\sigma$, $B\subset\R^d$ be a ball, $Q\in\EDG$. Suppose they satisfy $3B\subset V(Q)\cap B_Q,\ r(B)\approx \ell(Q),\ \nu(B)\approx\nu(Q)\approx \ell(Q)^n$. Then
	\begin{equation*}
	\widehat{\alpha}_{\nu,2}(B)\lesssim_{\varepsilon_0}\widetilde{\alpha}_{\nu,2}(Q) + \alpha_{\sigma,2}(B_Q).
	\end{equation*}
\end{lemma}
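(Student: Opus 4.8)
The plan is to reduce $\widehat{\alpha}_{\nu,2}(B)$ to $\widetilde{\alpha}_{\nu,2}(Q)$ by the same $W_2$-comparison machinery used in \lemref{lem:alpha_on_balls}, with the one new twist that the regularized bump $\varphi_Q$ has been replaced by the sharp cutoff $\psi_Q = \one_{V(Q)}$. First I would dispose of the degenerate case: if $\measuredangle(L_Q,L_0) > 1-\varepsilon_0$, then $\widetilde{\alpha}_{\nu,2}(Q) = 1$ by definition and the inequality is trivial since $\widehat{\alpha}_{\nu,2}(B)\lesssim 1$ (because $\nu(3B)\lesssim\nu(B)$, as in the first line of the proof of \lemref{lem:alpha_on_balls}). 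So assume $\measuredangle(L_Q,L_0)\le 1-\varepsilon_0$; by \lemref{lem:orthogonal planes L} and its contrapositive this is the regime where $\alpha_{\sigma,2}(B_Q)$ can be small, and exactly as in \lemref{lem:alpha_on_balls} it suffices to prove the estimate assuming $\alpha_{\sigma,2}(B_Q)\le\delta$ for a small $\delta$ to be fixed. In particular, arguing as there, $B\cap L_Q\ne\varnothing$ (otherwise $\dist(\cdot,L_Q)\gtrsim r(B)$ on $\tfrac12 B$, forcing $\alpha_{\sigma,2}(B_Q)\gtrsim 1$ by \lemref{lem:alpha2 controls alpha and beta2}, a contradiction), so $L_Q$ is an admissible competitor in $\widehat{\alpha}_{\nu,2}(B)$, which equals $\alpha_{\nu,2,L_Q}(B)$ since $L_B = L_{B_Q} = L_Q$ by the definitions of $L_Q$ and $\widehat{\alpha}$.

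The core step is a version of Tolsa's localization \lemref{lem:W_2 est Tolsa} adapted to $\psi_Q$ in place of $\varphi_Q$. Concretely I would show
\begin{equation*}
W_2(\varphi_B\nu, a\varphi_B\Hn{L_Q})\lesssim W_2(\psi_Q\nu, a\psi_Q\Hn{L_Q}),
\end{equation*}
and the analogous inequality with $\nu$ replaced by $\sigma$. To do this, note $\varphi_B\le\one_{3B}\le\psi_Q$ since $3B\subset V(Q)$, so $\varphi_B\nu$ is obtained from $\psi_Q\nu$ by restriction plus a controlled amount of mass redistribution; the difference of masses is $O(\nu(V(Q)\setminus B))=O(\ell(Q)^n)$, comparable to the total mass, and all of this excess mass lies within distance $\lesssim\ell(Q)\approx r(B)$ of $B$. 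One then builds an admissible transport plan between $\varphi_B\nu$ and $a\varphi_B\Hn{L_Q}$ by composing a near-optimal plan for the pair $(\psi_Q\nu, a\psi_Q\Hn{L_Q})$ with cheap local corrections supported in $V(Q)$; each correction moves mass a distance $\lesssim\ell(Q)$, and since $\sigma(B_Q)\approx\ell(Q)^n\approx\nu(B_Q)$ the total cost of the corrections is $\lesssim\ell(Q)^2\cdot\ell(Q)^n = \ell(Q)^{n+2}$. Actually, since the statement only asks for an upper bound by $\widetilde{\alpha}_{\nu,2}(Q)+\alpha_{\sigma,2}(B_Q)$ and not a pure comparison of $W_2$'s, one can afford to be generous: the correction cost is absorbed by $\ell(Q)^{n+2}\alpha_{\sigma,2}(B_Q)^2$ together with the trivial bound $\widehat{\alpha}_{\nu,2}(B)\lesssim 1$ whenever $\alpha_{\sigma,2}(B_Q)$ is bounded below, so in the remaining range $\alpha_{\sigma,2}(B_Q)$ small we genuinely get the localization with a gain. (Alternatively, and more cleanly, one can restrict $\psi_Q$ via a $\varphi$-type bump adapted to $V(Q)$ and invoke \lemref{lem:W_2 est Tolsa} essentially verbatim after intersecting $V(Q)$ with $B_Q$; the hypothesis $3B\subset V(Q)\cap B_Q$ is tailored for precisely this.)

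With the two localization inequalities in hand, I would run the triangle-inequality chain exactly as in the proof of \lemref{lem:alpha_on_balls}: writing $\varphi_1 = \varphi_B$,
\begin{align*}
W_2(\varphi_1\nu, a\varphi_1\Hn{L_B})
&\le W_2(\varphi_1\nu, a\varphi_1\Hn{L_Q})\\
&\qquad + \Big(\tfrac{\int\varphi_1\,d\nu}{\int\varphi_1\,d\sigma}\Big)^{1/2}\big(W_2(\varphi_1\sigma, a\varphi_1\Hn{L_B}) + W_2(\varphi_1\sigma, a\varphi_1\Hn{L_Q})\big),
\end{align*}
and here $L_B = L_Q$ by definition, so the $L_B$-term is zero and this is even simpler than in \lemref{lem:alpha_on_balls} — in fact only the first localization inequality is needed. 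Using $\nu(B)\approx\ell(Q)^n\approx\sigma(B)$ to bound the mass ratio by an absolute constant, then dividing through by $r(B)^{1+n/2}\approx\ell(Q)^{1+n/2}$ and invoking $r(B)\approx\ell(Q)$, one reads off $\widehat{\alpha}_{\nu,2}(B)\lesssim\widetilde{\alpha}_{\nu,2}(Q)$ in the regime $\measuredangle(L_Q,L_0)\le 1-\varepsilon_0$, and combined with the trivial case this gives the claimed $\widehat{\alpha}_{\nu,2}(B)\lesssim_{\varepsilon_0}\widetilde{\alpha}_{\nu,2}(Q)+\alpha_{\sigma,2}(B_Q)$. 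The main obstacle is the first step: justifying the localization from the sharp cutoff $\psi_Q = \one_{V(Q)}$ down to $\varphi_B$, since $\psi_Q$ is not of the regularized form to which \lemref{lem:W_2 est Tolsa} directly applies; controlling the boundary layer $V(Q)\setminus 3B$ and the resulting mass-transfer cost (and checking it is swallowed by $\ell(Q)^{n+2}\alpha_{\sigma,2}(B_Q)^2$ or by the trivial bound) is where the real work lies, though the geometry $3B\subset V(Q)\cap B_Q$ with $r(B)\approx\ell(Q)$ and the AD-regular-type mass bounds make it routine.
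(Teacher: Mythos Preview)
Your overall strategy matches the paper's, but there is a genuine error: you assert that $L_B = L_{B_Q} = L_Q$, and this is false. By definition $L_B$ is the plane minimizing $\alpha_{\sigma,2}(B)$, while $L_Q = L_{B_Q}$ minimizes $\alpha_{\sigma,2}(B_Q)$; since $B$ and $B_Q$ are different balls (you only know $3B\subset B_Q$ and $r(B)\approx r(B_Q)$), these minimizers need not coincide. Hence the ``$L_B$-term'' in your triangle-inequality chain does \emph{not} vanish, and the second localization inequality (for $\sigma$) is genuinely required. The paper uses minimality of $L_B$ to bound $W_2(\varphi_B\sigma,a\varphi_B\Hn{L_B})\le W_2(\varphi_B\sigma,a\varphi_B\Hn{L_Q})$ and then localizes the latter to $W_2(\varphi_Q\sigma,a\varphi_Q\Hn{L_Q})$; this is exactly where the $\alpha_{\sigma,2}(B_Q)$ term in the conclusion comes from in the non-degenerate case, and without it your argument yields the too-strong claim $\widehat{\alpha}_{\nu,2}(B)\lesssim\widetilde{\alpha}_{\nu,2}(Q)$.

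Your concern about the localization step is, conversely, misplaced. \lemref{lem:W_2 est Tolsa} does not require $f$ to be regularized: it only asks for $f:L\to[0,1]$ with $f\equiv 1$ on $3B$ and $f\equiv 0$ on $L\setminus B'$. The sharp cutoff $f=\restr{\psi_Q}{L_Q}$ qualifies directly once you make the key geometric observation you omit: since $\measuredangle(L_Q,L_0)\le 1-\varepsilon_0$, the slab $V(Q)$ meets $L_Q$ in a bounded set, namely $V(Q)\cap L_Q\subset\kappa B_Q$ for some $\kappa=\kappa(\varepsilon_0)$. Taking $B'=\kappa B_Q$, the support condition on the measure side is automatic because $\nu\ll\sigma$ forces $\supp(\psi_Q\nu)\subset V(Q)\cap\Gamma=Q\subset B_Q\subset\kappa B_Q$. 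No boundary-layer correction or separate mass-transfer estimate is needed; the ``real work'' you anticipated is absorbed by the angle hypothesis and the black box \lemref{lem:W_2 est Tolsa}.
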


\begin{proof}
	Since $\nu(B)>0$ and $\supp\nu\subset\Gamma$, we certainly have $\sigma(3B)\approx r(B)^n$. Moreover, our assumptions imply that $\nu(3B)\approx\nu(B)$, and so $\widehat{\alpha}_{\nu,2}(B)\lesssim 1$. Thus, we may argue in the same way as in the beginning of the proof of \lemref{lem:alpha_on_balls} to conclude that, without loss of generality, $L_Q\cap B \ne \varnothing $. Similarly, we may assume that $\measuredangle(L_Q,L_0)\le 1-\varepsilon_0$, because otherwise it would follow from \lemref{lem:orthogonal planes L} that $\alpha_{\sigma,2}(B_Q)$ is big.
	
	Now, since $\measuredangle(L_Q,L_0)\le 1-\varepsilon_0$, we get that $V(Q)\cap L_{Q}\subset \kappa B_Q$ for some constant $\kappa$ depending on $\varepsilon_0$; we may assume $\kappa>10$.
	
	We use \lemref{lem:W_2 est Tolsa} twice, first with $B=B,\ B'=\kappa B_Q,\ \nu=
	%\frac{\int\psi_Q\ d\Hn{L_2}}{\int\psi_Q\ d\nu}
	\psi_Q\nu,\ L=L_{Q},\ f=\restr{\psi_Q}{L}$, and then with $B=B,\ B'=\kappa B_Q,\ \nu = \varphi_Q\sigma,\ L=L_{Q},\ f = \restr{\varphi_Q}{L}$, to obtain
	\begin{gather*}
	W_2(\varphi_B\nu,a\varphi_B\Hn{L_Q})\lesssim_{\kappa} W_2(\psi_Q\nu,a\psi_Q\Hn{L_Q}),\\
	W_2(\varphi_B\sigma,a\varphi_B\Hn{L_Q})\lesssim_{\kappa} W_2(\varphi_Q\sigma,a\varphi_Q\Hn{L_Q}).
	\end{gather*}
	By the triangle inequality, the scaling of $W_2$, the fact that $L_B$ minimizes $\alpha_{\sigma,2}(B)$, and the estimates above we get
	\begin{multline*}
	W_2(\varphi_B\nu,a\varphi_B\Hn{L_B})\le W_2(\varphi_B\nu,a\varphi_B\Hn{L_Q})\\ + \left(\frac{\int \varphi_B\ d\nu}{\int \varphi_B\ d\sigma}\right)^{1/2}\left(W_2(\varphi_B\sigma,a\varphi_B\Hn{L_B}) +
	W_2(\varphi_B\sigma,a\varphi_B\Hn{L_Q})\right)\\
	\lesssim W_2(\varphi_B\nu,a\varphi_B\Hn{L_Q}) + \left(\frac{\nu(3B)}{r(B)^n}\right)^{1/2}W_2(\varphi_B\sigma,a\varphi_B\Hn{L_Q})\\
	\lesssim_{\kappa} W_2(\psi_Q\nu,a\psi_Q\Hn{L_Q}) + W_2(\varphi_Q\sigma,a\varphi_Q\Hn{L_Q}).
	\end{multline*}
	Dividing both sides by $r(B)^{1+n/2}$ yields the desired result.
\end{proof}

We will need an estimate which is a slight modification of {\cite[Lemma 6.2]{tolsa2012mass}}. In order to formulate it, let us introduce the usual martingale difference operator. Recall that if $P\in \DG^e$ for some $e\in\{0,1\}^n$, then $P'\in \DG^e$ is a child of $P$ if $P'\subset P$ and $\ell(P')=\frac{1}{2}\ell(P)$. Children of $P\in \D_{\R^n}^e$ are defined analogously.

Given $g\in L^1_{loc}(\sigma)$ and $P\in \DG^e$ we set 
\begin{equation*}
\Delta^{\sigma}_P g(x) = 	\begin{cases}
\frac{\int_{P'} g\ d\sigma}{\sigma(P')} - \frac{\int_{P} g\ d\sigma}{\sigma(P)}\  &: x\in P',\ P'\ \text{a child of}\ P,\\
0\quad &: x\not\in P.
\end{cases}
\end{equation*}
Given $h\in L^1_{loc}(\Hn{L_0})$ and $P\in \D^e_{\R^n}$ we define analogously $\Delta_P h(x)$:
\begin{equation*}
\Delta_P h(x) = 	\begin{cases}
\frac{\int_{P'} h\ d\H^n}{\ell(P')^n} - \frac{\int_{P} h\ d\H^n}{\ell(P)^n}\  &: x\in P',\ P'\ \text{a child of}\ P,\\
0\quad &: x\not\in P.
\end{cases}
\end{equation*}
Recall that for $g\in L^2(\sigma)$ we have
\begin{equation*}
g = \sum_{P\in\DGe} \Delta^{\sigma}_P g,
\end{equation*}
in the sense of $L^2(\sigma)$, and 
\begin{equation*} 
\lVert g\rVert_{L^2(\sigma)}^2 = \sum_{P\in\DGe}\lVert \Delta^{\sigma}_P g\rVert^2_{L^2(\sigma)},
\end{equation*}
for details see e.g. \cite[Part I]{david1991wavelets} or \cite[Section 5.4.2]{grafakos2008classical}.

Let us introduce also some additional vocabulary. We will say that a family of cubes $\Tree\subset\DG^e$ is a tree with root $R_0$ if it satisfies:
\begin{enumerate}
	\item[(T1)] $R_0\in\Tree$, and for every $Q\in\Tree$ we have $Q\subset R_0$,
	\item[(T2)] for every $Q\in\Tree$ such that $Q\neq R_0$, the parent of $Q$ also belongs to $\Tree$.
\end{enumerate}
By iterating (T2), we can actually see that if $Q\in\Tree$, then all the intermediate cubes $Q\subset P\subset R_0$ also belong to $\Tree$. 

The stopping region of $\Tree$, denoted by $\Stop(\Tree)$, is the family of {all} the cubes $P\in \DGe(R_0)$ satisfying:
\begin{enumerate}
	\item[(S)] $P\not\in\Tree$, but the parent of $P$ belongs to $\Tree$.
\end{enumerate}
It is easy to see that the cubes from $\Stop(\Tree)$ are pairwise disjoint, and that they are maximal descendants of $R_0$ not belonging to $\Tree$. Moreover, for every $x\in R_0$ we have either $x\in P$ for some $P\in\Stop(\Tree)$, or $x\in Q_k$ for a sequence  of cubes $\{Q_k\}_k\subset\Tree$ satisfying $\ell(Q_k)\xrightarrow{k\to\infty}0.$

The following lemma is a modified version of {\cite[Lemma 6.2]{tolsa2012mass}}.
\begin{lemma} \label{lem:estimate alpha tilde from GAFA}
	Let $\nu$ be a Radon measure on $\Gamma$ of the form $\nu = g\sigma$, with $g\in L^1(\sigma),\ 0\le g\le C$ for some $C>1$. Consider a cube $Q\in\EDG$ and a tree $\T$ with root $Q$. Suppose that for all $P\in\T$ we have $C^{-1}\ell(P)^n\le \nu(P)\le C\ell(P)^n$. Then, we have
	\begin{equation}\label{eq:estimate alpha tilde 2}
	\widetilde{\alpha}_{\nu,2}(Q)^2\lesssim_{\varepsilon_0,C} \alpha_{\sigma,2}(B_Q)^2 + \sum_{P\in\T}\lVert \Delta^{\sigma}_P g\rVert ^2_{L^2(\sigma)}\frac{\ell(P)}{\ell(Q)^{n+1}} + \sum_{S\in \Stop(\T)}\frac{\ell(S)^2}{\ell(Q)^{n+2}}\nu(S),
	\end{equation}
	and
	\begin{equation}\label{eq:sum of martingales on tree}
	\sum_{P\in\T}\lVert \Delta^{\sigma}_P g\rVert ^2_{L^2(\sigma)}\le C\lVert g\rVert_{L^1(\sigma)}=C \nu(\Gamma).
	\end{equation}
\end{lemma}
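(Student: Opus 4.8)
The plan is to follow the proof of \cite[Lemma 6.2]{tolsa2012mass}, incorporating two modifications: replacing the weight $\varphi_{B_Q}$ and balls by the vertical cube $V(Q)$ and $\psi_Q=\one_{V(Q)}$, and inserting an extra layer in the transport to absorb the stopping region of $\T$. The easy bound \eqref{eq:sum of martingales on tree} comes first: by orthogonality of the martingale differences recalled above, $\sum_{P\in\EDG}\lVert\Delta^\sigma_P g\rVert^2_{L^2(\sigma)}=\lVert g\rVert^2_{L^2(\sigma)}$, and since $0\le g\le C$ we get $\lVert g\rVert^2_{L^2(\sigma)}=\int g^2\,d\sigma\le C\int g\,d\sigma=C\nu(\Gamma)$; restricting the sum to $P\in\T$ only decreases it. For \eqref{eq:estimate alpha tilde 2} we may first dispose of the case $\measuredangle(L_Q,L_0)>1-\varepsilon_0$: there $\widetilde{\alpha}_{\nu,2}(Q)=1$ by definition while \lemref{lem:orthogonal planes L} gives $\alpha_{\sigma,2}(B_Q)\gtrsim1$, so the right-hand side already dominates. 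Hence we may assume $\measuredangle(L_Q,L_0)\le1-\varepsilon_0$; then $\psi_Q=\one_{V(Q)}$, $\sigma(V(Q)\cap\Gamma)=\sigma(Q)=\ell(Q)^n$, $V(Q)\cap L_Q\subset\kappa B_Q$ for some $\kappa=\kappa(\varepsilon_0)$, and by the density hypothesis $a'=\nu(Q)/\sigma(Q)$ and $a=\nu(Q)/\Hn{L_Q}(V(Q))$ are both $\approx_C1$.

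Let $\nu'=g'\sigma$ be the $\T$-adapted martingale approximation of $\nu$, i.e. $g'=a'\one_Q+\sum_{P\in\T}\Delta^\sigma_Pg$; by the usual telescoping, $g'=\nu(S)/\sigma(S)$ on each $S\in\Stop(\T)$ and $g'=g$ on $Z:=Q\setminus\bigcup_{S}S$, and moreover $0\le g'\le C$ and $g'\approx_C1$ on $Z$ (there $g'=g=\lim_k\nu(Q_k)/\sigma(Q_k)$ along tree cubes $Q_k\ni x$). All four measures $\psi_Q\nu$, $\psi_Q\nu'$, $a'\psi_Q\sigma$, $a\psi_Q\Hn{L_Q}$ have total mass $\nu(Q)$, so the triangle inequality for $W_2$ splits the problem into three pieces. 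The piece $W_2(\psi_Q\nu,\psi_Q\nu')$ is easy: the two measures coincide on $Z$, and on each $S\in\Stop(\T)$ they are supported in $S$ with the same mass $\nu(S)$; combining the trivial plans, $W_2(\psi_Q\nu,\psi_Q\nu')^2\le\sum_{S\in\Stop(\T)}\diam(S)^2\nu(S)\lesssim\sum_S\ell(S)^2\nu(S)$, which after dividing by $\ell(Q)^{n+2}$ is the last term of \eqref{eq:estimate alpha tilde 2}. The piece $W_2(a'\psi_Q\sigma,a\psi_Q\Hn{L_Q})$ compares $\sigma$ to a flat measure: since $L_Q$ minimizes $\alpha_{\sigma,2}(B_Q)$ and the angle is small, this is $\lesssim_{\varepsilon_0}\alpha_{\sigma,2}(B_Q)\,\ell(Q)^{1+n/2}$; passing from the smooth cutoff $\varphi_{B_Q}$ to the sharp cutoff $\psi_Q=\one_{V(Q)}$ is done exactly in the manner of \lemref{lem:W_2 est Tolsa} and \lemref{lem:alphas on cubes} (restricting the minimizing plan, using $\varphi_{B_Q}\equiv1$ on $2B_Q\supset Q$ and Ahlfors regularity of $\sigma$), and dividing by $\ell(Q)^{1+n/2}$ produces the $\alpha_{\sigma,2}(B_Q)^2$ term.

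The heart of the matter is the remaining piece $W_2(\psi_Q\nu',a'\psi_Q\sigma)$, where we reproduce Tolsa's scale-by-scale construction. For $k\ge0$ set $g'_k=a'\one_Q+\sum_{P\in\T:\,\ell(P)>2^{-k}\ell(Q)}\Delta^\sigma_Pg$ and $\nu'_k=g'_k\sigma\restr{V(Q)}$, so that $\nu'_0=a'\psi_Q\sigma$, $\nu'_k\to\psi_Q\nu'$ (in $W_2$, since $\lVert g'_k-g'\rVert_{L^1(\sigma\restr Q)}\to0$ and everything lies in a bounded set), and $\nu'_{k+1}-\nu'_k=\sum_{P\in\T:\,\ell(P)=2^{-k}\ell(Q)}(\Delta^\sigma_Pg)\sigma$; on each such $P\in\T$, $\nu'_k\restr P=\theta_P\sigma\restr P$ has density $\theta_P\approx_C1$, $\nu'_{k+1}\restr P$ has equal mass and density $\le C$, and both are supported in $P$. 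Applying a $W_2$-versus-$\dot H^{-1}$ estimate for densities bounded from above (Loeper's inequality) on $(\Gamma,\sigma)$, which is bi-Lipschitz equivalent to $(\R^n,\H^n)$, together with the dual Poincaré inequality on $\sigma\restr P$, yields $W_2(\nu'_k\restr P,\nu'_{k+1}\restr P)^2\lesssim_C\ell(P)^2\lVert\Delta^\sigma_Pg\rVert^2_{L^2(\sigma)}$, hence $W_2(\nu'_k,\nu'_{k+1})^2\lesssim_C\sum_{P\in\T,\,\ell(P)=2^{-k}\ell(Q)}\ell(P)^2\lVert\Delta^\sigma_Pg\rVert^2_{L^2(\sigma)}$. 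Summing over $k$ via the triangle inequality and Cauchy--Schwarz against the geometric weights $2^{-k/2}$, and using $\ell(P)=2^{-k}\ell(Q)$, gives
$$W_2(\psi_Q\nu',a'\psi_Q\sigma)^2\lesssim_C\Big(\sum_{k\ge0}\big(\textstyle\sum_{\ell(P)=2^{-k}\ell(Q)}\ell(P)^2\lVert\Delta^\sigma_Pg\rVert^2_{L^2(\sigma)}\big)^{1/2}\Big)^2\lesssim\ell(Q)\sum_{P\in\T}\ell(P)\,\lVert\Delta^\sigma_Pg\rVert^2_{L^2(\sigma)}.$$
Dividing by $\ell(Q)^{n+2}$ gives the middle term of \eqref{eq:estimate alpha tilde 2}, and collecting the three pieces finishes the proof.

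The step I expect to be the main obstacle is this last piece, and within it the per-scale estimate $W_2(\nu'_k\restr P,\nu'_{k+1}\restr P)^2\lesssim\ell(P)^2\lVert\Delta^\sigma_Pg\rVert^2_{L^2(\sigma)}$: the target density $\nu'_{k+1}\restr P$ is \emph{not} bounded below (it can be tiny on the stopping children of $P$), so a naive matching of excess to deficit only yields the $L^1$ norm of $\Delta^\sigma_Pg$ to the first power, which is too weak; one really must invoke the sharp $W_2$-versus-negative-Sobolev inequality that needs only an upper bound on the densities, which is available since $g'\le C$. The second, more routine but still delicate point is the sharp-cutoff bookkeeping in the third piece, which must be carried out with some care because $V(Q)\cap L_Q$ need not be contained in $2B_Q$.
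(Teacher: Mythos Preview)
Your proof of \eqref{eq:sum of martingales on tree} and the reduction to $\measuredangle(L_Q,L_0)\le1-\varepsilon_0$ match the paper. For \eqref{eq:estimate alpha tilde 2} your three-way split ($\nu\to\nu'$, $\nu'\to a'\sigma$, $a'\sigma\to a\Hn{L_Q}$) is a legitimate reorganisation, and the first and third pieces are essentially correct.

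The gap is in the middle piece, precisely at the point you yourself flag. You invoke ``Loeper's inequality'' to obtain $W_2^2\lesssim\|\cdot\|_{\dot H^{-1}}^2$ under an \emph{upper} bound on the densities, but Loeper's inequality runs the other way: for $\rho_0,\rho_1\le M$ one has $\|\rho_0-\rho_1\|_{\dot H^{-1}}\le\sqrt{M}\,W_2(\rho_0,\rho_1)$. The reverse bound $W_2\lesssim\|\cdot\|_{\dot H^{-1}}$ requires a \emph{lower} bound on the densities (so that the Benamou--Brenier integrand $|m|^2/\rho_t$ stays controlled), and as you correctly observe, $g'_{k+1}$ may vanish on stopping children of $P$, so the linear interpolant is not bounded below and the argument does not close. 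Thus the per-scale estimate $W_2(\nu'_k\restr P,\nu'_{k+1}\restr P)^2\lesssim\ell(P)^2\|\Delta^\sigma_Pg\|_{L^2}^2$ is not justified by the tool you cite.

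The paper avoids this difficulty altogether with a different decomposition. Rather than working on $\Gamma$ scale by scale, it pushes $\psi_Q\nu$ onto the plane $L=L_Q$ via the oblique projection $\tilde\Pi_L$ (the projection onto $L$ orthogonal to $L_0$); the cost of this push is $\int_Q|x-\tilde\Pi_L(x)|^2\,d\nu\approx_{\varepsilon_0}\int_Q\dist(x,L)^2\,d\nu\lesssim_C\alpha_{\sigma,2}(B_Q)^2\ell(Q)^{n+2}$, which produces the $\alpha_{\sigma,2}$ term. Since $\Pi_0$ restricted to $L\cap V(Q)$ is bilipschitz onto $L_0\cap V(Q)$ (thanks to the angle bound), the remaining piece $W_2(\psi_Q(\tilde\Pi_L)_*\nu,a\psi_Q\sigma_L)$ becomes the \emph{flat} problem $W_2(\psi_Qg_0\Hn{L_0},a\psi_Q\Hn{L_0})$ with $g_0=g\circ\Pi_\Gamma$, and \cite[Remark 3.14]{tolsa2012mass} (stated here as \lemref{lem:remark 3.14 from Tolsa12}) is invoked as a black box, delivering both the martingale sum and the stopping sum in one stroke. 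No $\dot H^{-1}$ comparison is needed because Tolsa's key lemma is proved by an explicit multiscale construction of a transport plan, not via Benamou--Brenier.

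Your argument can be repaired along the same lines: since you already note that $(\Gamma,\sigma)$ is bilipschitz to $(\R^n,\H^n)$ via $\Pi_0$, transfer the middle piece $W_2(\psi_Q\nu',a'\psi_Q\sigma)$ to $L_0$ through this map and then cite \lemref{lem:remark 3.14 from Tolsa12} directly (with the tree $\T$ and its stopping region, the latter giving a harmless duplicate of the stopping term). At that point your proof and the paper's become essentially the same.
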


In the proof we will use \cite[Remark 3.14]{tolsa2012mass}. It can be thought of as a flat counterpart of \lemref{lem:estimate alpha tilde from GAFA} -- it is valid for more general measures $\nu$ (even more general then what we state below), but at the price of assuming $\Gamma=L_0\simeq\R^n$.
\begin{lemma}[simplified {\cite[Remark 3.14]{tolsa2012mass}}]\label{lem:remark 3.14 from Tolsa12}
	Suppose $Q\in\D_{\R^n}$ is a dyadic cube in $\R^n$ and $\T$ is a tree with root $Q$. Consider a measure $\nu=g\Hn{Q}$ such that $\nu(P)\approx \ell(P)^n$ for $P\in\Tree$. Then,
	\begin{equation*}
	W_2(\nu,a\Hn{Q})\lesssim \sum_{P\in\T}\lVert \Delta_P g\rVert ^2_{L^2(\H^n)}\ell(P)\ell(Q) + \sum_{S\in \Stop(\T)}\ell(S)^2\nu(S).
	\end{equation*}
\end{lemma}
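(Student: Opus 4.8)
\emph{[Proof proposal for Lemma~\ref{lem:remark 3.14 from Tolsa12}.]} The plan is to build a transport plan from $\nu$ to $a\Hn{Q}$ by a multiscale (dyadic martingale) construction; I will prove the inequality with $W_2(\nu,a\Hn{Q})^2$ on the left-hand side, which is the form used in Lemma~\ref{lem:estimate alpha tilde from GAFA} and the one consistent with scaling. Write $g_P=\nu(P)/\ell(P)^n$ for $P\in\T\cup\Stop(\T)$, so that $g_P\approx1$ for $P\in\T$ by hypothesis and, since the parent of any $S\in\Stop(\T)$ belongs to $\T$, also $g_S\lesssim1$. Let $F=Q\setminus\bigcup_{S\in\Stop(\T)}S$ and let $\bar E_jg$ be the dyadic martingale of $g$ on $Q$ frozen upon entering a stopping cube: $\bar E_jg=g_P$ on each generation-$j$ cube $P\in\T$ and $\bar E_jg=g_S$ on each $S\in\Stop(\T)$ with $\ell(S)\ge2^{-j}\ell(Q)$. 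Then $\bar E_0g\equiv a$; the limit $\tilde g=\lim_j\bar E_jg$ equals $g_S$ on each $S\in\Stop(\T)$ and, by dyadic Lebesgue differentiation, equals $g$ $\H^n$-a.e.\ on $F$; and $\bar E_{j+1}g-\bar E_jg=\sum\{\Delta_Pg:P\in\T,\ \ell(P)=2^{-j}\ell(Q)\}$, so $\tilde g=a+\sum_{P\in\T}\Delta_Pg$.

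All measures appearing have total mass $\nu(Q)$, so by the triangle inequality for $W_2$ and $(x+y)^2\le2x^2+2y^2$ it is enough to estimate $W_2(g\Hn{Q},\tilde g\Hn{Q})^2$ and $W_2(\tilde g\Hn{Q},a\Hn{Q})^2$ separately; I will use throughout that $W_2^2$ is subadditive, i.e.\ $W_2(\mu_1+\mu_2,\nu_1+\nu_2)^2\le W_2(\mu_1,\nu_1)^2+W_2(\mu_2,\nu_2)^2$ when $\mu_i,\nu_i$ have matching masses (add the optimal plans). For the first term, $g\Hn{Q}$ and $\tilde g\Hn{Q}$ agree on $F$, while on each $S\in\Stop(\T)$ they are $g\Hn{S}$ and $g_S\Hn{S}$, of common mass $\nu(S)$ and supported in $S$; subadditivity over the disjoint pieces $F,\{S\}_S$ and the crude bound $W_2(\cdot,\cdot)^2\le\diam(S)^2\nu(S)$ give
\[
W_2(g\Hn{Q},\tilde g\Hn{Q})^2\lesssim\sum_{S\in\Stop(\T)}\ell(S)^2\nu(S),
\]
which is the second term of the asserted bound.

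For $W_2(\tilde g\Hn{Q},a\Hn{Q})$ I would telescope over generations. The measures $\bar E_{j+1}g\Hn{Q}$ and $\bar E_jg\Hn{Q}$ differ only inside the generation-$j$ cubes $P\in\T$, where they equal $\sum_{P'}g_{P'}\Hn{P'}$ (sum over children $P'$ of $P$) and $g_P\Hn{P}$, both of mass $\nu(P)$; hence by subadditivity over disjoint such $P$ one gets $W_2(\bar E_{j+1}g\Hn{Q},\bar E_jg\Hn{Q})^2\le\sum\{c_P:P\in\T,\ \ell(P)=2^{-j}\ell(Q)\}$, with $c_P=W_2\big(\sum_{P'}g_{P'}\Hn{P'},\,g_P\Hn{P}\big)^2$. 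The crux is the single-cube estimate
\[
c_P\lesssim\ell(P)^2\,\lVert\Delta_Pg\rVert^2_{L^2(\H^n)},
\]
which holds because $g_P\approx1$: it is the dyadic case of the Poincaré-type bound $W_2(\rho_0\Hn{P},\rho_1\Hn{P})^2\lesssim_n\ell(P)^2\lVert\rho_0-\rho_1\rVert^2_{L^2(\Hn{P})}$, valid whenever $\rho_0$ is bounded below (via the Benamou--Brenier formula along the affine interpolation, or Peyre's inequality). The only subtlety is that $\rho_1=\sum_{P'}g_{P'}\one_{P'}$ may vanish on some children $P'\in\Stop(\T)$; there, however, $\lvert g_{P'}-g_P\rvert\approx1$, so the mass attached to those children can first be redistributed inside $P$ by a crude local transport costing $\lesssim\ell(P)^2\sum_{P'}(g_{P'}-g_P)^2\ell(P')^n$, leaving densities comparable to $1$.

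Granting the single-cube estimate, I would take square roots, sum over $j\ge0$, and apply Cauchy--Schwarz with the summable weights $2^{-j}$:
\[
W_2(\tilde g\Hn{Q},a\Hn{Q})^2\le\Big(\sum_{j\ge0}2^{-j}\Big)\sum_{j\ge0}2^j\!\!\!\sum_{\substack{P\in\T\\ \ell(P)=2^{-j}\ell(Q)}}\!\!\!c_P\ \lesssim\ \sum_{P\in\T}\frac{\ell(Q)}{\ell(P)}\,c_P\ \lesssim\ \ell(Q)\sum_{P\in\T}\ell(P)\lVert\Delta_Pg\rVert^2_{L^2(\H^n)},
\]
which is the first term of the bound (the factor $\ell(Q)$ is produced precisely by summing the square roots over scales). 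Adding the two displayed estimates finishes the proof. The hard part is the single-cube inequality for $c_P$: one must verify that its implicit constant is purely dimensional and handle children of very small or zero $\nu$-measure — this is exactly where the $L^2$ norm of the martingale differences is forced, rather than the $L^1$ norm that the naive ``carry each unit of excess across $P$'' plan would deliver, and it is the reason the square function in the conclusion is built from $\lVert\Delta_Pg\rVert^2_{L^2}$.
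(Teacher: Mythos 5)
Your argument is correct in substance, and it supplies something the paper deliberately omits: Lemma~\ref{lem:remark 3.14 from Tolsa12} is not proved here but quoted from \cite[Lemma 3.13 and Remark 3.14]{tolsa2012mass}, with Remark~\ref{rem:different trees} only checking that coherence of the tree is never used in that proof. Your skeleton --- freeze the dyadic martingale of $g$ at the stopping cubes, use subadditivity of $W_2^2$ over pieces of matching mass, telescope over generations, and sum the square roots via Cauchy--Schwarz against the weights $2^{-j}$ (which is exactly what produces the factor $\ell(P)\ell(Q)$) --- is the same multiscale scheme as Tolsa's original argument; where you genuinely diverge is the single-cube estimate $c_P\lesssim \ell(P)^2\lVert\Delta_P g\rVert_{L^2(\H^n)}^2$, which Tolsa obtains by constructing the transport estimates by hand using $g_P\approx 1$, whereas you invoke the Benamou--Brenier/affine-interpolation (Peyre) comparison of $W_2$ with a weighted $\dot H^{-1}$ norm together with the Poincar\'e inequality on $P$. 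That shortcut is legitimate and arguably cleaner; note moreover that if you anchor the weighted $\dot H^{-1}$ norm at the \emph{constant-density} measure $g_P\Hn{P}$ (whose density is bounded below because $P\in\T$), no lower bound on the other density is needed, so your extra ``redistribution'' step for stopping children of small or vanishing mass --- the one genuinely hand-wavy point in the write-up --- can simply be dropped. Two small bookkeeping remarks: to justify the infinite telescoping you should record that $\bar E_j g\,\Hn{Q}\to\tilde g\,\Hn{Q}$ in $W_2$, which follows from the uniform bound $\bar E_j g\lesssim 1$ (from $\nu(P)\approx\ell(P)^n$ on $\T$ and the parent bound on stopping cubes), a.e.\ convergence, and the fixed compact support; and you are right that the left-hand side of the displayed inequality must be read as $W_2(\nu,a\Hn{Q})^2$ --- the missing square is a typo, and the squared form is the one actually used in the proof of Lemma~\ref{lem:estimate alpha tilde from GAFA}, so proving that version is the correct reading.
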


\begin{remark}\label{rem:different trees}
	The definition of a tree of dyadic cubes in \cite[p. 492]{tolsa2012mass} is slightly more restrictive than the one we adopted. Apart from conditions (T1) and (T2), they also satisfy
	\begin{itemize}
		\item[(T3)] if $Q\in\Tree$, then either all the children of $Q$ belong to $\Tree$, or none of them.
	\end{itemize}
	Equivalently, if $Q\in\Tree$, and $Q$ is not the root, then all the brothers of $Q$ also belong to $\Tree$. To underline the difference between the two notions, sometimes the terms \emph{coherent} and \emph{semicoherent} family of cubes are used. The former refers to trees satisfying (T1--T3), the latter to those satisfying (T1--T2).
	
	Nevertheless, \cite[Remark 3.14]{tolsa2012mass} cited above is true for both coherent and semicoherent families of cubes. That is, property (T3) is never used in the proof of either \cite[Remark 3.14]{tolsa2012mass} or the preceding ``key lemma'' \cite[Lemma 3.13]{tolsa2012mass}.
\end{remark}

We are finally ready to prove \lemref{lem:estimate alpha tilde from GAFA}.

\begin{proof}[Proof of \lemref{lem:estimate alpha tilde from GAFA}]
	Let $L=L_Q$. If $\measuredangle(L,L_0)>1-\varepsilon_0$, then by \lemref{lem:orthogonal planes L} and the definition of $\widetilde{\alpha}_{\nu,2}(Q)$
	\begin{equation*}
	\widetilde{\alpha}_{\nu,2}(Q)^2 = 1 \lesssim \alpha_{\sigma,2}(B_Q)^2,
	\end{equation*}
	and we are done. Now assume that $\measuredangle(L,L_0)\le 1-\varepsilon_0$.
	
%	\emph{Step 1.} Assume first that the density $g$ is not only in $L^1(\sigma)$, but that it also satisfies
%	\begin{equation}\label{eq:bounded density}
%	\lVert g\rVert_{L^{\infty}(\sigma)}\le C.
%	\end{equation}
%	
	Let $\tilde{\Pi}_L$ be the projection from $\R^d$ onto $L$, orthogonal to $L_0$. We also consider the flat measure $\sigma_L = (\tilde{\Pi}_L)_*\sigma = (\tilde{\Pi}_L)_*\Hn{L_0} = c_L\Hn{L}$ (recall that $\Pi_{\Gamma}$ is a projection orthogonal to $L_0$, so that $\tilde{\Pi}_L\circ\Pi_{\Gamma} = \tilde{\Pi}_L$). Define $g_0:L_0\rightarrow\R$ as $g_0=g\circ\Pi_{\Gamma}$. 
	
	By triangle inequality
	\begin{equation}\label{eq:estimate W_2 nu with projections}
	W_2(\psi_Q\nu,a\psi_Q\Hn{L}) = W_2(\psi_Q\nu,a\psi_Q\sigma_L)\le W_2(\psi_Q\nu,\psi_Q(\tilde{\Pi}_L)_*\nu) + W_2(\psi_Q(\tilde{\Pi}_L)_*\nu,a\psi_Q\sigma_L).
	\end{equation}
	The first term from the right hand side is estimated by $\alpha_{\sigma,2}(B_Q)$:
	\begin{multline*}
	W_2(\psi_Q\nu,\psi_Q(\tilde{\Pi}_L)_*\nu)^2\le \int_Q|x-\tilde{\Pi}_L(x)|^2\ d\nu(x)\approx_{\varepsilon_0} \int_Q \dist(x,L)^2\ d\nu(x)\\
	{\lesssim}_C \int_Q \dist(x,L)^2\ d\sigma(x)\lesssim \alpha_{\sigma,2}(B_Q)^2\ell(Q)^{n+2}.
	\end{multline*}
	We estimate the second term from the right hand side of \eqref{eq:estimate W_2 nu with projections} using the fact that $\restr{\Pi_0}{L\cap V(Q)} : L\cap V(Q) \rightarrow L_0\cap V(Q)$ is bilipschitz, with a constant depending on $\varepsilon_0$ (because $\measuredangle(L,L_0)\le 1-\varepsilon_0$):
	\begin{multline*}
	W_2(\psi_Q(\tilde{\Pi}_L)_*\nu,a\psi_Q\sigma_L)\approx_{\varepsilon_0} W_2(\psi_Q(\Pi_0)_*((\tilde{\Pi}_L)_*\nu),a\psi_Q(\Pi_0)_*\sigma_L)\\
	= W_2(\psi_Q g_0\Hn{L_0},a\psi_Q\Hn{L_0}).
	\end{multline*}
	By \lemref{lem:remark 3.14 from Tolsa12} we have
	\begin{equation*}
	W_2(\psi_Q g_0\Hn{L_0},a\psi_Q\Hn{L_0})^2 \lesssim \sum_{P'\in\T_{\R^n}}\lVert\Delta_{P'} g_0\rVert^2_{L^2(L_0)}\ell(P')\ell(Q) + \sum_{S\in\Stop(\T)}\ell(S)^2\nu(S),
	\end{equation*}
	where $\T_{\R^n}\subset\D_{\R^n}$ is the tree formed by cubes $P'=\Pi_0(P),\ P\in\T$, and $L^2(L_0)=L^2(\Hn{L_0})$.
	
	Using \eqref{eq:estimate W_2 nu with projections} and the estimates above we get
	\begin{multline*}
	W_2(\psi_Q\nu,a\psi_Q\Hn{L})^2\\
	\lesssim_{\varepsilon_0} \alpha_{\sigma,2}(B_Q)^2\ell(Q)^{n+2} + \sum_{P'\in\T_{\R^n}}\lVert\Delta_{P'} g_0\rVert^2_{L^2(L_0)}\ell(P')\ell(Q) + \sum_{S\in\Stop(\T)}\ell(S)^2\nu(S).
	\end{multline*}
	We conclude the proof of \eqref{eq:estimate alpha tilde 2} by noting that for each $P\in\T$
	\begin{equation*}
	\lVert \Delta^{\sigma}_P g\rVert_{L^2(\sigma)} = \lVert \Delta_{\Pi_0(P)} g_0\rVert_{L^2(L_0)}.
	\end{equation*}
	The estimate \eqref{eq:sum of martingales on tree} follows trivially from the fact that if $e\in\{0,1\}^n$ is such that $Q\in\DG^e$, then
	\begin{equation*}
	\sum_{P\in\T}\lVert \Delta^{\sigma}_P g\rVert ^2_{L^2(\sigma)}\le \sum_{P\in\DG^e}\lVert \Delta^{\sigma}_P g\rVert ^2_{L^2(\sigma)} = \lVert g\rVert^2_{L^2(\sigma)}\le C\lVert g\rVert_{L^1(\sigma)}.
	\end{equation*}
\end{proof}

We would like to use \lemref{lem:estimate alpha tilde from GAFA} also on measures with unbounded density. An approximation argument allows us to get rid of the boundedness assumption, at least if we assume additionally that $\nu(B_P)\le C\ell(P)^n$ for $P\in\Tree$.

\begin{lemma}\label{lem:estimate alpha tilde unbdd density}
	Let $\nu = g\sigma$ with $g\in L^1(\sigma),\ g\ge 0$. Consider a cube $Q\in\EDG$ and a tree $\T$ with root $Q$. Suppose there exists $C>1$ such that for all $P\in\T$ we have $C^{-1}\ell(P)^n\le \nu(P)\le\nu(B_P)\le C\ell(P)^n$. Then, we have
	\begin{equation}\label{eq:estimate alpha tilde unbdd density}
	\widetilde{\alpha}_{\nu,2}(Q)^2\lesssim_{\varepsilon_0,C} \alpha_{\sigma,2}(B_Q)^2 + \sum_{P\in\T}\lVert \Delta^{\sigma}_P g\rVert ^2_{L^2(\sigma)}\frac{\ell(P)}{\ell(Q)^{n+1}} + \sum_{S\in \Stop(\T)}\frac{\ell(S)^2}{\ell(Q)^{n+2}}\nu(S),
	\end{equation}
	and
	\begin{equation}\label{eq:sum of martingales on tree unbdd density}
	\sum_{P\in\T}\lVert \Delta^{\sigma}_P g\rVert ^2_{L^2(\sigma)}\le C\lVert g\rVert_{L^1(\sigma)}=C \nu(\Gamma).
	\end{equation}
\end{lemma}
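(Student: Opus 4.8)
The plan is to reduce to \lemref{lem:estimate alpha tilde from GAFA} by a truncation-and-limit argument. First I would fix a large parameter $N>C$ and set $g_N = \min(g,N)$, $\nu_N = g_N\sigma$. The truncated density is bounded by $N$, so \lemref{lem:estimate alpha tilde from GAFA} would apply to $\nu_N$ \emph{provided} we can verify its hypothesis $C_N^{-1}\ell(P)^n\le \nu_N(P)\le C_N\ell(P)^n$ for all $P\in\T$. The upper bound is immediate with $C_N = C$ since $\nu_N\le\nu$ and $\nu(P)\le C\ell(P)^n$. For the lower bound one needs $\nu_N(P)\gtrsim \ell(P)^n$; here is where the extra assumption $\nu(B_P)\le C\ell(P)^n$ enters. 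Indeed, the set where $g>N$ inside $B_P$ has $\sigma$-measure at most $\nu(B_P)/N\le C\ell(P)^n/N$, so $\nu_N(P)\ge \nu(P) - \int_{P\cap\{g>N\}} g\,d\sigma$. The problem is that $\int_{P\cap\{g>N\}} g\,d\sigma$ need not be small — $g$ could be huge on a tiny set. The cleaner route is to instead estimate $\nu_N(P)\ge N\sigma(P\cap\{g>N\}) + \int_{P\cap\{g\le N\}}g\,d\sigma$, and observe that since $\nu(B_P)\le C\ell(P)^n$ and $\sigma(P)\approx\ell(P)^n$, for $N$ large enough (depending only on $C$ and dimensional constants) we must have $\sigma(P\cap\{g>N\})\le \tfrac12\sigma(P)\cdot\tfrac1{2C^2}$ or similar, which forces $\int_P g_N\,d\sigma\ge \tfrac12 C^{-1}\ell(P)^n$. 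So \lemref{lem:estimate alpha tilde from GAFA} yields
\begin{equation*}
\widetilde{\alpha}_{\nu_N,2}(Q)^2\lesssim_{\varepsilon_0,C} \alpha_{\sigma,2}(B_Q)^2 + \sum_{P\in\T}\lVert \Delta^{\sigma}_P g_N\rVert ^2_{L^2(\sigma)}\frac{\ell(P)}{\ell(Q)^{n+1}} + \sum_{S\in \Stop(\T)}\frac{\ell(S)^2}{\ell(Q)^{n+2}}\nu_N(S),
\end{equation*}
with implicit constant uniform in $N$ (crucially it depends on $C$, not on $N$, once $N\ge N_0(C)$).

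Next I would pass to the limit $N\to\infty$. On the right-hand side: $\nu_N(S)\le\nu(S)$ for every $S$, and for the martingale term I would use that $g_N\to g$ in $L^2_{loc}(\sigma)$ on each cube (again using $\nu(B_P)\le C\ell(P)^n$, so $g\in L^2$ on the relevant region — wait, this needs care: $g$ is only assumed $L^1$). Actually the correct move is to use $\lVert \Delta^{\sigma}_P g_N\rVert_{L^2(\sigma)}\le \lVert \Delta^{\sigma}_P g\rVert_{L^2(\sigma)}$ is \emph{not} generally true, so instead I would note $\sum_{P\in\T}\lVert \Delta^{\sigma}_P g_N\rVert^2_{L^2(\sigma)}\le \sum_{P\in\DG^e}\lVert \Delta^{\sigma}_P g_N\rVert^2_{L^2(\sigma)} = \lVert g_N\rVert^2_{L^2(\sigma)}$, which could still blow up. The right approach is a monotone/Fatou argument directly on $\widetilde{\alpha}_{\nu_N,2}(Q)$: since $g_N\uparrow g$, the measures $\psi_Q\nu_N$ converge weakly to $\psi_Q\nu$ (with matching normalization $a_N\to a$), and $W_2$ metrizes weak convergence on sets of bounded second moment — and $\int_{V(Q)}|x|^2\,d\nu<\infty$ by the doubling-type bound on $\nu(B_P)$ summed geometrically, or more simply because $V(Q)$ is bounded near $Q$'s scale. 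Hence $\widetilde{\alpha}_{\nu_N,2}(Q)\to\widetilde{\alpha}_{\nu,2}(Q)$, so the left-hand side passes to the limit. For the right-hand side, I would bound $\sum_{P\in\T}\lVert \Delta^{\sigma}_P g_N\rVert^2_{L^2(\sigma)}\ell(P)$ using the key identity in the proof of \lemref{lem:estimate alpha tilde from GAFA} that relates it back to a Wasserstein quantity for the flat model, and that flat Wasserstein quantity is itself controlled by $W_2(\psi_{Q}\nu,a\psi_Q\Hn{L})$ plus $\alpha_{\sigma,2}$-terms via \lemref{lem:remark 3.14 from Tolsa12} run in reverse — but this is circular. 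The honest fix: apply Fatou's lemma to $\sum_{P\in\T}\lVert \Delta^{\sigma}_P g_N\rVert^2_{L^2(\sigma)}\ell(P)$, using that $\Delta^{\sigma}_P g_N\to\Delta^{\sigma}_P g$ $\sigma$-a.e. for each fixed $P$ (since $g_N\to g$ pointwise and the averages converge by dominated convergence, as $g\in L^1$), so $\liminf_N \lVert \Delta^{\sigma}_P g_N\rVert^2_{L^2(\sigma)}\ge \lVert \Delta^{\sigma}_P g\rVert^2_{L^2(\sigma)}$ — this gives the inequality in the \emph{wrong direction}. So the genuinely correct argument is the opposite: prove the \emph{bound} for $\nu$ by using that $\widetilde{\alpha}_{\nu_N,2}(Q)$ is \emph{close to} $\widetilde{\alpha}_{\nu,2}(Q)$ for $N$ large, combined with the \emph{uniform} estimate for $\nu_N$ whose right-hand side I bound by the right-hand side for $\nu$ via $\nu_N\le\nu$ and $\lVert \Delta^{\sigma}_P g_N\rVert_{L^2}\le \lVert \Delta^{\sigma}_P g\rVert_{L^2} + \lVert \Delta^{\sigma}_P(g-g_N)\rVert_{L^2}$ where the error sums to something $\to 0$ by dominated convergence in $\ell^1(\T)$-weighted $L^1(\sigma)$ against the $L^1$ function $g$.

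For \eqref{eq:sum of martingales on tree unbdd density}, this is immediate and identical to the bounded case: for the $e$ with $Q\in\DG^e$,
\begin{equation*}
\sum_{P\in\T}\lVert \Delta^{\sigma}_P g\rVert ^2_{L^2(\sigma)}\le \sum_{P\in\DG^e}\lVert \Delta^{\sigma}_P g\rVert ^2_{L^2(\sigma)} = \lVert g\rVert^2_{L^2(\sigma)}
\end{equation*}
provided $g\in L^2$ — but here $g$ is only $L^1$, so one actually reads this the other way: the assumption $\nu(B_P)\le C\ell(P)^n$ for all $P\in\T$ forces $g\le C$ $\sigma$-a.e. on $\bigcup_{P\in\T}P$ after all? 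No — but it does give $\lVert \Delta^{\sigma}_P g\rVert^2_{L^2(\sigma)}\lesssim_C \nu(P)/\sigma(P)\cdot(\ldots)$ type bounds summing correctly. I would simply invoke the same chain as in \lemref{lem:estimate alpha tilde from GAFA}'s proof of \eqref{eq:sum of martingales on tree}, now using $g\le C$ a.e. on the support region (which \emph{does} follow: if $\nu(B_P)\le C\ell(P)^n$ for all $P\in\T$ and these cover a set of full $\nu$-measure down to scale $0$ off $\Stop(\T)$, then at $\sigma$-a.e. Lebesgue point the density is $\le C$). The main obstacle throughout is handling the limit $N\to\infty$ rigorously given only $L^1$ control on $g$ — the safe path is to run the truncation so that both the Wasserstein left-hand side converges (via weak convergence plus uniform second-moment bounds coming from $\nu(B_P)\le C\ell(P)^n$) and each right-hand side term is dominated, for all $N\ge N_0(C)$, by the corresponding term for $\nu$ up to a universal constant and an additive error tending to $0$.
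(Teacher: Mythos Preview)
Your truncation-and-limit approach has a genuine gap. When you apply \lemref{lem:estimate alpha tilde from GAFA} to $\nu_N=g_N\sigma$ with $g_N=\min(g,N)$, the implicit constant in \eqref{eq:estimate alpha tilde 2} depends on the constant $C$ appearing in the hypothesis $0\le g\le C$, not merely on the constant governing $\nu(P)\approx\ell(P)^n$. In the proof of that lemma the bound $g\le C$ is used pointwise to pass from $\int_Q\dist(x,L)^2\,d\nu$ to $\int_Q\dist(x,L)^2\,d\sigma$. Since you only know $g_N\le N$, the resulting estimate carries an $N$-dependent constant, and your claim that ``crucially it depends on $C$, not on $N$'' is unjustified. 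On top of this, as you yourself note, the passage to the limit on the martingale side is delicate: $\lVert\Delta^\sigma_P g_N\rVert_{L^2(\sigma)}$ bears no monotone relation to $\lVert\Delta^\sigma_P g\rVert_{L^2(\sigma)}$, and the various fixes you sketch are either circular or go the wrong way.

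The paper avoids both difficulties by choosing a different approximating measure. Instead of truncating $g$ pointwise, it replaces $\nu$ on each stopping cube $S\in\Stop(\T)$ by the uniform measure $\frac{\nu(S)}{\sigma(S)}\sigma|_S$, leaving $\nu$ untouched on the good set $G=Q\setminus\bigcup_{S\in\Stop}S$. This approximant $\tilde\nu$ has three key properties: (i) its density is bounded by $\lesssim C$, because on $G$ the tree reaches arbitrarily small scales and the hypothesis $\nu(B_P)\le C\ell(P)^n$ forces $g\lesssim C$ at $\sigma$-a.e.\ Lebesgue point (this is where the extra assumption is actually used); (ii) for every $P\in\T$ one has $\Delta^\sigma_P\tilde g=\Delta^\sigma_P g$ exactly, since the averages of $\tilde\nu$ and $\nu$ agree on all cubes of $\T\cup\Stop(\T)$; (iii) $W_2(\psi_Q\tilde\nu,\psi_Q\nu)^2\lesssim\sum_{S\in\Stop}\ell(S)^2\nu(S)$ by transporting mass within each $S$. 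Thus one applies \lemref{lem:estimate alpha tilde from GAFA} to $\tilde\nu$ with constant depending only on $C$, the right-hand side is literally the right-hand side for $\nu$ by (ii), and the triangle inequality together with (iii) closes the gap on the left. No limit is needed.
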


We divide the proof into smaller pieces. Let $\Stop=\Stop(\Tree)$. First, we define the set of good points as
\begin{equation*}
G = Q\setminus \bigcup_{P\in\Stop}P.
\end{equation*}
Note that the points from $x\in G$ are not contained in any stopping cube, and so there are arbitrarily small cubes $P\in\T$ containing $x$. We introduce the following approximating measure:
\begin{equation*}
\tilde{\nu} = \restr{\nu}{G} + \sum_{S\in \Stop } \frac{\nu(S)}{\sigma(S)}\restr{\sigma}{S}.
\end{equation*}
It is clear that for $Q\in\Tree\cup\Stop$ we have $\tilde{\nu}(Q)=\nu(Q)$. Moreover, for $Q\in\Tree$
\begin{equation}\label{eq:Tree density estimate}
C^{-1}\ell(Q)^n\le \tilde{\nu}(Q)=\nu(Q)\le C\ell(Q)^n.
\end{equation}
On the other hand, each $S\in\Stop$ is a child of some $Q\in\Tree$, so that
\begin{equation}\label{eq:Stop density estimate}
\tilde{\nu}(S)=\nu(S)\le \nu(Q)\le C\ell(Q)^n = 2^n C\ell(S)^n.
\end{equation}

\begin{lemma}\label{lem:density bounded}
	We have	
	\begin{equation*}
	\left\lVert\frac{d\tilde{\nu}}{d\sigma}\right\rVert_{L^{\infty}(\sigma)}\lesssim C.
	\end{equation*}
\end{lemma}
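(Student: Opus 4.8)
The plan is to write down the Radon--Nikodym derivative $d\tilde\nu/d\sigma$ explicitly and to bound it separately on the good set $G$ and on the stopping cubes. Since $G\subset Q$ and every $S\in\Stop$ satisfies $S\subset Q$, the measure $\tilde\nu$ is supported on $Q$; moreover $Q$ is the disjoint union of $G$ and of the cubes $S\in\Stop$ (the $\Gamma$-cubes being half-open, and the stopping cubes being maximal and pairwise disjoint). From the definition of $\tilde\nu$ one reads off that $d\tilde\nu/d\sigma$ equals $g$ $\sigma$-a.e.\ on $G$, equals the constant $\nu(S)/\sigma(S)$ on each $S\in\Stop$, and vanishes off $Q$. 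Hence it is enough to prove that $\nu(S)/\sigma(S)\lesssim C$ for all $S\in\Stop$ and that $g\le C$ $\sigma$-a.e.\ on $G$.

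For the stopping cubes, the key observation is that $\sigma(P)=\ell(P)^n$ for every $P\in\EDG$: indeed $P=\Pi_\Gamma(P_0)$ for a dyadic cube $P_0\subset L_0$ with $\ell(P_0)=\ell(P)$, and since $\Pi_\Gamma$ is the graph map over $L_0$ we get $\sigma(P)=(\Pi_\Gamma)_*\Hn{L_0}(P)=\Hn{L_0}(P_0)=\ell(P)^n$. Combining this with $\tilde\nu(S)=\nu(S)\le 2^nC\,\ell(S)^n$ from \eqref{eq:Stop density estimate} immediately gives $\nu(S)/\sigma(S)\le 2^nC$ for every $S\in\Stop$.

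For the good set, I would fix $x\in G$ and, for $j$ not smaller than the generation of the root $Q$, let $P_j(x)\in\DGe$ denote the cube of sidelength $2^{-j}$ containing $x$. A short chain argument using property (S) shows that $P_j(x)\in\T$ for all such $j$ — otherwise the shallowest cube in the chain not lying in $\T$ would, having its parent in $\T$, belong to $\Stop$, contradicting $x\in G$. The tree hypothesis $\nu(P)\le C\ell(P)^n$ together with $\sigma(P)=\ell(P)^n$ then yields $\nu(P_j(x))/\sigma(P_j(x))\le C$ for every $j$. Since $g\in L^1(\sigma)$ and the grids $\DGe$ form a genuine dyadic filtration for the doubling measure $\sigma\approx\HG$, the dyadic differentiation (martingale convergence) theorem gives $\sigma(P_j(x))^{-1}\int_{P_j(x)}g\,d\sigma\to g(x)$ for $\sigma$-a.e.\ $x$, and hence $g(x)=\lim_j\nu(P_j(x))/\sigma(P_j(x))\le C$ for $\sigma$-a.e.\ $x\in G$. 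Putting the two bounds together gives $\lVert d\tilde\nu/d\sigma\rVert_{L^\infty(\sigma)}\le 2^nC\lesssim C$. The only slightly delicate point is this last step: one must observe that the cubes of $\T$ through a point of $G$ are precisely the dyadic cubes $P_j(x)$ from some scale on, and use a.e.\ differentiation against $\sigma$ rather than against Lebesgue measure — but both are routine, the former by definition of the tree and of $G$, the latter because $\sigma$ is comparable to $\HG$ and the $\DGe$ are honest dyadic lattices.
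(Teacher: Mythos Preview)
Your proof is correct and follows essentially the same approach as the paper: split into the stopping cubes and the good set $G$, and bound the density on each piece. The only minor difference is on $G$: you use dyadic (martingale) differentiation and the hypothesis $\nu(P)\le C\ell(P)^n$ directly, whereas the paper differentiates over balls, sandwiches $B(x,2^{-j})$ between $Q_{j+j_0}$ and $B_{Q_j}$, and invokes the stronger hypothesis $\nu(B_P)\le C\ell(P)^n$, picking up an extra dimensional factor $2^{nj_0}$ --- your variant is marginally cleaner but the two arguments are otherwise the same.
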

\begin{proof}
	It is trivial that for $x\in S\in\Stop$ the density is constant and
	\begin{equation*}
	\frac{d\tilde{\nu}}{d\sigma}(x)=\frac{\nu(S)}{\sigma(S)}=\frac{\nu(S)}{\ell(S)^n}\overset{\eqref{eq:Stop density estimate}}{\le} 2^n\, C.
	\end{equation*}
	On the other hand, by the definition of $\tilde{\nu}$, for $\sigma$-a.e. $x\in G$ we have $\frac{d\tilde{\nu}}{d\sigma}(x)=\frac{d\nu}{d\sigma}(x)=g(x)$. Moreover, for $\sigma$-a.e. $x\in G$ we have a sequence of cubes $Q_j\in\T$ such that $\ell(Q_j)=2^{-j}$ and $x\in Q_j$. Note that there exists some integer $j_0>0$ (depending on dimension) such that 
	\begin{equation*}
	Q_{j+j_0}\subset B(x,2^{-j})\subset B_{Q_j}.
	\end{equation*}
	It follows that
	\begin{equation*}
	\frac{d\tilde{\nu}}{d\sigma}(x)=\frac{d\nu}{d\sigma}(x)=\lim_{j\to\infty}\frac{\nu(B(x,2^{-j}))}{\sigma(B(x,2^{-j}))}
	\le \lim_{j\to\infty}\frac{\nu(B_{Q_j})}{\sigma(Q_{j+j_0})}\le \lim_{j\to\infty}\frac{C\ell(Q_j)^n}{\ell(Q_{j+j_0})^n}=C\,2^{nj_0}.
	\end{equation*}
	Thus,
	\begin{equation*}
	\left\lVert\frac{d\tilde{\nu}}{d\sigma}\right\rVert_{L^{\infty}(\sigma)}\lesssim C.
	\end{equation*}
\end{proof}

Let $\tilde{g}\in L^1(\sigma)\cap L^{\infty}(\sigma)$ be such that $\tilde{\nu}=\tilde{g}\sigma$. Applying \lemref{lem:estimate alpha tilde from GAFA} to $\tilde{\nu}$ yields 
\begin{equation}\label{eq:approximating estimate 1}
\widetilde{\alpha}_{\tilde{\nu},2}(Q)^2\lesssim_{\varepsilon_0,C} \alpha_{\sigma,2}(B_Q)^2 + \sum_{P\in\T}\lVert \Delta^{\sigma}_P \tilde{g}\rVert ^2_{L^2(\sigma)}\frac{\ell(P)}{\ell(Q)^{n+1}} + \sum_{S\in \Stop}\frac{\ell(S)^2}{\ell(Q)^{n+2}}\tilde{\nu}(S),
\end{equation}
and
\begin{equation}\label{eq:approximating estimate 2}
\sum_{P\in\T}\lVert \Delta^{\sigma}_P \tilde{g}\rVert ^2_{L^2(\sigma)}\le C\lVert \tilde{g}\rVert_{L^1(\sigma)}=C \tilde{\nu}(\Gamma)=C\nu(\Gamma).
\end{equation}

Observe that for $P\in\Tree$ we have
\begin{equation}\label{eq:martingales the same}
\Delta^{\sigma}_P \tilde{g} =  \Delta^{\sigma}_P {g}.
\end{equation}
Indeed, for $x\notin P$ both quantities are equal to zero. For $x\in P'\subset P$, where $P'$ is a child of $P$, we have $P'\in\Tree\cup\Stop$, and so
\begin{equation*}
\Delta^{\sigma}_P \tilde{g}(x) = \frac{\int_{P'} \tilde{g}\ d\sigma}{\sigma(P')} - \frac{\int_{P} \tilde{g}\ d\sigma}{\sigma(P)} = 
\frac{\tilde{\nu}(P')}{\sigma(P')} - \frac{\tilde{\nu}(P)}{\sigma(P)} = \frac{{\nu}(P')}{\sigma(P')} - \frac{{\nu}(P)}{\sigma(P)}= \Delta^{\sigma}_P {g}.
\end{equation*}
Hence, \eqref{eq:sum of martingales on tree unbdd density} follows immediately from \eqref{eq:approximating estimate 2}.

Since for $S\in\Stop$ we have $\tilde{\nu}(S)=\nu(S)$, we can use \eqref{eq:martingales the same} to transform \eqref{eq:approximating estimate 1} into 
\begin{equation}\label{eq:approximating estimate 3}
\widetilde{\alpha}_{\tilde{\nu},2}(Q)^2\lesssim_{\varepsilon_0,C} \alpha_{\sigma,2}(B_Q)^2 + \sum_{P\in\T}\lVert \Delta^{\sigma}_P {g}\rVert ^2_{L^2(\sigma)}\frac{\ell(P)}{\ell(Q)^{n+1}} + \sum_{S\in \Stop}\frac{\ell(S)^2}{\ell(Q)^{n+2}}{\nu}(S).
\end{equation}
In order to reach \eqref{eq:estimate alpha tilde unbdd density} and finish the proof of \lemref{lem:estimate alpha tilde unbdd density}, we only need to show how to pass from the estimate on $\widetilde{\alpha}_{\tilde{\nu},2}(Q)$ \eqref{eq:approximating estimate 3} to one on $\widetilde{\alpha}_{{\nu},2}(Q)$.

\begin{proof}[Proof of \lemref{lem:estimate alpha tilde unbdd density}]	
	Recall that if $\measuredangle(L_Q,L_0)>1-\varepsilon_0$, then $\widetilde{\alpha}_{\nu,2}(Q)=1$, but at the same time $\alpha_{\sigma,2}(B_Q)\gtrsim 1$ by \lemref{lem:orthogonal planes L}, so this case is trivial. Suppose $\measuredangle(L_Q,L_0)\le 1-\varepsilon_0$.
	We define a transport plan between $\psi_Q \tilde{\nu}$ and $\psi_Q\nu$:
	\begin{equation*}
	d\pi(x,y) = \one_{Q\cap G}(x)d\nu(x) d\delta_{x}(y) + \sum_{S\in\Stop}\frac{\one_S(x)\one_S(y)}{\sigma(S)}d\nu(x) d\sigma(y),
	\end{equation*}
	and we estimate
	\begin{equation*}
	W_2(\psi_Q\tilde{\nu},\psi_Q\nu)^2\le \int|x-y|^2\ d\pi(x,y)\lesssim \sum_{S\in\Stop}\ell(S)^2\nu(S).
	\end{equation*}
	From the triangle inequality, the bound above, and \eqref{eq:approximating estimate 3}, we get that
	\begin{multline*}
	\widetilde{\alpha}_{\nu,2}(Q)^2 \approx \ell(Q)^{-(n+2)}W_2(\psi_Q\nu,a\psi_Q\Hn{L_Q})^2\\
	\lesssim 	\ell(Q)^{-(n+2)}\big(W_2(\psi_Q\tilde{\nu},\psi_Q\nu)^2 + W_2(\psi_Q\tilde{\nu},a\psi_Q\Hn{L_Q})^2\big)\\
	\lesssim_{\varepsilon_0,C} \alpha_{\sigma,2}(B_Q)^2 + \sum_{P\in\T}\lVert\Delta^{\sigma}_P g\rVert^2_{L^2(\sigma)}\frac{\ell(P)}{\ell(Q)^{n+1}} + \sum_{S\in \Stop}\frac{\ell(S)^{2}}{\ell(Q)^{n+2}}\nu(S).
	\end{multline*}
\end{proof}

\section{Approximating Measures}\label{sec:approximating measures nuQ}
% \texorpdfstring{$\nu_Q$}{nu\_Q}
We will construct a family of measures on $\Gamma$ that will approximate $\mu$. For every Whitney cube $P\in\W^e$ we define $g_P:\Gamma\rightarrow\R$ as
\begin{equation*}
g_P(x) = \frac{\mu(P)}{\ell(P)^n}\one_{\Pi_{\Gamma}(P)}(x).
\end{equation*}
Note that $\int g_P\ d\sg = \mu(P)$. 

Given $e\in\{0,1\}^n,\ k\in\Z$, we define the following measures supported on $\Gamma$:
\begin{align*}
\nu^e &= \restr{\mu}{\Gamma} + \left(\sum_{P\in\W^e}g_P\right)\sg,\\
\nu^e_k &= \restr{\mu}{\Gamma} + \left(\sum_{P\in\W_k^e}g_P\right)\sg.
\end{align*}
Moreover, for every $Q\in\DG$ with $\ell(Q)\le 2^{-k_0}$ we set
\begin{equation*}
\nu_Q = \nu^{e(Q)}_{k(Q)} = \restr{\mu}{\Gamma} + \left(\sum_{P\in\W_Q}g_P\right)\sg.
\end{equation*}
Note that, since we assume $\mu$ is finite and compactly supported (see Remark \ref{remark:mu compactly supported}), all the measures $\nu^e,\ \nu^e_k,$ are also finite and compactly supported.

We defined $\nu_Q$ in such a way that, for ``good'' $Q\in\DG$, the measures $\restr{\mu}{B_Q}$ and $\restr{\nu_Q}{B_Q}$ are close in the $W_2$ distance. This will be shown in Section \ref{sec:pass from approximating to mu}. The rest of this section is dedicated to the construction of a tree of ``good cubes''.

Recall that $R\in\D_{\Gamma}$ is a $\Gamma$-cube fixed in \remref{remark:mu compactly supported}, and $0<\varepsilon\ll 1$ is a small constant fixed in Subsection \ref{subsec:notation}.

\begin{lemma}\label{lem:T_lemma_general}
	 Let $\lambda>3$. Then, there exist a big constant $M=M(\varepsilon,\lambda,\Lambda,n,d,\mu)\gg 1$ and a tree of good cubes $\T=\Tree(\lambda,\varepsilon,M)\subset \DG(R)$ with root $R$, such that for every $Q\in \T$ we have
	\begin{align*}
	%\sigma(3\widetilde{B}_Q)\le M\ell(Q)^n,\\
	\mu(\lambda\widetilde{B}_Q)&\le M\ell(Q)^n,\\
	\mu(Q)&\ge M^{-1} \ell(Q)^n,
	\end{align*}
	the stopping region $\Stop = \Stop(\Tree)$ is small:
	\begin{equation*}
	\mu\bigg(\bigcup_{Q\in\Stop} Q\bigg)<\varepsilon,
	\end{equation*}
	and $\widehat{\alpha}_{\nu_Q,2}(\widetilde{B}_Q)^2$ satisfy the packing condition:
	\begin{equation}\label{eq:packing condition on tree}
	\sum_{\substack{Q\in \T}} \widehat{\alpha}_{\nu_Q,2}(\widetilde{B}_Q)^2 \ell(Q)^n <\infty.
	\end{equation}
\end{lemma}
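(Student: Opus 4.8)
The plan is to construct $\T$ by a stopping-time argument governed by the local behaviour of $\mu$, and then to verify the packing condition~\eqref{eq:packing condition on tree} by feeding the estimates of Section~\ref{sec:estimates of aplha2} into a Fubini-type summation, with the uniform rectifiability of $\Gamma$ as the external input. Two global facts carry most of the weight. Since $\mu$ is $n$-rectifiable, $0<\Theta^n(\mu,x)<\infty$ for $\mu$-a.e.\ $x$; in particular the upper $\mu$-density is a.e.\ finite, the lower $\mu$-density is a.e.\ positive, and $\mu$ is pointwise doubling. And since $\sigma=(\Pi_{\Gamma})_*\Hn{L_0}$ is $n$-AD-regular and uniformly rectifiable, \thmref{thm:characterization UR} (with $p=2$) gives, for each translated grid and a fixed bounded neighbourhood $R''$ of $R$,
\[
\sum_{Q\in\DG^e,\ Q\subset R''}\alpha_{\sigma,2}(B_Q)^2\,\ell(Q)^n\ \lesssim_{\Gamma}\ \ell(R)^n<\infty,
\]
which disposes, once and for all, of every $\alpha_{\sigma,2}$-term that will appear.

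I would define $\Stop=\Stop(\T)$ to be the family of maximal cubes $Q\in\DG(R)$ at which one of the following holds while failing at no strict ancestor: (HD) $\mu(\lambda\widetilde{B}_Q)>M\ell(Q)^n$; (LD) $\mu(Q)<M^{-1}\ell(Q)^n$; or (A) an auxiliary condition guaranteeing that the approximating measure $\nu_Q$ has density comparable to $1$ on $Q$ and that $\nu_Q(B_P)\lesssim_M\ell(P)^n$ for all the descendants $P$ of $Q$ that we will need, so that the hypotheses of \lemref{lem:alphas on cubes} and \lemref{lem:estimate alpha tilde unbdd density} are met along every branch. The tree $\T$ consists of the cubes with no stopping ancestor; the required density inequalities for $Q\in\T$ are then precisely the failures of (HD), (LD). For $M=M(\varepsilon,\dots,\mu)$ large the estimate $\mu(\bigcup_{Q\in\Stop}Q)<\varepsilon$ is a routine density-point / maximal-function argument: the set of $x$ whose upper $\mu$-density exceeds $M^{1/2}$, or whose lower $\mu$-density is below $M^{-1/2}$, or at which the Whitney approximation of $\mu$ degenerates, has $\mu$-measure going to $0$ as $M\to\infty$ (here one uses that $\mu$ is finite and compactly supported, Remark~\ref{remark:mu compactly supported}, that $\mu$ is pointwise doubling, and the properties of Whitney cubes), and each stopping cube is certified by such a point at a comparable scale since $x\in Q$ forces $B(x,c\ell(Q))\subset\lambda\widetilde{B}_Q\subset B(x,C\ell(Q))$.

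For the packing condition, fix $Q\in\T$ and, via \lemref{lem:Qtilde_corresp_to_Q}, the cube $P_Q\in\EDG$ with $\ell(P_Q)=2^{k_0}\ell(Q)\approx\ell(Q)$ and $3\widetilde{B}_Q\subset V(P_Q)$. The density information above, the Whitney properties, and $\nu_Q\ll\sigma$ let me apply \lemref{lem:alphas on cubes} to obtain $\widehat{\alpha}_{\nu_Q,2}(\widetilde{B}_Q)\lesssim\atilde_{\nu_Q,2}(P_Q)+\alpha_{\sigma,2}(B_{P_Q})$, and then \lemref{lem:estimate alpha tilde unbdd density}, applied to $\nu_Q=g_Q\sigma$ with root $P_Q$ and the tree $\T_{P_Q}$ obtained by transporting $\{P\in\T:P\subseteq Q\}$ into the grid of $P_Q$ (so that $\Stop(\T_{P_Q})$ corresponds, up to bounded distortion, to $\Stop(\T)\cap\DG(Q)$), bounds $\atilde_{\nu_Q,2}(P_Q)^2$ by $\alpha_{\sigma,2}(B_{P_Q})^2$ plus the weighted martingale sum $\sum_{P\in\T_{P_Q}}\lVert\Delta^{\sigma}_P g_Q\rVert^2_{L^2(\sigma)}\,\ell(P)/\ell(P_Q)^{n+1}$ plus the stopping sum $\sum_{S\in\Stop(\T_{P_Q})}\ell(S)^2\nu_Q(S)/\ell(P_Q)^{n+2}$. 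Squaring, multiplying by $\ell(Q)^n\approx\ell(P_Q)^n$, and summing over $Q\in\T$, the three pieces are handled as follows. The $\alpha_{\sigma,2}(B_{P_Q})^2\ell(P_Q)^n$ terms sum up by the uniform-rectifiability bound above, since $Q\mapsto P_Q$ is at most $2^{nk_0}$-to-one. The stopping terms, after exchanging the order of summation in $(Q,S)$ and using the elementary identity $\sum_{Q\in\T,\,Q\supseteq S}\ell(Q)^{-2}\approx\ell(S)^{-2}$, collapse to $\lesssim_M\sum_{S\in\Stop(\T)}\mu(B_S)\lesssim_M\mu(\R^d)<\infty$. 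The martingale terms, after exchanging the order of summation in $(Q,P)$, are controlled by the orthogonality $\sum_P\lVert\Delta^{\sigma}_P g\rVert^2_{L^2(\sigma)}=\lVert g\rVert^2_{L^2(\sigma)}$ together with \eqref{eq:sum of martingales on tree unbdd density}, the geometric gain $\sum_{Q\supseteq P}\ell(P)/\ell(Q)\lesssim1$, and the observation that at a fixed $P$ only boundedly many of the truncations $\nu_Q=\nu^{e}_{k}$ produce a non-zero $\Delta^{\sigma}_P$, because a Whitney cube affecting $\Delta^{\sigma}_P$ has side length $\lesssim\ell(P)$ and therefore survives every truncation at a scale $\gtrsim\ell(P)$. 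Adding the three bounds yields~\eqref{eq:packing condition on tree}.

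I expect the martingale piece to be the main obstacle. The difficulty is genuine: the measures $\nu_Q$ depend on $Q$ through the Whitney-truncation scale and the grid direction $e(Q)$, so one cannot simply interchange sums and cite a single $L^2$-inequality; the reorganization has to be carried out scale by scale, isolating the contribution of each generation of Whitney cubes and verifying that the ``large'' Whitney cubes excluded from a given $\nu_Q$ do not perturb the martingale differences at the scales over which one is summing. A closely related, more bookkeeping-flavoured difficulty is calibrating the auxiliary condition (A): the single pair $(M,\T)$ must simultaneously yield a small stopping region and, for every $Q\in\T$, the full list of density bounds needed to run \lemref{lem:estimate alpha tilde unbdd density} down the branch of $\T$ beneath $Q$ and to keep the auxiliary trees $\T_{P_Q}$ compatible with $\Stop(\T)$ — the order in which the stopping conditions are imposed is what makes this close.
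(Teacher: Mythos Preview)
Your overall architecture is the paper's: stopping-time for density, \lemref{lem:alphas on cubes} followed by \lemref{lem:estimate alpha tilde unbdd density} at each $Q$, then Fubini over the three pieces. Two points, however, are genuine gaps rather than cosmetic.

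\textbf{The shifted grids.} Your proposal to ``transport $\{P\in\T:P\subseteq Q\}$ into the grid of $P_Q$'' does not work as written: $P_Q\in\DG^{e(Q)}$ with $e(Q)$ varying over all of $\{0,1\}^n$, while $\T\subset\DG$; there is no canonical way to carry a tree from one grid to another, and \lemref{lem:estimate alpha tilde unbdd density} needs an honest tree in $\DG^{e(Q)}$ rooted at $P_Q$. The paper resolves this by running the stopping-time argument \emph{separately in each shifted grid}: for every $e$ one defines $\HD^e_{\mu}$, $\HD^e_{\nu}$ (this is your vague condition (A), stated precisely as $\nu^e(\lambda\widetilde B_Q)>M\ell(Q)^n$), and $\LD^e$, obtaining trees $\Tree^e\subset\DG^e$. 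The final $\Tree\subset\DG$ consists of those $Q$ that have a partner of the same sidelength in \emph{every} $\Tree^e$; this forces $P_Q\in\Tree^{e(Q)}$, and one applies \lemref{lem:estimate alpha tilde unbdd density} with the subtree $\{P\in\Tree^{e(Q)}:P\subset P_Q\}$. The smallness of $\Stop$ follows because $\bigcup_{\Stop}Q\subset\bigcup_e\bigcup_{\Stop^e}Q$.

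\textbf{The martingale piece.} Your claim that ``at a fixed $P$ only boundedly many truncations $\nu_Q$ produce a non-zero $\Delta^\sigma_P$'' is false: infinitely many do. The correct statement, which is exactly what your follow-up observation about Whitney cubes yields, is that \emph{all} relevant truncations produce the \emph{same} martingale difference. Concretely, if $\ell(P)\le 2^{-k}$ then $\Delta^\sigma_P g^e_k=\Delta^\sigma_P g^e_0$: the Whitney cubes $S$ discarded by the truncation satisfy $\ell(S)>2^{-k}\ge\ell(P)$, so $\Pi_\Gamma(S)\in\DG^e$ either contains $P$ or misses it, hence contributes a constant on $P$ which cancels in $\Delta^\sigma_P$. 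Thus for each direction $e$ the martingale sum involves the \emph{single} function $g^e_0$, and \eqref{eq:sum of martingales on tree unbdd density} applied once per direction closes the argument after Fubini. With these two corrections your outline coincides with the paper's proof.
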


We split the proof into several small lemmas. First, we define auxiliary families of good cubes in $\DGe$ using a standard stopping time argument. 

%%The stopping region $\mathrm{Stop}(\T)$ will be used later on to define the small exceptional set $H\subset R$, while the properties of good cubes $Q\in\T$ will allow us to prove \lemref{lem:general_with_Gamma}.
%
%
%\begin{proof}[Proof of \lemref{lem:T_lemma_general}]
%	\textbf{Step 1.} We begin by defining trees of good cubes in $\DGe$ using a standard stopping time argument. 
%	
	For each $e\in\{0,1\}^n$ there exists a finite collection of cubes $\{R_i^e\}\subset\DGe$ such that $\ell(R_i^e)=1,\ R_i^e\cap R\ne \varnothing.$ Set $R^e= \bigcup_i R_i^e$. Let $M\gg 1$ be constant to be fixed later on, and set
	\begin{align*}
	\HD_{\nu,0}^{e} &= \{Q\in\DGe :\  Q\subset R^e,\ \nu^e(\lambda\widetilde{B}_Q)>M\ell(Q)^n \},\\
	\HD_{\mu,0}^{e} &= \{Q\in\DGe :\  Q\subset R^e,\ \mu(\lambda\widetilde{B}_Q)>M\ell(Q)^n \},\\
	\LD_{0}^e &= \{Q\in\DGe :\  Q\subset R^e,\ \mu(Q)<M^{-1}\ell(Q)^n\}.
	\end{align*}
	$\HD$ and $\LD$ stand for ``high density'' and ``low density''. Let $\Stop^e\subset\DGe$ be the family of maximal with respect to inclusion cubes from $\HD_{\nu,0}^{e}\cup \HD_{\mu,0}^{e}\cup \LD_{0}^e$, and set $\HD_{\nu}^e= \HD_{\nu,0}^{e}\cap\Se,\ \HD_{\mu}^e= \HD_{\mu,0}^{e}\cap\Se,\ \LD^e= \LD_{0}^e\cap\Se$. Note that cubes from $\Se$ are pairwise disjoint. We define $\T^e$ as the family of those cubes from $\bigcup_i\DGe(R^e_i)$ which are not contained in any cube from $\Se$. Actually, this might not be a tree, but it is a finite collection of trees with roots $R_i^e$.
	
\begin{lemma}\label{lem:fixing M}
	For $M=M(\varepsilon,\lambda,\Lambda,n,d,\mu)$ big enough, we have for all $e\in\{0,1\}^n$
	\begin{equation}\label{eq:Stope small}
	\mu\bigg(\bigcup_{Q\in\Se}Q\bigg)<\frac{\varepsilon}{2^n}.
	\end{equation}
\end{lemma}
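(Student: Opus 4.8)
The plan is to show that the $\mu$-measure of the union of stopping cubes in $\Se$ can be made arbitrarily small by choosing $M$ large, and this is a fairly standard consequence of the fact that the three bad families $\HD_{\nu,0}^e$, $\HD_{\mu,0}^e$, $\LD_0^e$ each carry little mass. First I would recall that $\bigcup_{Q\in\Se}Q$ is contained in $\big(\bigcup_{Q\in \HD_{\nu,0}^e}Q\big)\cup\big(\bigcup_{Q\in \HD_{\mu,0}^e}Q\big)\cup\big(\bigcup_{Q\in \LD_0^e}Q\big)$, and that it suffices to bound the $\mu$-mass of each of these three sets by $\varepsilon\cdot 3^{-1}\cdot 2^{-n}$ once $M$ is large (here I use that $\mu$ is finite and compactly supported, as fixed in \remref{remark:mu compactly supported}, so $\mu(\R^d)<\infty$ is the relevant ``total mass'' that the bound will be compared against).

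For the low-density family $\LD_0^e$, the point is that $\sum_{Q\in\LD^e}\mu(Q)<M^{-1}\sum_{Q}\ell(Q)^n$ where the sum runs over the maximal (hence pairwise disjoint) low-density cubes contained in $R^e$; since all these cubes lie in a bounded region and have sidelength $\le 1$, $\sum\ell(Q)^n\lesssim \H^n(R^e)\lesssim 1$ is bounded by a constant depending only on $n$ and the fixed graph, so $\mu\big(\bigcup_{Q\in\LD^e}Q\big)\lesssim M^{-1}$, which is $<\varepsilon\,3^{-1}2^{-n}$ for $M$ large. For the two high-density families one argues instead via a Vitali-type covering / bounded-overlap argument: if $Q\in\HD^e_{\mu,0}$ then $\mu(\lambda\widetilde B_Q)>M\ell(Q)^n$, i.e. the dilated balls $\lambda\widetilde B_Q$ have abnormally large $\mu$-mass relative to their radius; covering $\bigcup_{Q}Q$ by the balls $\{\lambda\widetilde B_Q\}$ and extracting a subfamily of bounded overlap (using that $r(\lambda\widetilde B_Q)\approx_{\lambda,\Lambda}\ell(Q)$ and a Besicovitch/Vitali argument), one obtains $\mu\big(\bigcup_Q Q\big)\le \sum_{\text{subfamily}} \mu(\lambda\widetilde B_Q)$, but this goes the wrong way directly — so instead one should split according to sidelength and use that for cubes of a fixed generation the dilated balls have bounded overlap, then sum a geometric series; more efficiently, one uses the standard fact that the set where a ``maximal function'' type quantity $\sup_{r} \mu(B(x,r))/r^n$ exceeds $M$ has small $\mu$-measure. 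Concretely: each $x\in\bigcup_{Q\in\HD^e_{\mu,0}}Q$ lies in some $Q$ with $\mu(\lambda\widetilde B_Q)>M\ell(Q)^n$, and since $x\in Q\subset \lambda\widetilde B_Q$ with $r(\lambda\widetilde B_Q)\approx_{\lambda,\Lambda}\ell(Q)$, we get $\mu(B(x,\rho))>c\,M\rho^n$ for $\rho\approx_{\lambda,\Lambda}\ell(Q)$; hence the union is contained in $\{x : \limsup_{\rho\to 0}\text{ or }\sup_\rho \mu(B(x,\rho))\rho^{-n}>cM\}$, and one invokes the elementary estimate (a consequence of a Vitali covering lemma, see e.g. the differentiation theory in \cite[Chapter 2]{mattila1999geometry}) that this set has $\mu$-measure $\lesssim (cM)^{-1}\mu(\R^d)$, which tends to $0$ as $M\to\infty$. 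The same reasoning applies verbatim to $\HD^e_{\nu,0}$ once one checks $\nu^e(\R^d)=\nu^e(\Gamma)<\infty$: indeed $\nu^e=\restr\mu\Gamma+\big(\sum_{P\in\W^e}g_P\big)\sigma$ and $\int g_P\,d\sigma=\mu(P)$ with the $P\in\W^e$ pairwise disjoint, so $\nu^e(\Gamma)\le \mu(\Gamma)+\sum_{P\in\W^e}\mu(P)\le 2\mu(\R^d)<\infty$ (using that the bounded overlap of Whitney cubes makes this at worst a finite-overlap sum, so in fact $\nu^e(\Gamma)\lesssim\mu(\R^d)$).

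Combining the three bounds, for $M=M(\varepsilon,\lambda,\Lambda,n,d,\mu)$ large enough — the dependence on $\mu$ entering through $\mu(\R^d)$ and through the doubling-type constants implicit in the covering argument, the dependence on $\lambda,\Lambda$ through the radius comparison $r(\lambda\widetilde B_Q)\approx_{\lambda,\Lambda}\ell(Q)$ — we obtain $\mu\big(\bigcup_{Q\in\Se}Q\big)<\varepsilon 2^{-n}$ for every $e\in\{0,1\}^n$, which is \eqref{eq:Stope small}. The main obstacle, and the only place requiring genuine care, is the high-density estimate: one has to make sure the covering argument is applied correctly (it is the radii, not the sidelengths, that one feeds into the maximal-function estimate, and one must track that $r(\lambda\widetilde B_Q)$ is comparable to $\ell(Q)$ with constant depending only on $\lambda,\Lambda,n$), and one must verify that $\nu^e$ is a genuine finite Radon measure so that the same estimate applies to it; everything else is bookkeeping.
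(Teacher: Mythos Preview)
Your overall strategy matches the paper's: split $\Se$ into the low-density and the two high-density families and show each carries little $\mu$-mass. The $\LD^e$ bound is fine and is exactly what the paper does.

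The gap is in your treatment of the high-density families. You assert, citing Vitali/Besicovitch, that
\[
\mu\big(\{x:\ \exists\,\rho\in(0,1),\ \mu(B(x,\rho))>cM\rho^n\}\big)\ \lesssim\ (cM)^{-1}\mu(\R^d).
\]
This weak-type bound is false in general, and it fails even for rectifiable $\mu$. A Vitali argument gives only $\sum_i \rho_i^n\lesssim (cM)^{-1}\mu(\R^d)$ for a disjoint subfamily of the covering balls, i.e.\ a bound on the $\H^n$-content of the set, not on its $\mu$-measure. Concretely, take $n=1$, $\mu=f\,\H^1|_{(0,1)}$ with $f(x)=x^{-\alpha}$, $0<\alpha<1$: then $\mu(\R^d)<\infty$, $\mu$ is $1$-rectifiable, the set $E_M$ above is essentially $(0,(cM)^{-1/\alpha})$, and $\mu(E_M)\approx M^{-(1-\alpha)/\alpha}$, which for $\alpha$ close to $1$ decays arbitrarily slowly---certainly not like $M^{-1}$. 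So no purely measure-theoretic covering estimate can give the rate you claim.

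What the paper uses instead is the qualitative fact that $\mu$ is $n$-rectifiable, hence $\Theta^n(x,\mu)<\infty$ for $\mu$-a.e.\ $x$; combined with $\mu(\R^d)<\infty$ this yields that $\sup_{r\in(0,1)}\mu(B(x,r))r^{-n}<\infty$ $\mu$-a.e., so the sets
\[
H_N=\{x:\ \mu(B(x,r))>Nr^n\ \text{for some }r\in(0,1)\}
\]
satisfy $\mu(H_N)\to 0$ as $N\to\infty$ (by monotone convergence of the indicators, not by any quantitative rate). One then checks, exactly as you do, that $\bigcup_{Q\in\HD^e_\mu}Q\subset H_N$ once $M\gtrsim_{\lambda,\Lambda,n}N$. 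The same argument is applied to $\nu^e$ (which is $n$-rectifiable since $\nu^e\ll\sigma\approx\H^n|_\Gamma$), and finally one passes from a $\nu^e$-smallness to a $\mu$-smallness via $\restr{\mu}{\Gamma}\le\nu^e$---a step you should also make explicit.

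In short: your outline is right, but replace the false weak $(1,1)$ claim by the rectifiability-based argument that $\mu(H_N)\to 0$; this is also why the dependence of $M$ on $\mu$ in the statement is genuinely on $\mu$, not merely on $\mu(\R^d)$.
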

\begin{proof}
	Let $e\in\{0,1\}^n$. It is easy to see that the measure of $\LD^e$ is small: for every $Q\in \LD^{e}$ we have $\mu(Q)\le M^{-1}\sigma(Q)$, so 
	\begin{equation}\label{eq:LD est}
	\mu\bigg(\bigcup_{Q\in \LD^e}Q\bigg)\le  M^{-1}\sigma(R^e)\approx M^{-1}.
	\end{equation}		
	
	To estimate the measure of $\HD_{\mu}^e$, define for some big $N\gg 1$
	\begin{equation*}
	H_N = \{x\in\R^d : \mu(B(x,r))> Nr^n\ \text{for some}\ r\in(0,1)\}.
	\end{equation*}
	Since $\mu$ is $n$-rectifiable, the density $\Theta^n(x,\mu)$ exists, and is positive and finite $\mu$-a.e. Moreover, recall that $\mu(\R^d)$ is finite. This implies that for $N=N(\mu,\varepsilon,n)$ big enough
	\begin{equation*}
	\mu(H_N)\le\frac{\varepsilon}{2^{n+2}}.
	\end{equation*}
	
	We will show that, if $M$ is chosen big enough, then for all $Q\in \HD^e_{\mu}$ we have $Q\subset H_N $. Indeed, let $x\in Q\in \HD^e_{\mu}$. Then $B(x,2\lambda r(\widetilde{B}_Q)) \supset \lambda\widetilde{B}_Q$, and so
	\begin{equation*}
	\mu(B(x,2\lambda r(\widetilde{B}_Q)))\ge \mu(\lambda\widetilde{B}_Q)> M\ell(Q)^n > N (6\lambda\Lambda \diam(Q))^n=N (2\lambda r(\widetilde{B}_Q))^n,
	\end{equation*}
	for $M$ big enough with respect to $N, \lambda, \Lambda, n$. Moreover, note that for $Q\in \HD^e_{\mu}$ we have
	\begin{equation*}
	\frac{\mu(\R^d)}{M}>\ell(Q)^n\approx_{\Lambda} r(\widetilde{B}_Q)^n,
	\end{equation*}
	and so taking $M$ big enough (depending on $\mu(\R^d), \lambda,\Lambda, n$) we can ensure that all $Q\in \HD^e_{\mu}$ satisfy $2\lambda r(\widetilde{B}_Q)<1$. Thus, $x\in H_N$, and we conclude that 
	\begin{equation}\label{eq:HDmu est}
	\mu\bigg(\bigcup_{Q\in \HD^e_{\mu}} Q\bigg)\le\mu(H_N)\le\frac{\varepsilon}{2^{n+2}}.
	\end{equation}
	Since $\nu^e$ is a finite $n$-rectifiable measure, we can argue in the same way as above to get
	\begin{equation*}\label{eq:HDnu est}
	\nu^e\bigg(\bigcup_{Q\in \HD^e_{\nu}} Q\bigg)\le\frac{\varepsilon}{2^{n+2}}.
	\end{equation*}
	Smallness of $\mu(\bigcup_{Q\in \HD_{\nu}^e} Q)$ follows from the fact that $\restr{\mu}{\Gamma}\le\nu^e$.
	Putting this together with \eqref{eq:LD est} and \eqref{eq:HDmu est} we get
	\begin{equation*}
	\mu\bigg(\bigcup_{Q\in\Se}Q\bigg)<\frac{\varepsilon}{2^n}.
	\end{equation*}
	We take $M$ so big that the above holds for all $e\in\{0,1\}^n$, and the proof is finished.
\end{proof}
	
%	Set
%	\begin{equation*}
%	R^e_G = R^e\setminus \bigcup_{P\in\Se} P,
%	\end{equation*}
%	where $G$ stands for ``good''. Note that the points from $x\in R^e_G$ are not contained in any stopping cube, and so there are arbitrarily small cubes $Q\in\T^e$ containing $x$. 
	
	For each $e\in\{0,1\}^n,\ k= 0, 1, 2,\dots,$ let $g^e_k$ be the density of $\nu^e_k$ with respect to $\sigma$. Note that, due to the definition of $\Tree^e$, for any $Q\in\Tree^e$ we have
	\begin{equation*}
	M^{-1}\,\ell(Q)^n\le \nu^e_k(Q)\le \nu^e_k(B_Q)\le M\,\ell(Q)^n.
	\end{equation*} 
	Hence, given a cube $Q\in\T^e$ with $\ell(Q)=2^{-k}$, we can estimate $\widetilde{\alpha}_{\nu^e_k,2}(Q)^2$ using \lemref{lem:estimate alpha tilde unbdd density} (applied to $\nu^e_k$ and $\Tree= \{P\in \T^e\ :\ P\subset Q \}$) to get 
	\begin{equation}\label{eq:approximating estimate 4}
	\widetilde{\alpha}_{\nu^e_k,2}(Q)^2\lesssim_{\varepsilon_0,M} \alpha_{\sigma,2}(B_Q)^2 + \sum_{\substack{P\in\T^e\\P\subset Q}}\lVert \Delta^{\sigma}_P g^e_k\rVert ^2_{L^2(\sigma)}\frac{\ell(P)}{\ell(Q)^{n+1}} + \sum_{\substack{S\in\Se\\S\subset Q}}\frac{\ell(S)^2}{\ell(Q)^{n+2}}\nu^e_k(S).
	\end{equation}
	The following lemma states that the right hand side of this estimate can be made independent of $k$.
	
	\begin{lemma}\label{lem:another alpha tilde estimate}
	For all $Q\in\Tree^e$ with $\ell(Q)=2^{-k},\ k\ge 0,$ we have
	\begin{equation}\label{eq:final estimate of alpha tilde}
		\widetilde{\alpha}_{{\nu}^e_k,2}(Q)^2\lesssim_{\varepsilon_0,M} \alpha_{\sigma,2}(B_Q)^2 + \sum_{\substack{P\in\T^e\\P\subset Q}}\lVert\Delta^{\sigma}_P g^e_0\rVert^2_{L^2(\sigma)}\frac{\ell(P)}{\ell(Q)^{n+1}} + \sum_{\substack{S\in \Se\\S\subset Q}}\frac{\ell(S)^{2}}{\ell(Q)^{n+2}}\nu^e(S).
	\end{equation}
	Moreover,
	\begin{equation}\label{eq:sum of martingales yet again}
		\sum_{P\in\T^e}\lVert \Delta^{\sigma}_P g^e_0\rVert ^2_{L^2(\sigma)}\lesssim M\lVert g^e_0\rVert_{L^1(\sigma)}= M \nu^e_0(\Gamma)\le M\mu(\R^d).
	\end{equation}
	\end{lemma}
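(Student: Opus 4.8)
The plan is to compare the bounded-density estimate \eqref{eq:approximating estimate 4}, valid for $\nu^e_k$ at scale $\ell(Q)=2^{-k}$, with the analogous estimate one would write for the ``full'' measure $\nu^e$ (equivalently $\nu^e_0$), and to show that the three terms on the right-hand side only decrease (up to constants) when we replace $g^e_k$ by $g^e_0$ and $\nu^e_k(S)$ by $\nu^e(S)$. The point is that $\nu^e_k$ is obtained from $\nu^e$ by discarding the Whitney cubes of sidelength $>2^{-k}$, so on cubes $P$ of sidelength $\le 2^{-k}$ the two measures should behave comparably, while the contributions of the discarded Whitney cubes are genuinely controlled by the larger object $\nu^e$.

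First I would record the monotonicity $\nu^e_k\le\nu^e_{k'}\le\nu^e$ for $k\ge k'$, which follows immediately from $\W^e_k\subset\W^e_{k'}\subset\W^e$ and the definition of the $g_P$. In particular $\nu^e_k(S)\le\nu^e(S)$, which handles the third (stopping) term in \eqref{eq:approximating estimate 4} for free. Next, for the martingale term, I would fix $Q\in\Tree^e$ with $\ell(Q)=2^{-k}$ and a cube $P\in\Tree^e$, $P\subset Q$, so $\ell(P)\le 2^{-k}$. The key observation is that for a Whitney cube $R'\in\W^e$ with $\ell(R')>2^{-k}\ge\ell(P)$, the function $g_{R'}=\tfrac{\mu(R')}{\ell(R')^n}\one_{\Pi_\Gamma(R')}$ is constant on $\Pi_\Gamma(R')$, which (using property \eqref{eq:property of Whitney cubes} of Whitney cubes and the fact that $\Pi_\Gamma$ is bilipschitz, since $\Gamma$ is $1$-Lipschitz) is a set that contains $P$ whenever it meets $P$; hence $\Delta^\sigma_P g_{R'}\equiv 0$. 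Consequently $\Delta^\sigma_P g^e_k=\Delta^\sigma_P g^e_0$ for every such $P\subset Q$, so the martingale sum over $P\subset Q$ is literally the \emph{same} whether computed with $g^e_k$ or with $g^e_0$. (If the cube-by-cube cancellation needs a little care at the boundary of $\W^e$-cubes, one falls back on the cruder bound $\sum_{P\subset Q}\lVert\Delta^\sigma_P g^e_k\rVert^2\le\sum_{P\subset Q}\lVert\Delta^\sigma_P g^e_0\rVert^2$ coming from $\nu^e_k\le\nu^e$ together with the tree structure, which is all that \eqref{eq:final estimate of alpha tilde} asks for.) Plugging these two comparisons into \eqref{eq:approximating estimate 4} gives \eqref{eq:final estimate of alpha tilde}.

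For the second assertion \eqref{eq:sum of martingales yet again}, I would apply the second conclusion of \lemref{lem:estimate alpha tilde unbdd density} (the bound \eqref{eq:sum of martingales on tree unbdd density}) to the measure $\nu^e_0=g^e_0\sigma$ on each of the finitely many trees with roots $R_i^e$; since $0\le g^e_0$ and, on $\Tree^e$, $\nu^e_0$ satisfies the density bounds $M^{-1}\ell(P)^n\le\nu^e_0(P)\le\nu^e_0(B_P)\le M\ell(P)^n$ by construction of $\Tree^e$, we get $\sum_{P\in\Tree^e}\lVert\Delta^\sigma_P g^e_0\rVert^2_{L^2(\sigma)}\lesssim M\lVert g^e_0\rVert_{L^1(\sigma)}=M\,\nu^e_0(\Gamma)$, and finally $\nu^e_0(\Gamma)=\restr{\mu}{\Gamma}(\Gamma)+\sum_{P\in\W^e_0}\mu(P)\le 2\mu(\R^d)\lesssim\mu(\R^d)$ since the Whitney cubes in $\W^e_0$ are disjoint and contained in $\R^d\setminus\Gamma$. (If one wants the constant exactly as stated, sum over the $R_i^e$'s and note each $\Delta^\sigma_P$ is supported in a single root.)

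The main obstacle is the martingale comparison: making precise the claim that large Whitney cubes contribute nothing to $\Delta^\sigma_P$ for small $P$. One has to be careful that $\Pi_\Gamma(R')$ for $R'\in\W^e$ is a dyadic-type region in the grid $\DGe$ that is ``aligned'' with the cubes $P\in\Tree^e\subset\DGe$ at the relevant scales — this is exactly why the one-third-trick grid $\DGe$ and the Whitney decomposition $\W^e$ are taken with the \emph{same} translation parameter $e$. Once that alignment is used, the cancellation $\Delta^\sigma_P g_{R'}=0$ for $\ell(R')>\ell(P)$ is clean, and the rest is bookkeeping. Everything else — monotonicity of the $\nu^e_k$, the density bounds on $\Tree^e$, and the $L^1$ bound on $g^e_0$ — is immediate from the definitions and from \remref{remark:mu compactly supported}.
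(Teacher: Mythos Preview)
Your proposal is correct and follows essentially the same route as the paper: establish the identity $\Delta^\sigma_P g^e_k = \Delta^\sigma_P g^e_0$ for every $P\in\Tree^e$ with $\ell(P)\le 2^{-k}$, use $\nu^e_k\le\nu^e$ on the stopping term, and invoke \lemref{lem:estimate alpha tilde unbdd density} with $\nu^e_0$ on the trees rooted at the $R_i^e$ for \eqref{eq:sum of martingales yet again}. Two small corrections: the reason $\Pi_\Gamma(R')$ either contains $P$ or misses it is precisely the dyadic alignment you name at the end --- $\Pi_\Gamma(R')\in\DGe$ with $\ell(\Pi_\Gamma(R'))=\ell(R')>\ell(P)$ --- and not property \eqref{eq:property of Whitney cubes} or bilipschitzness of $\Pi_\Gamma$; and your proposed fallback inequality $\sum\lVert\Delta^\sigma_P g^e_k\rVert^2\le\sum\lVert\Delta^\sigma_P g^e_0\rVert^2$ does not follow from $\nu^e_k\le\nu^e$ alone (monotonicity of measures does not imply monotonicity of martingale-difference norms), but since the exact equality holds the fallback is never needed.
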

\begin{proof}	
	We claim that for $P\in\T^e$ with $\ell(P)\le 2^{-k}$ (in particular, for $P\in\T^e$ such that $P\subset Q$) we have
	\begin{equation}\label{eqn:deltas are equal}
	\Delta^{\sigma}_P g^e_k = \Delta^{\sigma}_P g^e_0.
	\end{equation}
	Indeed, for $x\not\in P$ both sides of \eqref{eqn:deltas are equal} are zero. For $x\in P'\subset P$, where $P'\in\T^e\cup\Se$ is a child of $P$, we have
	\begin{multline*}
	\Delta^{\sigma}_P g^e_0(x) - \Delta^{\sigma}_P g^e_k(x)=  \frac{\nu_0^e(P')-\nu^e_k(P')}{\ell(P')^n} -\frac{\nu_0^e(P)-\nu^e_k(P)}{\ell(P)^n}\\
	= \ell(P')^{-n}\left(\sum_{S\in\W^e_0\setminus\W_k^e}\frac{\mu(S)}{\ell(S)^n}\sg(P'\cap\Pi_{\Gamma}(S))\right)
	- \ell(P)^{-n}\left(\sum_{S\in\W^e_0\setminus\W_k^e}\frac{\mu(S)}{\ell(S)^n}\sg(P\cap\Pi_{\Gamma}(S))\right).
	\end{multline*}
	The Whitney cubes $S$ in the sums above above satisfy $\ell(S)>2^{-k}\ge \ell(P)$, and moreover we have $\Pi_{\Gamma}(S)\in\DGe$. Hence, we either have $P\cap\Pi_{\Gamma}(S)=P$ or $P\cap\Pi_{\Gamma}(S)=\varnothing $. The same is true for $P'$. Moreover, we have $P\cap\Pi_{\Gamma}(S)\neq\varnothing$ if and only if $P'\cap\Pi_{\Gamma}(S)\neq\varnothing$. It follows that the right hand side above is equal to
	\begin{equation*}
	\sum_{\substack{S\in\W^e_0\setminus\W_k^e\\P'\cap\Pi_{\Gamma}(S)\not=\varnothing}}\frac{\mu(S)}{\ell(S)^n}\ - \sum_{\substack{S\in\W^e_0\setminus\W_k^e\\P\cap\Pi_{\Gamma}(S)\not=\varnothing}}\frac{\mu(S)}{\ell(S)^n} = 0.
	\end{equation*}
	Thus $\Delta^{\sigma}_P g^e_k = \Delta^{\sigma}_P g^e_0.$ Using this equality, and also the fact that $\nu^e_k\le \nu^e$, we transform \eqref{eq:approximating estimate 4} into
	\begin{equation}\label{eq:yet another eqn}
	\widetilde{\alpha}_{{\nu}^e_k,2}(Q)^2\lesssim_{\varepsilon_0,M} \alpha_{\sigma,2}(B_Q)^2 + \sum_{\substack{P\in\T^e\\P\subset Q}}\lVert\Delta^{\sigma}_P g^e_0\rVert^2_{L^2(\sigma)}\frac{\ell(P)}{\ell(Q)^{n+1}} + \sum_{\substack{P\in \Se\\P\subset Q}}\frac{\ell(P)^{2}}{\ell(Q)^{n+2}}\nu^e(P).
	\end{equation}
	Concerning \eqref{eq:sum of martingales yet again}, it is an immediate consequence of \eqref{eq:sum of martingales on tree unbdd density} when we apply \lemref{lem:estimate alpha tilde unbdd density} to $\nu^e_0$ and the trees $\{Q\in\Tree^e\ :\ Q\subset R^e_i \}$ (recall that the union of such trees gives the entire $\Tree^e$).
\end{proof}

	We finally define $\T$ as the collection of cubes $Q\in\DG$ such that for every $e\in\{0,1\}^n$ there exists $P\in\T^e$ satsfying $\ell(P)=\ell(Q)$ and $P\cap Q\not = \varnothing $. It is easy to check that $\T$ is indeed a tree, and that the stopping cubes $\Stop=\Stop(\Tree)$ satisfy $\bigcup_{Q\in\Stop}Q\subset\bigcup_e\bigcup_{Q\in\Se}Q$. Thus,
	\begin{equation*}
	\mu\bigg(\bigcup_{Q\in\Stop}Q\bigg)\le \sum_{e\in\{0,1\}^n}\mu\bigg(\bigcup_{Q\in\Se}Q\bigg)\overset{\eqref{eq:Stope small}}{\le}\varepsilon.
	\end{equation*}
	Moreover, $\T\subset\T_{(0,\dots,0)}$, so for all $Q\in\T$ 
	\begin{align*} 
	\mu(\lambda\widetilde{B}_Q)&\le M\ell(Q)^n,\\ 
	\mu(Q)&\ge M^{-1}\ell(Q)^n.
	\end{align*}
	The only thing that remains to be shown is the packing condition \eqref{eq:packing condition on tree}.
	\begin{lemma}
		We have 
		\begin{equation*}
		\sum_{\substack{Q\in \T}} \widehat{\alpha}_{\nu_Q,2}(\widetilde{B}_Q)^2 \ell(Q)^n <\infty.
		\end{equation*}
	\end{lemma}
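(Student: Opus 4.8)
The plan is to control each term $\widehat{\alpha}_{\nu_Q,2}(\widetilde{B}_Q)$ by transferring the estimate from the ball $\widetilde{B}_Q$ to the cube $P_Q\in\EDG$ furnished by \lemref{lem:Qtilde_corresp_to_Q}, combining \lemref{lem:alphas on cubes} with \lemref{lem:another alpha tilde estimate}, and then summing over $Q\in\T$ while exploiting that the assignment $Q\mapsto P_Q$ has bounded overlap. Since $\T\subset\DG(R)$ with $\ell(R)=1$, only finitely many $Q\in\T$ have $\ell(Q)>2^{-k_0}$, and those contribute a finite total, so it suffices to bound the sum over $Q\in\T$ with $\ell(Q)\le 2^{-k_0}$. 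Fix such a $Q$ and write $e=e(Q)$, $k=k(Q)$, so that $\nu_Q=\nu^e_k$ and $\ell(P_Q)=2^{-k}=2^{k_0}\ell(Q)\approx\ell(Q)$.

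First I would observe that $P_Q\in\T^e$: since $Q\subset\widetilde{B}_Q\subset 3\widetilde{B}_Q\subset V(P_Q)$ and $Q\subset\Gamma$, we get $Q=\Pi_{\Gamma}(Q)\subset P_Q$, and by the definition of $\T$ there is $P\in\T^e$ with $\ell(P)=\ell(Q)$ and $P\cap Q\neq\varnothing$, hence $P\subset P_Q$ by dyadic nesting; thus $P_Q$ cannot be contained in any stopping cube (else so would $P$), and $P_Q\subset R^e_i$ for the $i$ with $P\subset R^e_i$ (using $\ell(P_Q)\le1$), so $P_Q\in\T^e$. Next I would verify the hypotheses of \lemref{lem:alphas on cubes} for $\nu=\nu_Q$, the ball $\widetilde{B}_Q$, and the cube $P_Q$: the inclusion $3\widetilde{B}_Q\subset V(P_Q)\cap B_{P_Q}$ (from \lemref{lem:Qtilde_corresp_to_Q}, choosing $k_0$ large enough that $3\widetilde{B}_Q$ also fits inside $B_{P_Q}$), the scale comparability $r(\widetilde{B}_Q)\approx\ell(P_Q)$, the absolute continuity $\nu_Q\ll\sigma$ (which holds because $\mu\ll\H^n$, hence $\restr{\mu}{\Gamma}\ll\Hn{\Gamma}\approx\sigma$), and the two-sided density estimate $\nu_Q(\widetilde{B}_Q)\approx\nu_Q(P_Q)\approx\ell(P_Q)^n$ with constants depending on $M$ — the upper bounds from $\nu_Q\le\nu^e$ and $P_Q\notin\HD_{\nu,0}^{e}$, the lower bounds from $\restr{\mu}{\Gamma}\le\nu_Q$ together with $Q\subset P_Q\cap\widetilde{B}_Q$ and $\mu(Q)\ge M^{-1}\ell(Q)^n$. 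This yields $\widehat{\alpha}_{\nu_Q,2}(\widetilde{B}_Q)\lesssim_M\widetilde{\alpha}_{\nu_Q,2}(P_Q)+\alpha_{\sigma,2}(B_{P_Q})$; applying \lemref{lem:another alpha tilde estimate} to $P_Q\in\T^e$ (of sidelength $2^{-k}$) to bound $\widetilde{\alpha}_{\nu^e_k,2}(P_Q)^2$ and combining, we arrive at
\begin{multline*}
\widehat{\alpha}_{\nu_Q,2}(\widetilde{B}_Q)^2\lesssim_{M}\alpha_{\sigma,2}(B_{P_Q})^2\\
+\sum_{\substack{P'\in\T^e\\P'\subset P_Q}}\lVert \Delta^{\sigma}_{P'} g^e_0\rVert^2_{L^2(\sigma)}\frac{\ell(P')}{\ell(P_Q)^{n+1}}+\sum_{\substack{S\in\Se\\S\subset P_Q}}\frac{\ell(S)^2}{\ell(P_Q)^{n+2}}\nu^e(S).
\end{multline*}

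Then I would multiply by $\ell(Q)^n\approx\ell(P_Q)^n$ and sum over $Q\in\T$ with $\ell(Q)\le2^{-k_0}$. For fixed $e$ and fixed $P\in\T^e$ there are at most $2^{nk_0}$ cubes $Q$ with $e(Q)=e$ and $P_Q=P$, since such $Q$ are pairwise disjoint $\DG$-cubes of sidelength $2^{-k_0}\ell(P)$ contained in $P$; hence the whole sum is controlled by
\begin{multline*}
\sum_{e}\sum_{P\in\T^e}\alpha_{\sigma,2}(B_{P})^2\ell(P)^n\\
+\sum_{e}\sum_{P\in\T^e}\sum_{\substack{P'\in\T^e\\P'\subset P}}\lVert \Delta^{\sigma}_{P'} g^e_0\rVert^2_{L^2(\sigma)}\frac{\ell(P')}{\ell(P)}\\
+\sum_{e}\sum_{P\in\T^e}\sum_{\substack{S\in\Se\\S\subset P}}\frac{\ell(S)^2}{\ell(P)^2}\nu^e(S).
\end{multline*}
In the second and third sums I would exchange the order of summation: for fixed $P'$ (resp.\ $S$) the inner geometric sum $\sum_{P\supset P'}\ell(P')/\ell(P)$ (resp.\ $\sum_{P\supset S}\ell(S)^2/\ell(P)^2$) is bounded by an absolute constant, leaving $\sum_e\sum_{P'\in\T^e}\lVert\Delta^{\sigma}_{P'}g^e_0\rVert^2_{L^2(\sigma)}\lesssim M\sum_e\nu^e_0(\Gamma)\le 2^nM\mu(\R^d)$ by \eqref{eq:sum of martingales yet again}, and $\sum_e\sum_{S\in\Se}\nu^e(S)\le\sum_e\nu^e(\Gamma)=2^n\mu(\R^d)$ by disjointness of $\Se$; both are finite. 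For the first sum I would use that $\sigma\approx\Hn{\Gamma}$ is the surface measure of an $n$-dimensional Lipschitz graph, hence $n$-AD-regular and uniformly rectifiable, so Tolsa's characterization \thmref{thm:characterization UR} — in its standard dyadic reformulation, obtained from the integral version via $n$-AD-regularity and a comparison of $\alpha_{\sigma,2}$ at comparable scales — yields $\sum_{P\in\DGe(R^e_i)}\alpha_{\sigma,2}(B_P)^2\ell(P)^n\lesssim\ell(R^e_i)^n=1$, and there are finitely many pairs $(e,i)$. Altogether $\sum_{Q\in\T}\widehat{\alpha}_{\nu_Q,2}(\widetilde{B}_Q)^2\ell(Q)^n<\infty$.

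The main obstacle here is not analytic but a matter of geometric bookkeeping: establishing $P_Q\in\T^e$, checking every hypothesis of \lemref{lem:alphas on cubes} for the pair $(\widetilde{B}_Q,P_Q)$ — in particular the two-sided density bounds and the inclusion $3\widetilde{B}_Q\subset V(P_Q)\cap B_{P_Q}$, which may force a re-examination of the choice of $k_0$ — controlling the multiplicity of $Q\mapsto P_Q$, and making rigorous the passage from the integral Carleson estimate for $\alpha_{\sigma,2}$ to its dyadic form. The genuinely analytic ingredient, the martingale packing inequality, has already been established in \lemref{lem:estimate alpha tilde from GAFA} and \lemref{lem:another alpha tilde estimate}.
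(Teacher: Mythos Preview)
Your proposal is correct and follows essentially the same route as the paper's proof: reduce to $\ell(Q)\le 2^{-k_0}$, apply \lemref{lem:alphas on cubes} with $B=\widetilde{B}_Q$ and $Q=P_Q$ to pass from $\widehat{\alpha}_{\nu_Q,2}(\widetilde{B}_Q)$ to $\widetilde{\alpha}_{\nu_Q,2}(P_Q)+\alpha_{\sigma,2}(B_{P_Q})$, feed in \lemref{lem:another alpha tilde estimate}, use the bounded multiplicity of $Q\mapsto P_Q$, swap the order of summation in the martingale and stopping sums, and finish with \thmref{thm:characterization UR} and \eqref{eq:sum of martingales yet again}. Your write-up is in fact more careful than the paper's on several points (the verification that $P_Q\in\T^e$, the density hypotheses of \lemref{lem:alphas on cubes}, and the inclusion $3\widetilde{B}_Q\subset B_{P_Q}$, which indeed follows from the same lower bound on $2^{k_0}$ already imposed in the proof of \lemref{lem:Qtilde_corresp_to_Q}).
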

	\begin{proof}
	Recall that in \lemref{lem:Qtilde_corresp_to_Q} we defined a constant $k_0>0$ such that for any $Q\in\D_{\Gamma},\ \ell(Q)\le 2^{-k_0}$, there exists a cube $P_Q\in\EDG$ satisfying $3\widetilde{B}_Q\subset V(P_Q),\ \ell(P_Q)=2^{k_0}\ell(Q)$. Since there are only finitely many $Q\in\Tree$ with $\ell(Q)> 2^{-k_0}$, we may ignore them in the estimates that follow.
		
	Suppose $Q\in\T$ and $\ell(Q)\le 2^{-k_0}$, let $P_Q$ be as above. Recall that $\nu_Q=\nu^{e(Q)}_{k(Q)}$, where $e=e(Q),\ k=k(Q)$ are such that $P_Q\in\DGe$ and $\ell(P_Q)=2^{-k}$. 
	
	We defined $\T$ in such a way that necessarily $P_Q\in\T^e$. It follows from \lemref{lem:alphas on cubes} applied with $\nu=\nu_Q,\ B=\widetilde{B}_Q,\ Q=P_Q,$ that
	\begin{equation*}
	\widehat{\alpha}_{\nu_Q,2}(\widetilde{B}_Q)\lesssim_{\varepsilon_0,M,k_0}\widetilde{\alpha}_{\nu_Q,2}(P_Q) + \alpha_{\sigma,2}(B_{P_Q}).
	\end{equation*}
	We use \eqref{eq:final estimate of alpha tilde} and the inequality above to obtain
	\begin{equation*} 
	\widehat{\alpha}_{\nu_Q,2}(\widetilde{B}_Q)^2\lesssim_{\varepsilon_0,M,k_0}\alpha_{\sigma,2}(B_{P_Q})^2+ \sum_{\substack{P\in\T^e\\P\subset P_Q}}\lVert\Delta^{\sigma}_P g^e_0\rVert^2_{L^2(\sigma)}\frac{\ell(P)}{\ell(P_Q)^{n+1}} + \sum_{\substack{S\in \Se\\S\subset P_Q}}\frac{\ell(S)^{2}}{\ell(P_Q)^{n+2}}\nu^e(S).
	\end{equation*}
	Taking into account that each $P_Q\in\T^e$ may correspond to only a bounded number of $Q\in\T$, and that $\ell(Q)\approx_{k_0}\ell(P_Q)$, we get
	\begin{multline*}
	\sum_{\substack{Q\in\T: P_Q\in\T^e}}	\widehat{\alpha}_{\nu_Q,2}(\widetilde{B}_Q)^2\ell(Q)^n \lesssim_{\varepsilon_0,M,k_0}\sum_{Q'\in\T^e}\alpha_{\sigma,2}(B_{Q'})^2\ell(Q')^n\\ +\sum_{Q'\in\T^e}\sum_{\substack{P\in\T^e\\P\subset Q'}}\lVert\Delta^{\sigma}_P g^e_0\rVert^2_{L^2(\sigma)}\frac{\ell(P)}{\ell(Q')}
	+ \sum_{Q'\in\T^e}\sum_{\substack{S\in \Se\\S\subset Q'}}\frac{\ell(S)^{2}}{\ell(Q')^{2}}\nu^e(S).
	\end{multline*}
	The first sum from the right hand side is finite because $\sigma$ is uniformly rectifiable, see \thmref{thm:characterization UR}. We estimate the second sum by changing the order of summation:
	\begin{multline*}
	\sum_{Q'\in\T^e}\sum_{\substack{P\in\T^e\\P\subset Q'}}\lVert\Delta^{\sigma}_P g^e_0\rVert^2_{L^2(\sigma)}\frac{\ell(P)}{\ell(Q')} = \sum_{\substack{P\in\T^e}} \lVert\Delta^{\sigma}_P g^e_0\rVert^2_{L^2(\sigma)} \sum_{\substack{Q'\in\T^e\\Q'\supset P}}\frac{\ell(P)}{\ell(Q')}\\
	\lesssim \sum_{\substack{P\in\T^e}} \lVert\Delta^{\sigma}_P g^e_0\rVert^2_{L^2(\sigma)} \overset{\eqref{eq:sum of martingales yet again}}{\lesssim} M\mu(\R^d)<\infty.
	\end{multline*}
%	Recall that $g^e_0$ is the density of $\tilde{\nu}^e_0$. Since $\lVert\tilde{\nu}^e_0\rVert\le\lVert\mu\rVert<\infty$, we have $g^e_0\in L^1(\sigma)$. On the other hand, $g^e_0\in L^{\infty}(\sigma)$ by \lemref{lem:density bounded}. It follows that $\lVert g^e_0\rVert^2_{L^2(\sigma)}$ is finite.
	
	The third sum is treated similarly:
	\begin{equation*}
	\sum_{Q'\in\T^e}\sum_{\substack{S\in \Se\\S\subset Q'}}\frac{\ell(S)^{2}}{\ell(Q')^{2}}\nu^e(S) = \sum_{S\in\Se}\nu^e(S)\sum_{\substack{Q'\in \T^e\\Q'\supset S}}\frac{\ell(S)^{2}}{\ell(Q')^{2}}\lesssim \sum_{S\in\Se}\nu^e(S)<\infty. 
	\end{equation*}
	Thus,
	\begin{equation*}
	\sum_{\substack{Q\in\T}}\widehat{\alpha}_{\nu_Q,2}(\widetilde{B}_Q)^2\ell(Q)^n = \sum_{e\in\{0,1\}^n}\sum_{\substack{Q\in\T: P_Q\in\T^e}}	\widehat{\alpha}_{\nu_Q,2}(\widetilde{B}_Q)^2\ell(Q)^n<\infty.
	\end{equation*}
	\end{proof}
%\end{proof}
\section{From Approximating Measures to \texorpdfstring{$\mu$}{mu}}\label{sec:pass from approximating to mu}
%Going back from  \texorpdfstring{$\nu_Q$}{nu\_Q} to \texorpdfstring{$\mu$}{mu}
To prove \lemref{lem:general_with_Gamma} we need to pass from the estimates on $\widehat{\alpha}_{\nu_Q,2}(\widetilde{B}_Q)$ shown in \lemref{lem:T_lemma_general} to estimates on $\widehat{\alpha}_{\mu,2}(B_Q)$. 

Recall that $K>20$ is the constant such that for all Whitney cubes $Q\in\W^e$ we have $KQ\cap\Gamma\ne\varnothing$, and $k_0=k_0(n,\Lambda)$ is an integer from \lemref{lem:Qtilde_corresp_to_Q}.
\begin{lemma}\label{lem:estimate alpha mu with alpha sigma}
	There exists $\lambda=\lambda(k_0,K,n,d)>3$ such that if $M=M(\varepsilon,\lambda,\Lambda,n,d,\mu)$ and $\Tree=\Tree(\lambda,M,\varepsilon)$ are as in \lemref{lem:T_lemma_general}, then for all $Q\in\Tree$ with $\ell(Q)\le 2^{-k_0}$
	\begin{equation*}
	\widehat{\alpha}_{\mu,2}(B_Q)^2\lesssim_{M,\lambda,\Lambda}\widehat{\alpha}_{\nu_Q,2}(\widetilde{B}_Q)^2 + \alpha_{\sigma,2}(\widetilde{B}_Q)^2 + \frac{1}{\ell(Q)^{n+2}}\sum_{\substack{P\in \W_Q\\ P\subset \lambda\tilde{B}_Q}}\mu(P)\ell(P)^2.
	\end{equation*}
\end{lemma}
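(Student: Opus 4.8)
The plan is to bound $\widehat{\alpha}_{\mu,2}(B_Q)$ by first replacing $\mu$ with the approximating measure $\nu_Q$ on the ball $B_Q$ -- which amounts to projecting the Whitney cubes of $\W_Q$ onto $\Gamma$ -- and then passing from $B_Q$ to the larger ball $\widetilde{B}_Q$ by means of Lemma~\ref{lem:alpha_on_balls}. To make the first step legitimate I would choose $\lambda=\lambda(k_0,K,n,d)>3$ large enough to exploit the two basic properties of a Whitney cube $P\in\W^{e(Q)}$: that $\dist(P,\Gamma)\approx\ell(P)$ (a consequence of $10P\subset\R^d\setminus\Gamma$ and $KP\cap\Gamma\neq\varnothing$), so that $\Pi_\Gamma$ displaces points of $P$ by $\lesssim\ell(P)$; and that, by \eqref{eq:k0 big}, any such $P$ meeting $3B_Q$ satisfies $\ell(P)\lesssim\diam(Q)$, hence $\ell(P)\le 2^{k_0}\ell(Q)$ and $P\in\W_Q$. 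For $\lambda$ large, this same geometry forces every $P\in\W^{e(Q)}$ meeting $3B_Q$, as well as every $P\in\W_Q$ whose projection $\Pi_\Gamma(P)$ meets $3\widetilde{B}_Q$, to be contained in $\lambda\widetilde{B}_Q$.

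Next I would record the Ahlfors-regularity-type bounds that Lemma~\ref{lem:alpha_on_balls} will need. Since $Q\in\Tree$, Lemma~\ref{lem:T_lemma_general} gives $M^{-1}\ell(Q)^n\le\mu(Q)$ and $\mu(\lambda\widetilde{B}_Q)\le M\ell(Q)^n$; together with $Q\subset B_Q\subset\widetilde{B}_Q\subset\lambda\widetilde{B}_Q$ and $r(B_Q)\approx r(\widetilde{B}_Q)\approx\ell(Q)$, this yields $\mu(B_Q)\approx_M r(B_Q)^n$. Because of the choice of $\lambda$, all Whitney cubes contributing to $\nu_Q$ inside $3\widetilde{B}_Q$ already lie in $\lambda\widetilde{B}_Q$, so $\nu_Q(3\widetilde{B}_Q)\le\mu(3\widetilde{B}_Q)+\mu(\lambda\widetilde{B}_Q)\lesssim_M\ell(Q)^n$, while $\nu_Q(B_Q)\ge\mu(Q)\ge M^{-1}\ell(Q)^n$; hence $\nu_Q(B_Q)\approx_{M,\lambda,\Lambda}\nu_Q(3\widetilde{B}_Q)\approx r(\widetilde{B}_Q)^n$. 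Finally $\sigma\approx\HG$ is $n$-AD-regular.

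For the comparison step I would use that $\widehat{\alpha}_{\mu,2}(B_Q)$ and $\widehat{\alpha}_{\nu_Q,2}(B_Q)$ are computed with the \emph{same} plane $L_{B_Q}$. Setting $\theta=\big(\int\varphi_Q\,d\mu\big)/\big(\int\varphi_Q\,d\nu_Q\big)$, which is $\approx_M 1$ by the bounds above, the triangle inequality for $W_2$ together with its scaling gives
\[
W_2\big(\varphi_Q\mu,\,a\varphi_Q\Hn{L_{B_Q}}\big)\lesssim_M W_2\big(\varphi_Q\mu,\,\theta\varphi_Q\nu_Q\big)+\ell(Q)^{1+n/2}\,\widehat{\alpha}_{\nu_Q,2}(B_Q).
\]
Once the first term on the right is shown to be $\lesssim_{M,\lambda}\big(\sum_{P\in\W_Q,\,P\subset\lambda\widetilde{B}_Q}\mu(P)\ell(P)^2\big)^{1/2}$, dividing by $r(B_Q)\mu(B_Q)^{1/2}\approx_M\ell(Q)^{1+n/2}$ and squaring gives
\[
\widehat{\alpha}_{\mu,2}(B_Q)^2\lesssim_{M,\lambda}\widehat{\alpha}_{\nu_Q,2}(B_Q)^2+\frac{1}{\ell(Q)^{n+2}}\sum_{\substack{P\in\W_Q\\P\subset\lambda\widetilde{B}_Q}}\mu(P)\ell(P)^2.
\]
Finally, applying Lemma~\ref{lem:alpha_on_balls} with $B_1=B_Q$ and $B_2=\widetilde{B}_Q$ -- admissible since $3B_Q\subset\widetilde{B}_Q$ because $\Lambda\ge3$, and the mass hypotheses for $\nu_Q$ were verified above -- replaces $\widehat{\alpha}_{\nu_Q,2}(B_Q)$ by $\widehat{\alpha}_{\nu_Q,2}(\widetilde{B}_Q)+\alpha_{\sigma,2}(\widetilde{B}_Q)$, and the proof is complete.

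The heart of the matter, and the step I expect to be the main obstacle, is the estimate $W_2\big(\varphi_Q\mu,\theta\varphi_Q\nu_Q\big)^2\lesssim_{M,\lambda}\sum_P\mu(P)\ell(P)^2$. I would build an explicit transport plan between these two equal-mass measures: the part carried by $\restr{\mu}{\Gamma}$ stays essentially in place, and each $\restr{\mu}{P}$, for $P\in\W_Q$ meeting $3B_Q$ (hence contained in $\lambda\widetilde{B}_Q$), is pushed onto $\Pi_\Gamma(P)$ along $\Pi_\Gamma$; since $\dist(P,\Gamma)\approx\ell(P)$ and $\diam\Pi_\Gamma(P)\lesssim\ell(P)$, this part costs $\lesssim\sum_P\mu(P)\ell(P)^2$. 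The delicate point is that the masses do not match cube by cube -- because $\varphi_Q$ is evaluated on $P$ rather than on $\Pi_\Gamma(P)$, because $g_P\sigma$ spreads the mass $\mu(P)$ uniformly over $\Pi_\Gamma(P)$, and because $\theta\neq1$ -- so a small amount of mass must be rearranged. Using $|\nabla\varphi_Q|\lesssim r(B_Q)^{-1}$, the upper bound $\mu(\lambda\widetilde{B}_Q)\le M\ell(Q)^n$, and the fact that neighbouring Whitney cubes have comparable sidelengths, one can arrange this rearrangement so that each defect attached to $P$ travels only a distance $\lesssim\ell(P)$, and the additional cost is again absorbed into $\sum_P\mu(P)\ell(P)^2$. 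Carrying out this bookkeeping carefully is where the work is concentrated.
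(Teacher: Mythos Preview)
Your overall plan --- compare $\mu$ to $\nu_Q$ via an explicit transport and then change scale --- is close in spirit to the paper's, but the transport estimate $W_2(\varphi_Q\mu,\theta\varphi_Q\nu_Q)^2\lesssim\sum_P\mu(P)\ell(P)^2$ has a genuine gap. The difficulty is precisely the global factor $\theta$. The defect $(1-\theta)\varphi_Q\restr{\mu}{\Gamma}$ is spread over all of $\Gamma\cap 3B_Q$ and has mass $|1-\theta|\int\varphi_Q\,d\restr{\mu}{\Gamma}\approx_M|1-\theta|\,\ell(Q)^n$, while the compensating defects are localized near the individual Whitney cubes; matching them forces transport over distances $\sim\ell(Q)$, not $\sim\ell(P)$. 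Since $|1-\theta|\lesssim_M\ell(Q)^{-n-1}\sum_P\ell(P)\mu(P)$, this part of the cost is $\sim\ell(Q)\sum_P\ell(P)\mu(P)$, which is \emph{not} controlled by $\sum_P\ell(P)^2\mu(P)$: a single Whitney cube $P_0$ of sidelength $\epsilon\ell(Q)$ sitting in the annulus $3B_Q\setminus 2B_Q$ already gives a ratio $\sim\epsilon^{-1}$. Your claim that ``each defect attached to $P$ travels only a distance $\lesssim\ell(P)$'' would require the local defects to cancel among neighbouring Whitney cubes, but nothing forces their signs to alternate; and the $(1-\theta)$--piece on $\Gamma$ is intrinsically nonlocal.

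The paper sidesteps this completely by working at the larger scale first. It defines an auxiliary measure
\[
\mu_Q=\varphi_{\widetilde{B}_Q}\restr{\mu}{\Gamma}+\sum_{P\in I_Q} a_P\restr{\mu}{P},\qquad a_P=\frac{\int\varphi_{\widetilde{B}_Q}g_P\,d\sigma}{\mu(P)},
\]
so that the mass of $a_P\restr{\mu}{P}$ equals that of $\varphi_{\widetilde{B}_Q}g_P\sigma$ \emph{cube by cube}; the transport from $\mu_Q$ to $\varphi_{\widetilde{B}_Q}\nu_Q$ is then the obvious product plan with cost exactly $\lesssim\sum_P\mu(P)\ell(P)^2$, and no global rearrangement is needed. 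The reason for using $\widetilde{B}_Q$ rather than $B_Q$ here is that every Whitney cube $P$ meeting $3B_Q$ satisfies $\Pi_\Gamma(P)\subset\widetilde{B}_Q\subset 2\widetilde{B}_Q$ (this is where $\Lambda$ is fixed), so $\varphi_{\widetilde{B}_Q}\equiv 1$ on $\Pi_\Gamma(P)$, whence $a_P=1$ and $\restr{\mu_Q}{3B_Q}=\restr{\mu}{3B_Q}$. One then descends from $\widetilde{B}_Q$ to $B_Q$ via Lemma~\ref{lem:W_2 est Tolsa} applied with $L=L_{\widetilde{B}_Q}$, and a final triangle inequality (as in Lemma~\ref{lem:alpha_on_balls}) swaps $L_{\widetilde{B}_Q}$ for $L_{B_Q}$ at the cost of the $\alpha_{\sigma,2}(\widetilde{B}_Q)$ term. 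In short: enlarge the ball \emph{before} building the transport, not after.
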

\begin{proof}
	Let $Q\in \T$ with $\ell(Q)\le 2^{-k_0}$. We will define an auxiliary measure $\mu_Q$. 
	Set 
	\begin{equation*} 
	I_Q = \{P\in\W_Q : \Pi_{\Gamma}(P)\cap 3\widetilde {B}_Q \ne \varnothing\}.
	\end{equation*}
	It is easy to check that
	\begin{equation}\label{eq:support of muQ}
	\bigcup_{P\in I_Q}P\subset \lambda\widetilde{B}_Q,
	\end{equation}
	for $\lambda=\lambda(k_0,K,n,d)$ big enough (e.g. $\lambda=C(n,d)K2^{k_0}$ works). It is crucial that all cubes in $I_Q$ have sidelength bounded by $2^{k_0}\ell(Q)$, otherwise no such $\lambda$ would exist.
	
	Recall that the functions $g_P(x) = \frac{\mu(P)}{\ell(P)^n}\one_{\Pi_{\Gamma}(P)}(x),\ P\in \W_Q,$ were used to define $\nu_Q$ at the beginning of Section \ref{sec:approximating measures nuQ}. Let
	\begin{equation*}
	a_P = \frac{\int \varphi_{\widetilde{B}_Q}g_P\ d\sigma}{\mu(P)}.
	\end{equation*}
	Note that for $P\in \W_Q\setminus I_Q$ we have $a_P=0$. The measure $\mu_Q$ is defined as
	\begin{equation*}
	\mu_Q = \varphi_{\widetilde{B}_Q}\restr{\mu}{\Gamma} + \sum_{P\in I_Q} a_P \restr{\mu}{P}.
	\end{equation*}
	
	First, let us show that if $\Lambda$ (the constant from the definition of $\widetilde{B}_Q=\Lambda B_Q$) is big enough, then $\restr{\mu}{3B_Q} = \restr{\mu_Q}{3B_Q}$. We need to check the following: if $P\in\W^{e(Q)}$ is such that $P\cap 3B_{Q}\ne \varnothing$, then $P\in I_Q$ and $a_P=1$.
	
	Note that for all such $P$ we have
	\begin{equation*}
	\ell(P)\le \diam(P)\overset{\eqref{eq:property of Whitney cubes}}{\le} r(3B_Q)= 9\diam(Q) \overset{\eqref{eq:k0 big}}{\le} 2^{-k(Q)},
	\end{equation*}
	and so $P\in \W_Q$. Furthermore, the fact that $P\cap 3B_{Q}\ne \varnothing$ and \eqref{eq:property of Whitney cubes} imply that $P\subset 9B_Q$. Since $\Pi_{\Gamma}$ is $\sqrt{2}$-Lipschitz continuous, and $B_Q$ is centered at $\Gamma$, we get that for $\Lambda$ big enough (e.g. $\Lambda=9\sqrt{2}$)
	\begin{equation}\label{eq:fixing Lambda}
	\Pi_{\Gamma}(P)\subset \Lambda B_Q = \widetilde{B}_Q.
	\end{equation}
	We conclude that $P\in I_Q$ and $a_P=1$, and so, 
	\begin{equation}\label{eq:mu equal to mu_Q}
	\restr{\mu}{3B_Q} = \restr{\mu_Q}{3B_Q}.
	\end{equation}
	
	Set $L = L_{\widetilde{B}_Q}$. We will apply \lemref{lem:W_2 est Tolsa} with $\nu=\mu_Q,\ B_1=B_Q,\ B_2=\lambda\widetilde{B}_Q,\ L=L,$ and $f=\varphi_{\widetilde{B}_Q}$. Notice that $\supp\mu_Q\subset \lambda\widetilde{B}_Q$ by \eqref{eq:support of muQ}. Moreover, using the same trick as in the beginning of the proof of \lemref{lem:alpha_on_balls}, we may assume that $L\cap B_Q\neq\varnothing$. Since $\mu_Q(B_Q)\approx_M\mu_Q(\lambda\widetilde{B}_Q)\approx_M \ell(Q)^n$ by \lemref{lem:T_lemma_general}, and $r(\lambda \widetilde{B}_Q)=\lambda\Lambda r(B_Q)$, the assumptions of \lemref{lem:W_2 est Tolsa} are met, and we get that
	\begin{gather}\label{eq:estimating alpha hat}
	W_2(\varphi_{Q}\mu_Q, a\varphi_{Q}\Hn{L})\lesssim_{M,\lambda,\Lambda} W_2(\mu_Q, a\varphi_{\widetilde{B}_Q}\Hn{L}).
	\end{gather}
	
	%		For brevity of notation set $L = L_{\widetilde{B}_Q}.$ 
	Applying the triangle inequality yields
	\begin{multline}\label{eqn:W_2^estimates_last_lemma}
	W_2(\mu_Q,a\varphi_{\widetilde{B}_Q}\Hn{L})^2
	\lesssim W_2(\mu_Q,\varphi_{\widetilde{B}_Q}\nu_Q)^2 + 
	W_2(\varphi_{\widetilde{B}_Q}\nu_Q,a\varphi_{\widetilde{B}_Q}\Hn{L})^2\\
	\approx_M  W_2(\mu_Q,\varphi_{\widetilde{B}_Q}\nu_Q)^2 + \widehat{\alpha}_{\nu_Q,2}(\widetilde{B}_Q)^2\ell(Q)^{n+2}.
	\end{multline}
	
	To estimate $W_2(\mu_Q,\varphi_{\widetilde{B}_Q}\nu_Q)$ we define the following transport plan:
	\begin{equation*}
	d\pi(x,y) = \varphi_{\widetilde{B}_Q}(x)d\restr{\mu}{\Gamma}(x)d\delta_{x}(y) + \sum_{P\in I_Q}\frac{1}{\mu_Q(P)}d\restr{\mu_Q}{P}(x) \varphi_{\widetilde{B}_Q}(y)g_P(y)d\sigma(y).
	\end{equation*}
	Then,
	\begin{multline*}
	W_2(\mu_Q,\varphi_{\widetilde{B}_Q}\nu_Q)^2\le \int |x-y|^2\ d\pi(x,y) \lesssim \sum_{P\in I_Q}\ell(P)^2\int \varphi_{\widetilde{B}_Q}(y)g_P(y)d\sigma(y).\\
	\le \sum_{P\in I_Q}\mu(P)\ell(P)^2\overset{\eqref{eq:support of muQ}}{\le}\sum_{\substack{P\in \W_Q\\ P\subset \lambda\widetilde{B}_Q}}\mu(P)\ell(P)^2.
	\end{multline*}
	Putting together \eqref{eq:mu equal to mu_Q}, \eqref{eq:estimating alpha hat}, \eqref{eqn:W_2^estimates_last_lemma}, and the estimate above, we get
	\begin{equation*}
	W_2(\varphi_{Q}\mu, a\varphi_{Q}\Hn{L})\lesssim_{M,\lambda,\Lambda} \widehat{\alpha}_{\nu_Q,2}(\widetilde{B}_Q)^2\ell(Q)^{n+2} + \sum_{\substack{P\in \W_Q\\ P\subset \lambda\widetilde{B}_Q}}\mu(P)\ell(P)^2.
	\end{equation*}
	Finally, we use the triangle inequality, the estimate $\mu(3B_Q)\approx_{M} \sigma(B_Q)\approx r(B_Q)^n$, and the fact that $L_Q$ minimizes $\alpha_{\sigma,2}(B_Q)$, to get
	\begin{multline*}
	\widehat{\alpha}_{\mu,2}(B_Q)^2\ell(Q)^{n+2}\approx_M W_2(\varphi_{Q}\mu, a\varphi_{Q}\Hn{L_Q})\leq W_2(\varphi_Q\mu,a\varphi_Q\Hn{L}) \\
	+ \left(\frac{\int \varphi_Q\ d\mu}{\int \varphi_Q\ d\sigma}\right)^{1/2}\left(W_2(\varphi_Q\sigma,a\varphi_Q\Hn{L_Q}) +
	W_2(\varphi_Q\sigma,a\varphi_Q\Hn{L})\right)\\
	\lesssim_M W_2(\varphi_{Q}\mu, a\varphi_{Q}\Hn{L}) + W_2(\varphi_{Q}\sigma, a\varphi_{Q}\Hn{L})\\
	\lesssim W_2(\varphi_{Q}\mu, a\varphi_{Q}\Hn{L}) + \alpha_{\sigma,2}(\widetilde{B}_Q)^2\ell(Q)^{n+2},
	\end{multline*}
	and so the proof is complete.
\end{proof}

We are ready to finish the proof of \lemref{lem:general_with_Gamma}.

\begin{proof}[Proof of \lemref{lem:general_with_Gamma}]
	Recall that $R$ is a $\Gamma$-cube with $\ell(R)=1$, and $\varepsilon>0$ is an arbitrary small constant, and that they were both fixed in Subsection \ref{subsec:notation}. Let $\lambda,\ M,\ \Tree,$ and $\Stop$ be as in \lemref{lem:estimate alpha mu with alpha sigma} and \lemref{lem:T_lemma_general}. Set
	\begin{equation*}
	R' = R\setminus\bigcup_{P\in\Stop}P.
	\end{equation*}
	By \lemref{lem:T_lemma_general}, we have $\mu(R')\ge (1-\varepsilon)\mu(R)$. Our aim is to show that
	\begin{equation*}
	\int_{R'}\int_0^{1}\alpha_{\mu,2}(x,r)^2\ \frac{dr}{r}\ d\mu(x)<\infty.
	\end{equation*}
	
	For any $x\in R'$ we have arbitrarily small cubes from $\Tree$ containing $x$. Hence, for any $k\ge k_0+3,\ r\in (2^{-k},2^{-k+1}],$ we have $3B(x,r)\subset B_Q$ for the cube $Q\in\T$ containing $x$ and satisfying $\ell(Q)=2^{-k+3}$. Thus, by \lemref{lem:alpha_on_balls},
	\begin{equation*}
	\widehat{\alpha}_{\mu,2}(B(x,r))^2\lesssim_M \widehat{\alpha}_{\mu,2}(B_Q)^2 + \alpha_{\sigma,2}(B_Q)^2.
	\end{equation*}
	Integrating both sides with respect to $r$ yields
	\begin{equation*}
	\int_{2^{-k}}^{2^{-k+1}} \widehat{\alpha}_{\mu,2}(B(x,r))^2\ \frac{dr}{r} \lesssim_M \int_{2^{-k}}^{2^{-k+1}}( \widehat{\alpha}_{\mu,2}(B_Q)^2 + \alpha_{\sigma,2}(B_Q)^2)\ \frac{dr}{r} \approx \widehat{\alpha}_{\mu,2}(B_Q)^2 + \alpha_{\sigma,2}(B_Q)^2.
	\end{equation*}
	The inequality above holds for all $x\in Q\cap R'$, so
	\begin{multline*} 
	\int_{Q\cap R'} \int_{2^{-k}}^{2^{-k+1}} \widehat{\alpha}_{\mu,2}(B(x,r))^2\ \frac{dr}{r}\ d\mu(x) \lesssim_M (\widehat{\alpha}_{\mu,2}(B_Q)^2 + \alpha_{\sigma,2}(B_Q)^2)\mu(Q)\\
	\approx_M (\widehat{\alpha}_{\mu,2}(B_Q)^2 + \alpha_{\sigma,2}(B_Q)^2)\ell(Q)^n.
	\end{multline*}
	Summing over all $Q\in\T$ with $\ell(Q)=2^{-k+3}$, and then over all $k\ge k_0+3$, we get
	\begin{equation}\label{eqn:passing to sums}
	\int_{R'}\int_0^{2^{-k_0-2}}\widehat{\alpha}_{\mu,2}(B(x,r))^2\ \frac{dr}{r}\ d\mu(x) \lesssim_M \sum_{\substack{Q\in\T\\ \ell(Q)\le 2^{-k_0}}} \widehat{\alpha}_{\mu,2}(B_Q)^2\ell(Q)^n + 
	\sum_{\substack{Q\in\T\\ \ell(Q)\le 2^{-k_0}}}\alpha_{\sigma,2}(B_Q)^2\ell(Q)^n.
	\end{equation}
	On the other hand, for any $r>0$ we have 
	\begin{equation*}
	\widehat{\alpha}_{\mu,2}(B(x,r))^2\lesssim \frac{\mu(\R^d)}{r^n},
	\end{equation*}
	so
	\begin{equation*}
	\int_{R'}\int_{2^{-k_0-2}}^{1}\widehat{\alpha}_{\mu,2}(B(x,r))^2\ \frac{dr}{r}\ d\mu(x) < \infty.
	\end{equation*}
	Thus, in order to prove \lemref{lem:general_with_Gamma}, it suffices to show that the sums on the right hand side of \eqref{eqn:passing to sums} are finite. 
	
	The finiteness of
	\begin{equation*} 
	\sum_{Q\in\D_{\Gamma},\ Q\subset R}\alpha_{\sigma,2}(B_Q)^2\ell(Q)^n
	\end{equation*}
	follows by \thmref{thm:characterization UR}.
	To estimate the other sum we apply \lemref{lem:estimate alpha mu with alpha sigma}:
	\begin{multline*}
	\sum_{\substack{Q\in\T\\ \ell(Q)\le 2^{-k_0}}} \widehat{\alpha}_{\mu,2}(B_Q)^2\ell(Q)^n \lesssim \sum_{\substack{Q\in\T\\ \ell(Q)\le 2^{-k_0}}}\widehat{\alpha}_{\nu_Q,2}(\widetilde{B}_Q)^2\ell(Q)^n + \sum_{\substack{Q\in\T\\ \ell(Q)\le 2^{-k_0}}}\alpha_{\sigma,2}(\widetilde{B}_Q)^2\ell(Q)^n\\
	 + \sum_{\substack{Q\in\T\\ \ell(Q)\le 2^{-k_0}}}\sum_{\substack{P\in \W_Q\\ P\subset \lambda\tilde{B}_Q}}\mu(P)\frac{\ell(P)^2}{\ell(Q)^2}.
	\end{multline*}
	The first sum is finite by \lemref{lem:T_lemma_general}, the second by \thmref{thm:characterization UR}. Concerning the last sum, we may estimate it in the following way:
	\begin{multline*}
	\sum_{\substack{Q\in\T\\ \ell(Q)\le 2^{-k_0}}}\sum_{\substack{P\in \W_Q\\ P\subset \lambda\tilde{B}_Q}}\mu(P)\frac{\ell(P)^2}{\ell(Q)^2} \lesssim \sum_{e\in\{0,1\}^n}\sum_{\substack{P\in \W^e\\ P\subset \lambda\tilde{B}_R}} \mu(P) \sum_{\substack{Q\in\T\\ \lambda \tilde{B}_{Q}\supset P}} \frac{\ell(P)^2}{\ell(Q)^2}\\
	\lesssim \sum_{e\in\{0,1\}^n}\sum_{\substack{P\in \W^e\\ P\subset \lambda\tilde{B}_R}} \mu(P) \le \sum_{e\in\{0,1\}^n} \mu(\lambda \widetilde{B}_R) = 2^n \mu(\lambda \widetilde{B}_R)<\infty.
	\end{multline*}
	Thus,
	\begin{equation*}
	\sum_{Q\in\T} \widehat{\alpha}_{\mu,2}(B_Q)^2\ell(Q)^n<\infty.
	\end{equation*}
\end{proof}

\end{document}